\newtheorem{theorem}{Theorem}
\newtheorem{algorithm}[theorem]{Algorithm}
\newtheorem{corollary}[theorem]{Corollary}
\newtheorem{definition}[theorem]{Definition}
\newtheorem{lemma}[theorem]{Lemma}
\newtheorem{proposition}[theorem]{Proposition}
\newenvironment{proof}[1][Proof]{\noindent\textbf{#1.} }{\ \rule{0.5em}{0.5em}}
\begin{document}

\title{\textsl{Construction of Nilpotent Jordan Algebras Over any Arbitrary
Fields}}
\author{A.Hegazi , H.Abdelwahab \\
Mathematics department, Faculty of Science, Mansoura University, Egypt\\
hegazi@mans.edu.eg\\
hanii\_i@yahoo.com}
\date{}
\maketitle

\begin{abstract}
A cohomological approach for classifying nilpotent Jordan algebras, every
nilpotent Jordan algebras can be constructed by the second cohomolgy of
nilpotent Jordan algebras of low dimension. Also we describe the analogue of 
\textit{Skjelbred-Sund method} for classifying nilpotent lie algebras to
classify nilpotent Jordan algebras and carry out a procudure for
constructing nilpotent Jordan algebras over arbitrary fields. By using this
method, we construct all non-isomorphic nilpotent Jordan algebras up to
dimension three over any field, and all non-isomorphic nilpotent Jordan
algebras of dimension four over an algebraic closed field of characteristic $%
\neq 2$ and over the real field $%
\mathbb{R}
$. Also commutative nilpotent associative algebras are classified, since any
commutative nilpotent associative algebras are nilpotent Jordan algebras. We
show that there are up to isomorphism $13$ nilpotent Jordan algebras of
dimension $4$ over an algebraic closed field of characteristic $\neq 2,$ and 
$4$ of those are not associative, yielding $9$\ commutative nilpotent
associative algebras. Also up to isomorphism there are $17$ nilpotent Jordan
algebras of dimension $4$ over the real field $%
\mathbb{R}
$, and $5$ of those are not associative, yielding $12$\ commutative
nilpotent associative algebras.
\end{abstract}

Keywords : Jordan algebras, Nilpotent, Centeral extension, Cohomology,
Isomorphism, Automorphism group.

2010 Mathematics Subject Classification : 17C10, 17C55, 17-08.

\section{Introduction}

Jordan algebras were introduced in the early 1930's by a physicist, P.
Jordan, in an attempt to generalize the formalism of quantum mechanics.
Little appears to have resulted in this direction, but un-anticipated
relationships between these algebras and Lie groups and the foundations of
geometry have been discovered.

The structure of this paper, in section $2$ we define the notion of
Extension of Jordan algebra which is similar to the concept of Extension of
Groups, let $N,F$ and $G$ be groups and let $G$ have a normal subgroup $\bar{%
N}$ which is isomorphic to $N$, that is, $\bar{N}$ $\cong N$. Recall that a
subgroup is called normal or invariant if one has $g\bar{N}g^{-1}=\bar{N}$
for all $g$ $\in $ $G$. (At the Jordan algebras level a normal subgroup
yields an ideal.) The group $G$ is called an extension of $F$ by $N$ if the
factor group $G/N$ is isomorphic with the group $F$, i.e. $G/\bar{N}$ $\cong
F$. The relationship between the groups $N,F$ and $G$ can be represented by
the sequence%
\begin{equation*}
N\overset{\Gamma }{\longrightarrow }G\overset{\Lambda }{\longrightarrow }F
\end{equation*}%
where $\Gamma $ is an injective group homomorphism with im$\Gamma =\bar{N}$
and where $\Lambda $ is a surjective group homomorphism with ker$\Lambda =%
\bar{N}$. A systematic study of group extensions has been performed by the
German mathematician O. Schreier in $1926$.

In section $3,$ we focus on the Centeral Extension and in sections $4,5$ and 
$6$ we descripe the analouge of of \textit{The Skjelbred-Sund Method, }and
carry out a procudure for constructing nilpotent Jordan algebras over any
arbitrary field. Finally, classify nilpotent Jordan algebras of dimension $%
\leq 4.$

\begin{definition}
A Jordan algebra $J$ is a vector space over a field $K$ equipped with a
symmetric bilinear map $B:J\times J\longrightarrow J$ such that :%
\begin{equation*}
B(B(x,x),B(x,y)=B(x,B(B(x,x),y))\text{ },\text{all }x,y\in J.
\end{equation*}%
which is called the Jordan identity.
\end{definition}

Let $J$ be a Jordan algebra over a field $K.$For any integer $m\in 
\mathbb{N}
$ we define a lower centeral series as the descending chain of ideals%
\begin{equation*}
c^{1}(J)=J\supseteq c^{2}(J)=B(J,J)\supseteq c^{3}(J)=B(c^{2}(J),J)\supseteq
.....\supseteq c^{m}(J)=B(c^{m-1}(J),J)
\end{equation*}

\begin{definition}
A Jordan algebra $J$ over a field $K$ is said to be nilpotent of nilindex $n$
if :%
\begin{equation*}
c^{n}(J)=\{0\}\text{ \ \ and \ \ }c^{n-1}(J)\neq \{0\}.
\end{equation*}%
Also we call $n$ the nilpotency class of $J$ and $J$ is called $n$-step
nilpotent.
\end{definition}

\begin{definition}
Let $J$ be a Jordan algebra over a field $K$ then the ideal : 
\begin{equation*}
Z(J)=\{x\in J:B(x,y)=0\ \forall x\in J\}
\end{equation*}%
is called the centre of $J.$
\end{definition}

So, any nilpotent Jordan algebra have non-trivial centre $Z(J)\neq \{0\}.$In
the Jordan algebra $J,$ we will denote the product $B(x,y)$ by $x\circ y$
and $x^{2}$ for $B(x,x).$Then the jordan identity is written as follow :%
\begin{equation*}
x^{2}\circ (x\circ y)=x\circ (x^{2}\circ y)\text{ \ , all }x,y\in J.
\end{equation*}%
\textsl{\ }

\section{Extensions of Jordan algebras}

In this section we are going to focus on extensions of Jordan algebras.
Loosely speaking an extension of a Jordan algebra $J$ is an enlargement of $%
J $ by some other Jordan algebra. To be somewhat more concrete we consider
the following construction. Starting with two Jordan algebras $L$ and $M$
over the same field $K$ we consider the Cartesian product $J=L\times M$.
Elements of this set are ordered pairs $(l,m)$ with $l\in L$ and $m\in M$.
Defining addition of such pairs by $(l,m)+(l^{^{\prime }},m^{^{\prime }}):=$ 
$(l+l^{^{\prime }},m+m^{^{\prime }})$ and multiplication by scalars $\alpha
\in K$ as $\alpha (l,m):=$ $(\alpha l,\alpha m)$ the set $J$ \ becomes a
vector space which will also be denoted by $J$. Using the Jordan
multiplication on $L$ and $M$ we define on $J$ \ the multiplication%
\begin{equation*}
(l,m)\circ (l^{^{\prime }},m^{^{\prime }})=(l\circ _{L}l^{^{\prime }},m\circ
_{M}m^{^{\prime }})
\end{equation*}%
One easily sees that this is a Jordan multipliction on $J$. Defining next
the map $\epsilon $ $:m\in M\longrightarrow $ $(0,m)$ $\in J$ and the map $%
\lambda :$ $(l,m)\in J\longrightarrow $ $m\in M$ one readily verifies that $%
\epsilon $ is an injective Jordan algebra homomorphism, while $\lambda $ is
a surjective Jordan algebra homomorphism. Moreover, im $\epsilon $ $=$ ker $%
\lambda $. The Jordan algebra $J$ with these properties is called a trivial
extension of of $L$ by $M$ ( or of $M$ by $L$ ) .

Instead of denoting the elements of $L$ by ordered pairs we will frequently
use the notation $(l,m)=l+m$ . The vector space $J$ is then written as%
\begin{equation*}
J=L\oplus M
\end{equation*}%
and the Jordan multiplication is in this notation given by%
\begin{equation*}
(l+m)\circ (l^{^{\prime }}+m^{^{\prime }})=l\circ _{L}l^{^{\prime }}+m\circ
_{M}m^{^{\prime }}
\end{equation*}%
This example of a trivial extension is generalized in the following concept.
Let $L,J$ and $M$ be Jordan algebras and let these algebras be related in
the following way.

\begin{itemize}
\item There exists an injective Jordan algebra homomorphism%
\begin{equation*}
\epsilon :L\longrightarrow J\text{ .}
\end{equation*}

\item There exists an surjective Jordan algebra homomorphism%
\begin{equation*}
\lambda :J\longrightarrow M\text{ .}
\end{equation*}

\item The Jordan algebra homomorphisms $\epsilon $ and $\lambda $ are
related by%
\begin{equation*}
\text{im }\epsilon =\text{ker }\lambda \text{ }.
\end{equation*}
\end{itemize}

Then the Jordan algebra $J$ \ is called an extension of $M$ by $L$. The
relationship is summarized by the sequence%
\begin{equation*}
L\overset{\epsilon }{\longrightarrow }J\overset{\lambda }{\longrightarrow }M
\end{equation*}%
We elaborate a little on this concept. Since $\lambda $ is a jordan algebra
homomorphism one obtains that ker $\lambda $ is an ideal in $J$ and since $%
\lambda $ is surjective First Isomorphism Theorem entails a Jordan algebra
isomorphism between the quotient algebra $J/$ker $\lambda $ and $L$ ,%
\begin{equation*}
J/\text{ker }\lambda \cong M
\end{equation*}%
Using im $\epsilon =$ ker $\lambda $ this relation can be written as%
\begin{equation*}
J/\text{im }\epsilon \cong M
\end{equation*}%
Since $\epsilon $ is injective the Jordan algebras $L$ and im $\epsilon $
are Jordan isomorphic, i.e $J/L\cong M$. From these properties one sees that
it makes sense to call the Jordan algebra $J$ is an extension of $M$ by $L$.

\begin{definition}
Let $L$, $J$ and $M$ be Jordan algebras over the field $K$. Let 
\begin{equation*}
\lambda :J\longrightarrow M
\end{equation*}
be a surjective Jordan algebra homomorphism and 
\begin{equation*}
\epsilon :L\longrightarrow J
\end{equation*}%
an injective Jordan algebra homomorphism. Then the sequence%
\begin{equation}
L\overset{\epsilon }{\longrightarrow }J\overset{\lambda }{\longrightarrow }M
\label{h}
\end{equation}%
is called an \textbf{extension }of $M$ by $L$ if $\epsilon $ maps $L$ onto
the kernel ker $\lambda $ $\subset $ $J$ \ of the map $\lambda $. That is if%
\begin{equation*}
\text{im }\epsilon =\text{ker }\lambda \text{ }.
\end{equation*}%
The kernel ker $\lambda $ is called the \textbf{kernel of the extension}.
\end{definition}

Loosely speaking $J$ \ itself, instead of the sequence (\ref{h}), will be
called an extension of $M$ by $L$.

It may happen that there exist several extensions of $M$ by $L$. To classify
extensions we define the notion of \textbf{equivalent extensions}.

\begin{definition}
Two sequences%
\begin{equation*}
L\overset{\epsilon }{\longrightarrow }J\overset{\lambda }{\longrightarrow }M
\end{equation*}%
and%
\begin{equation*}
L\overset{\epsilon ^{^{\prime }}}{\longrightarrow }J^{^{\prime }}\overset{%
\lambda ^{^{\prime }}}{\longrightarrow }M
\end{equation*}%
are called \textbf{equivalent extensions} if there exists a Jordan algebra
isomorphism $\phi :J\longrightarrow J^{^{\prime }}$ such that%
\begin{equation*}
\phi \circ \epsilon =\epsilon ^{^{\prime }},\text{ \ }\lambda ^{^{\prime
}}\circ \phi =\lambda \text{ .}
\end{equation*}
\end{definition}

One easily sees that\textbf{\ equivalence of extensions} is an equivalence
relation.

The concept of a Jordan algebra extension can be formulated more succinctly
using the concept of an exact sequence.Let $\{J_{i}\}$ be a sequence of
Jordan algebras and $\{\phi _{i}\}$ a sequence of Jordan algebra
homomorphisms $\phi _{i}:$ $J_{i}\longrightarrow J_{i+1}$, then the sequence%
\begin{equation*}
.....\longrightarrow J_{i-1}\overset{\phi _{i-1}}{\longrightarrow }J_{i}%
\overset{\phi _{i}}{\longrightarrow }J_{i+1}\overset{\phi _{i+1}}{%
\longrightarrow }.....
\end{equation*}%
is called \textbf{exact} if one has for each $i$%
\begin{equation*}
\text{im }\phi _{i-1}=\text{ker }\phi _{i}\text{ .}
\end{equation*}%
Let $L,J$ and $M$ be Jordan algebras and let us denote by $0$ the Jordan
algebra consisting of the zero element only. Then the sequence $0\overset{f}{%
\longrightarrow }L\overset{g}{\longrightarrow }J$ is exact if and only if $g$
is an injective Jordan algebra homomorphism. Indeed, since im $f=0$ $\in L$
the requirement ker $g$ $=$ im $f$ \ forces $g$ to be injective. One usually
omits the mapping $f$ i.e., one writes $0\longrightarrow L\overset{g}{%
\longrightarrow }J$ . Likewise the sequence $J\overset{f}{\longrightarrow }%
M\longrightarrow 0$ is exact if and only if $f$ \ is a surjective Jordan
algebra homomorphism. Consequently we have the following proposition.

\begin{proposition}
The sequence $L\longrightarrow J\longrightarrow M$ of Jordan algebras is an 
\textbf{extension} of $M$ by $L$ if and only if the sequence%
\begin{equation*}
0\longrightarrow L\overset{\epsilon }{\longrightarrow }J\overset{\lambda }{%
\longrightarrow }M\longrightarrow 0
\end{equation*}%
is \textbf{exact .}
\end{proposition}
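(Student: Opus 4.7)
The plan is essentially to unpack both sides of the biconditional against the definition of exactness and Definition~4 of an extension, since the statement is a direct reformulation in the language of exact sequences rather than a substantive theorem.

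First I would fix notation: exactness of the four-term sequence $0\longrightarrow L\overset{\epsilon}{\longrightarrow}J\overset{\lambda}{\longrightarrow}M\longrightarrow 0$ requires exactness at the three interior positions $L$, $J$, and $M$. I would then address each position in turn. Exactness at $L$ says $\ker\epsilon=\operatorname{im}(0\to L)=\{0\}$, which is equivalent to $\epsilon$ being an injective Jordan algebra homomorphism. Exactness at $M$ says $\operatorname{im}\lambda=\ker(M\to 0)=M$, which is equivalent to $\lambda$ being surjective. Exactness at $J$ is, by definition, the condition $\operatorname{im}\epsilon=\ker\lambda$.

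Next I would compare these three conditions with the three bullet-point requirements in Definition~4 of an extension of $M$ by $L$: an injective homomorphism $\epsilon:L\to J$, a surjective homomorphism $\lambda:J\to M$, and the compatibility $\operatorname{im}\epsilon=\ker\lambda$. Since these match the three exactness conditions above one-for-one, each direction of the biconditional is immediate. For the forward direction, given that $L\overset{\epsilon}{\to}J\overset{\lambda}{\to}M$ is an extension, one attaches the zero algebras on either end and the above observations make the resulting sequence exact; for the reverse direction, exactness of the extended sequence is used to recover injectivity of $\epsilon$, surjectivity of $\lambda$, and the image–kernel equality, yielding precisely the data of an extension.

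The main obstacle is essentially cosmetic rather than mathematical: there is no hard content here, only a careful translation between two equivalent vocabularies. The only thing to be slightly careful about is making explicit that the phrase ``$0$'' stands for the trivial Jordan algebra (as already introduced in the excerpt) and that the unique homomorphisms $0\to L$ and $M\to 0$ have image $\{0\}$ and kernel $M$ respectively, so that the exactness conditions at the two endpoints correctly encode injectivity and surjectivity. Once this is stated, the proof concludes in a couple of lines.
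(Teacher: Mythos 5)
Your proof is correct and follows exactly the route the paper takes: the paper's justification (given in the discussion immediately preceding the proposition) likewise identifies exactness at $L$ with injectivity of $\epsilon$, exactness at $M$ with surjectivity of $\lambda$, and exactness at $J$ with $\operatorname{im}\epsilon=\ker\lambda$, then matches these against the definition of an extension. Your added care about the maps $0\to L$ and $M\to 0$ is the same point the paper makes when it notes that $\operatorname{im}f=0$ forces $g$ to be injective.
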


Next we introduce some particular types of extensions. For this we need the
following concept from linear algebra. Two subspaces $V_{1}$ and $V_{2}$ of
a vector space $V$ are called complementary if $V$ is the direct sum of $%
V_{1}$ and $V_{2}$.

\begin{definition}
An extension%
\begin{equation*}
L\overset{\epsilon }{\longrightarrow }J\overset{\lambda }{\longrightarrow }M
\end{equation*}

is called:

\textbf{trivial} if there exists an ideal $I$ $\subset J$ complementary to
ker $\lambda $, i.e.%
\begin{equation*}
J=ker\text{ }\lambda \oplus I\text{ \ \ \ \ \ }(\text{ Jordan algebra direct
sum }),
\end{equation*}

\textbf{split }if there exists a Jordan subalgebra $S\subset J$
complementary to ker $\lambda $,

i.e.%
\begin{equation*}
J=ker\text{ }\lambda \oplus S\text{\ \ \ \ \ }(\text{ vector space direct
sum }),
\end{equation*}

\textbf{centeral }if the kernel ker $\lambda $ is contained in the center $%
Z(J)$ of $J$, i.e.%
\begin{equation*}
ker\text{ }\lambda \subset Z(J)\text{ .}
\end{equation*}
\end{definition}

\section{2-Cocycles on Jordan algebra}

In this section we take a closer look at central extensions of Jordan
algebras. The Jordan algebras in this section will be over the field $K$ .
Recall that the sequence of Jordan algebras%
\begin{equation*}
L\overset{\epsilon }{\longrightarrow }J\overset{\lambda }{\longrightarrow }M
\end{equation*}%
is a central extension (of $M$ by $L$) if one has for the Jordan algebra
homomorphisms $\epsilon $ and $\lambda $ the following properties: $\epsilon 
$\ is injective, $\lambda $ is surjective, im $\epsilon =$ ker $\lambda $
and the kernel ker $\lambda $ is contained in the center $Z(J)$ of $J$.
Hence im $\epsilon $ = ker $\lambda $ is a Jordan algebra with trivial
multiplicatin. Since $Z(J)$ is a Jordan subalgebra with trivial
multiplicatin and since $L$ and im $\epsilon $\ are isomorphic, $L$ is a
Jordan algebra with trivial multiplicatin too. A central extension of a
Jordan algebra $M$ by a Jordan algebra with trivial multiplicatin( a vector
space ) $L$ can be obtained with the help of a so called $2$-cocycle on $M$.

\begin{definition}
\label{hh}Let $J$ be a Jordan algebra and $V$ be avector space over $K$. A
bilinear map%
\begin{equation*}
\theta :J\times J\longrightarrow V
\end{equation*}%
is called a Jordan $2$-cocycle from $J$ to $V$\ if it satisfies for all $%
x,y\in J$ the following conditions :
\end{definition}

\begin{itemize}
\item $\theta (x,y)=\theta (y,x)$ \ \ \ \ ($\theta $ is symmetric ) .

\item $\theta (x^{2},x\circ y)=\theta (x,x^{2}\circ y)$ \ \ \ \ ( Jordan
identity for $2$-cocycles ) .
\end{itemize}

The set of $2$-cocycles from $J$ to $V$ is denoted by $Z^{2}(J,V)$ . One
easily sees that $Z^{2}(J,V)$ is a vector space if one defines the vector
space operations as follows. Let $\theta _{1}$ and $\theta _{2}$ be $2$%
-cocycles rom $J$ to $V$, then their linear combination $\lambda _{1}$ $%
\theta _{1}+$ $\lambda _{2}$ $\theta _{2}$ ( $\lambda _{1},\lambda _{2}\in K$
) is defined by%
\begin{equation*}
(\lambda _{1}\theta _{1}+\lambda _{2}\theta _{2})(x,y):=\lambda _{1}\theta
_{1}(x,y)+\lambda _{2}\theta _{2}(x,y)\text{ .}
\end{equation*}%
This linear combination is again a Jordan $2$-cocycle.

Let $\theta \in $ $Z^{2}(J,V)$ ,and set $J_{\theta }=J\oplus V.$ Using
Jordan multiplication on $J$ and the cocycle $\theta $ from $J$ to $V$ e
define on $J_{\theta }$ the multiplication 
\begin{equation*}
(x+v)\circ (y+w)=x\circ _{J}y+\theta (x,y)
\end{equation*}%
For $x,y\in J,v,w\in V$ .

\begin{lemma}
\ $J_{\theta }$ is a Jordan algebra if and only if $\ \theta \in $ $%
Z^{2}(J,V)$ .
\end{lemma}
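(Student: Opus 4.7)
The plan is to verify the two defining conditions of a Jordan algebra (symmetry of the product, and the Jordan identity) for the multiplication on $J_\theta$ by direct calculation, and to observe that each condition translates verbatim into one of the two defining conditions of Definition~\ref{hh}. The linearity/bilinearity of the multiplication on $J_\theta = J \oplus V$ is automatic from the bilinearity of $\circ_J$ and of $\theta$, so the only substantive content is commutativity and the Jordan identity.

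First I would handle commutativity. Writing $a=x+v$, $b=y+w$, one has $a\circ b = x\circ_J y + \theta(x,y)$ while $b\circ a = y\circ_J x + \theta(y,x)$. Since $\circ_J$ is already symmetric, $a\circ b = b\circ a$ for all $a,b \in J_\theta$ if and only if $\theta(x,y)=\theta(y,x)$ for all $x,y\in J$, which is the first cocycle axiom.

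Next I would compute the Jordan identity. For $a=x+v\in J_\theta$, note that $a^2 = x^2 + \theta(x,x)$. Then, using that the $V$-component is absorbed by $\theta$ when we re-multiply (because $\theta$ depends only on the $J$-parts), I compute
\begin{equation*}
a^{2}\circ (a\circ b) = x^{2}\circ_J(x\circ_J y) + \theta\bigl(x^{2},\,x\circ_J y\bigr),
\end{equation*}
\begin{equation*}
a\circ (a^{2}\circ b) = x\circ_J(x^{2}\circ_J y) + \theta\bigl(x,\,x^{2}\circ_J y\bigr).
\end{equation*}
The Jordan identity already holds in $J$, so the two $J$-components coincide. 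Therefore the Jordan identity in $J_\theta$ reduces exactly to $\theta(x^{2},x\circ_J y) = \theta(x,x^{2}\circ_J y)$, which is the second cocycle axiom of Definition~\ref{hh}.

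Combining the two equivalences yields the lemma in both directions. The only mild obstacle is bookkeeping: making sure that the $V$-components contribute nothing in the iterated products (they do not, because $\theta$ has been defined only on pairs from $J$ and the product $(x+v)\circ(y+w)$ never feeds $v$ or $w$ back into $\theta$). Once this is noted, the verification is entirely mechanical.
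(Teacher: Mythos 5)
Your proposal is correct and follows essentially the same route as the paper: expand $a^{2}\circ(a\circ b)$ and $a\circ(a^{2}\circ b)$ for $a=x+v$, $b=y+w$, observe that the $J$-components agree by the Jordan identity in $J$, and conclude that the identity in $J_{\theta}$ holds precisely when $\theta(x^{2},x\circ y)=\theta(x,x^{2}\circ y)$. Your explicit check that commutativity of the product on $J_{\theta}$ corresponds to the symmetry axiom for $\theta$ is a small addition the paper leaves implicit, but it does not change the argument.
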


\begin{proof}
For $x+v,y+w\in J_{\theta },$we have that :%
\begin{eqnarray*}
(x+v)^{2}\circ ((x+v)\circ (y+w)) &=&(x\circ _{J}x+\theta (x,x))\circ
(x\circ _{J}y+\theta (x,y)) \\
&=&x^{2}\circ _{J}(x\circ _{J}y)+\theta (x^{2},x\circ _{J}y) \\
(x+v)\circ ((x+v)^{2}\circ (y+w)) &=&(x+v)\circ ((x^{2}+\theta (x,x))\circ
(y+w)) \\
&=&(x+v)\circ (x^{2}\circ _{J}y+\theta (x\circ _{J}x,y)) \\
&=&x\circ _{J}(x^{2}\circ _{J}y)+\theta (x,x^{2}\circ _{J}y)
\end{eqnarray*}%
From Jordan identity it follows that $(x+v)^{2}\circ ((x+v)\circ
(y+w))=(x+v)\circ ((x+v)^{2}\circ (y+w)$ if and only if $\theta
(x^{2},x\circ y)=\theta (x,x^{2}\circ y)$ then $J_{\theta }$ is a Jordan
algebra if and only if $\theta \in $ $Z^{2}(J,V)$ .
\end{proof}

\begin{lemma}
Let $\theta \in $ $Z^{2}(J,V)$ then $J_{\theta }$ is a centeral extension of 
$J$ by $V$ .
\end{lemma}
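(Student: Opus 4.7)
The plan is to exhibit the natural maps
\[
\epsilon:V\longrightarrow J_{\theta},\quad v\mapsto 0+v,\qquad \lambda:J_{\theta}\longrightarrow J,\quad x+v\mapsto x,
\]
viewing $V$ as a Jordan algebra equipped with the trivial product, and then verify the four conditions in the definition of a central extension: $\epsilon$ is an injective Jordan homomorphism, $\lambda$ is a surjective Jordan homomorphism, $\operatorname{im}\epsilon=\ker\lambda$, and $\ker\lambda\subseteq Z(J_{\theta})$.

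First I would check the homomorphism properties. For $\epsilon$, bilinearity of $\theta$ gives $\theta(0,0)=0$, so $(0+v)\circ(0+w)=0\circ_{J}0+\theta(0,0)=0$, matching the trivial product on $V$; injectivity is immediate from the direct sum decomposition $J_{\theta}=J\oplus V$. For $\lambda$, the definition of the product in $J_{\theta}$ yields
\[
\lambda\bigl((x+v)\circ(y+w)\bigr)=\lambda\bigl(x\circ_{J}y+\theta(x,y)\bigr)=x\circ_{J}y=\lambda(x+v)\circ_{J}\lambda(y+w),
\]
and surjectivity is obvious from the construction.

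Next I would verify $\operatorname{im}\epsilon=\ker\lambda$, which is immediate since both equal $\{0+v:v\in V\}\subset J_{\theta}$. Finally, for the central condition, pick any $0+v\in\ker\lambda$ and any $y+w\in J_{\theta}$; bilinearity of $\theta$ gives $\theta(0,y)=0$, hence $(0+v)\circ(y+w)=0\circ_{J}y+\theta(0,y)=0$, so $0+v\in Z(J_{\theta})$ and $\ker\lambda\subseteq Z(J_{\theta})$.

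There is no substantive obstacle in this argument: the hard work has already been done in the preceding lemma, which ensures that $J_{\theta}$ is genuinely a Jordan algebra. The present statement is a direct verification using only the definition of the multiplication on $J_{\theta}$ and the bilinearity of $\theta$; the only conceptual point worth flagging is that $V$ must be regarded as a Jordan algebra with zero product, which is forced by the central-extension requirement that $\ker\lambda$ sit inside $Z(J_{\theta})$.
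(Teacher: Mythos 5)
Your proof is correct and follows the same route as the paper: both invoke the preceding lemma to know $J_{\theta}$ is a Jordan algebra, then use the injection $V\to J\oplus V$ and projection $J\oplus V\to J$ to exhibit the exact sequence and observe that $\ker\lambda=V$ lies in $Z(J_{\theta})$ because the $V$-components never enter the product. You simply spell out the verifications that the paper leaves implicit.
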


\begin{proof}
From the previous lemma \ $J_{\theta }$ is a Jordan algebra and $V$ is a
Jordan algebra with the trivial multiplication . The sequence $V\overset{i}{%
\longrightarrow }J\oplus V\overset{\pi }{\longrightarrow }J$ \ is exact( $i$
be the injection map and $\pi $ be the projection map ) and $V=$ker $\pi
\subset Z(J_{\theta })$ . Hence $J_{\theta }$ is a centeral extension of $J$
by $V$ .
\end{proof}

Hence, a Jordan $2$-cocycle $\theta $\ from $J$ to $V$ ( dim$V=k$ ) allows
for the construction of a $k$-dimensional central extension $J_{\theta
}=J\oplus V$ where the Jordan multiplication on $J_{\theta }$ is given by%
\begin{equation*}
(x+v)\circ (y+w)=x\circ _{J}y+\theta (x,y)
\end{equation*}%
For $x,y\in J,v,w\in V$ .

Now we will show that a Jordan $2$-cocycles can be obtained from
the"abstract form" of a central extension. Considering the extension%
\begin{equation*}
L\overset{\epsilon }{\longrightarrow }J\overset{\lambda }{\longrightarrow }M
\end{equation*}%
with im $\epsilon =$ker $\lambda \subset Z(J)$ we show that one can find $2$%
-cocycles from $M$ to $L$. Consider a linear map $s:M\longrightarrow J$
satisfying%
\begin{equation*}
\lambda \circ s=id_{M}
\end{equation*}%
A map with this property is called a section of $J$. With the help of a
section one can define a bilinear map $\theta ^{^{\prime }}:M\times
M\longrightarrow J$ by taking for all $x,y\in M$%
\begin{equation}
\theta ^{^{\prime }}(x,y)=s(x\circ y)-s(x)\circ s(y)  \label{hhh}
\end{equation}%
Notice that $\theta ^{^{\prime }}$ is identically zero if $s$ is a Jordan
algebra homomorphism. Notice also that $\theta ^{^{\prime }}$ is symmetric.
From equation $\lambda \circ s=id_{M}$ and fact that $\lambda $ is a Jordan
algebra homomorphism one sees that%
\begin{equation*}
\lambda (\theta ^{^{\prime }}(x,y))=0
\end{equation*}%
Hence, we have for all $x,y$ $\in M$%
\begin{equation}
\theta ^{^{\prime }}(x,y)\in \text{ker }\lambda \subset Z(J)\text{ .}
\label{hhhh}
\end{equation}%
Using the injectivity of the map $\epsilon :L\longrightarrow J$ to define
the map%
\begin{equation*}
\theta :M\times M\longrightarrow J\text{ .}
\end{equation*}%
Given by%
\begin{equation*}
\theta :=\epsilon ^{-1}\circ \theta ^{^{\prime }}.
\end{equation*}%
$\theta $ is bilinear and symmetric . It remains to show that $\theta $
satisfy the second condition in Defenition \ref{hh} . Let $x,y\in M$ then by
using Jordan identity, equation (\ref{hhh}) and property (\ref{hhhh}) we
have 
\begin{eqnarray*}
\theta ^{^{\prime }}(x^{2},x\circ y) &=&s(x^{2}\circ (x\circ
y))-s(x^{2})\circ s(x\circ y) \\
&=&s(x\circ (x^{2}\circ y))-s(x^{2})\circ (\theta ^{^{\prime
}}(x,y)+s(x)\circ s(y)) \\
&=&s(x\circ (x^{2}\circ y))-s(x^{2})\circ (s(x)\circ s(y)) \\
&=&s(x\circ (x^{2}\circ y))-s(x)\circ (s(x^{2})\circ s(y)) \\
&=&s(x\circ (x^{2}\circ y))-s(x)\circ (\theta ^{^{\prime
}}(x^{2},y)+s(x^{2}\circ y)) \\
&=&s(x\circ (x^{2}\circ y))--s(x)\circ s(x^{2}\circ y) \\
&=&\theta ^{^{\prime }}(x,x^{2}\circ y)\text{ .}
\end{eqnarray*}%
Hence, $\theta (x^{2},x\circ y)=\epsilon ^{-1}(\theta ^{^{\prime
}}(x^{2},x\circ y))=\epsilon ^{-1}(\theta ^{^{\prime }}(x,x^{2}\circ
y))=\theta (x,x^{2}\circ y)$ . Then $\theta $ is a $2$-cocycles from $M$ to $%
L$. We have thus shown that $2$-cocycles are quite natural objects in
central extensions.

\begin{corollary}
$J_{\theta }$ is a centeral extension of $J$ by $V$ if and only if $\ \theta
\in $ $Z^{2}(J,V)$.
\end{corollary}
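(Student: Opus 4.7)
The plan is to observe that this corollary is simply a packaging of the two lemmas just proved, together with the observation that being a central extension presupposes being a Jordan algebra. I would split the proof into the two implications of the biconditional.

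For the direction $(\Leftarrow)$, assuming $\theta \in Z^{2}(J,V)$, I would simply invoke the previous lemma (the one asserting that $\theta \in Z^{2}(J,V)$ implies $J_{\theta}$ is a central extension of $J$ by $V$). No new work is required, since the construction $J_{\theta}=J\oplus V$ with multiplication $(x+v)\circ(y+w)=x\circ_{J}y+\theta(x,y)$ is exactly the object that lemma analyses, and it exhibits the exact sequence $V\overset{i}{\longrightarrow}J_{\theta}\overset{\pi}{\longrightarrow}J$ with $V=\ker\pi\subset Z(J_{\theta})$.

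For the direction $(\Rightarrow)$, I would argue by reduction to the first lemma. If $J_{\theta}$ is a central extension of $J$ by $V$, then by Definition~8 (extensions) $J_{\theta}$ is in particular a Jordan algebra under the prescribed multiplication. The first lemma of this section states exactly that $J_{\theta}$ is a Jordan algebra if and only if $\theta \in Z^{2}(J,V)$, so the cocycle condition follows immediately.

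The only subtlety (and really the only thing worth emphasising when writing this up) is making explicit that the notation $J_{\theta}$ in the corollary refers to the vector space $J\oplus V$ equipped with the prescribed multiplication built from $\theta$, so that the word ``extension'' tacitly includes the requirement that this multiplication satisfy the Jordan identity. Once that is made clear, the proof collapses to a one-line citation of each of the preceding two lemmas, one per direction; there is no real obstacle, only the bookkeeping of invoking the correct lemma on the correct side of the equivalence.
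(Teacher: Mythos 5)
Your proof is correct and is essentially the argument the paper intends: the corollary is stated there without proof as a summary of the two preceding lemmas, and your decomposition --- ($\Leftarrow$) from the lemma that a cocycle yields a central extension, ($\Rightarrow$) from the observation that being a central extension presupposes that $J_{\theta}$ is a Jordan algebra, which by the first lemma forces $\theta\in Z^{2}(J,V)$ --- is exactly the intended justification.
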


We now proceed with the properties of cocycles. A special type of $2$%
-cocycles is given by so-called $2$-coboundaries.

\begin{definition}
Let $J$ be a Jordan algebra and $V$ be avector space over $K$. Then a linear
map%
\begin{equation*}
f:J\longrightarrow V.
\end{equation*}%
is called a $1$-cochain from $J$ to $V$. The set of all $1$-cochains from $J$
to $V$ is denoted by $C^{1}(J,V).$
\end{definition}

Notice that $C^{1}(J,V)$ be the vector space $Hom(J,V)$ and hence it has,
just as$Z^{2}(J,V)$, an obvious vector space structure. Using cochains one
defines $2$-coboundaries as follows.

\begin{definition}
Let $J$ be a Jordan algebra,$V$ be avector space over $K$ and let $%
f:J\longrightarrow V$ be a $1$-cochain from $J$ to $V$. Then the bilinear map%
\begin{equation*}
\delta f:J\times J\longrightarrow V.
\end{equation*}%
defined by 
\begin{equation*}
(\delta f)(x,y):=f(x\circ y).
\end{equation*}%
is called a $2-$coboundary from $J$ to $V$.
\end{definition}

Next we show that $\delta f$ is a $2$-cocycle from $J$ to $V$. The symmetry
of $\delta f$ follows from 
\begin{equation*}
(\delta f)(x,y):=f(x\circ y)=f(y\circ x)=(\delta f)(y,x).
\end{equation*}%
Furthermore the Jordan identity of $J$ imply%
\begin{equation*}
(\delta f)(x^{2},x\circ y)=f(x^{2}\circ (x\circ y))=f(x\circ (x^{2}\circ
y))=(\delta f)(x,x^{2}\circ y).
\end{equation*}%
for all $x,y\in J.$ Hence a $2$-coboundary from $J$ to $V$ is a $2$-cocycle
from $J$ to $V$. Consequently, the map $\delta :f\longrightarrow \delta f$ \
is a map from $C^{1}(J,V)$ to $Z^{2}(J,V)$.\ 

\begin{definition}
The map%
\begin{equation*}
\delta :f\in C^{1}(J,V)\longrightarrow \delta f\in Z^{2}(J,V).
\end{equation*}%
where $\delta f$ is defined by $(\delta f)(x,y):=f(x\circ y)$ for all $%
x,y\in J,$is called the coboundary operator .
\end{definition}

Using the notions of $2$-cocycles and $1$-cochains, more precisely the
vector spaces $Z^{2}(J,V)$ and $C^{1}(J,V)$, one defines the second
cohomology group of a Jordan algebra $J$ by$V$ .

\begin{definition}
Let $J$ be a Jordan algebra,$V$ be avector space over $K.$ Then the quotient
vector space%
\begin{equation*}
H^{2}(J,V):=Z^{2}(J,V)\diagup \delta C^{1}(J,V)
\end{equation*}%
is called the second cohomology group of $J$ by $V$.
\end{definition}

Elements of $H^{2}(J,V)$ are equivalence classes of $2$-cocycles and two $2$%
-cocycles $\theta _{1}$ and $\theta _{2}$ are called equivalent cocycles if
they differ by a $2$-coboundary, i.e. if $\theta _{1}$ $=$ $\theta _{2}+$ $%
\delta f$ for some $f$ $\in $ $C^{1}(J,V)$. Equivalent $2$-cocycles are
called cohomologous.

\begin{lemma}
Let $\theta _{1}$ and $\theta _{2}$ be equivalent \ Jordan cocycles and \
let $J_{\theta _{1}},J_{\theta 2}$ be respectively the centeral extensions
constructed with these Jordan cocycles .Then the centeral extensions $%
J_{\theta _{1}}$ and $J_{\theta 2}$ are are equivalent extensions.
\end{lemma}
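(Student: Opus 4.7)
The plan is to exhibit an explicit isomorphism $\phi: J_{\theta_1} \to J_{\theta_2}$ using the $1$-cochain that witnesses the cohomology. By hypothesis there exists $f \in C^1(J,V)$ with $\theta_1 = \theta_2 + \delta f$, i.e. $\theta_1(x,y) = \theta_2(x,y) + f(x \circ_J y)$ for all $x,y \in J$. The natural candidate is the linear map
\begin{equation*}
\phi: J_{\theta_1} \longrightarrow J_{\theta_2},\qquad \phi(x+v) := x + \bigl(v - f(x)\bigr),
\end{equation*}
for $x \in J$ and $v \in V$, which only alters the $V$-component by the correction term $-f(x)$.

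First I would check that $\phi$ is a Jordan algebra homomorphism. Computing the left-hand side, $\phi\bigl((x+v)\circ_{\theta_1}(y+w)\bigr) = \phi\bigl(x\circ_J y + \theta_1(x,y)\bigr) = x\circ_J y + \theta_1(x,y) - f(x\circ_J y)$, and substituting $\theta_1 = \theta_2 + \delta f$ collapses this to $x\circ_J y + \theta_2(x,y)$. The right-hand side, $\phi(x+v)\circ_{\theta_2}\phi(y+w)$, uses only the $J$-components of its arguments in the multiplication formula of $J_{\theta_2}$, and therefore equals $x\circ_J y + \theta_2(x,y)$ as well. So the two sides agree.

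Next I would observe that $\phi$ is bijective: its inverse is the linear map $\psi(x+v) = x + (v + f(x))$, as direct substitution shows. Finally I would verify the two compatibility conditions required by the definition of equivalent extensions. Writing $i, i'$ for the inclusions of $V$ and $\pi, \pi'$ for the projections onto $J$ in the two central extensions, we have $\phi\bigl(i(v)\bigr) = \phi(0+v) = v - f(0) = v = i'(v)$ (since $f$ is linear) and $\pi'\bigl(\phi(x+v)\bigr) = \pi'\bigl(x+(v-f(x))\bigr) = x = \pi(x+v)$. Both commutative squares hold, so $J_{\theta_1}$ and $J_{\theta_2}$ are equivalent central extensions.

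There is no real obstacle here; the only mildly delicate point is getting the sign of $f$ right so that the $\delta f$ term cancels against $\theta_1 - \theta_2$ rather than doubling it, and making sure the compatibility $\phi\circ i = i'$ uses linearity of $f$ (so $f(0)=0$). Everything else is a straightforward unwinding of the definitions of the twisted multiplication and of the coboundary operator.
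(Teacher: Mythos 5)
Your proposal is correct and follows essentially the same route as the paper: the paper defines $\sigma:J_{\theta_2}\to J_{\theta_1}$ by $\sigma(x+v)=x+f(x)+v$, which is exactly the inverse of your $\phi$, and verifies the homomorphism property by the same cancellation of the $\delta f$ term. Your write-up is in fact slightly more complete, since you also check the two compatibility conditions $\phi\circ i=i'$ and $\pi'\circ\phi=\pi$ that the definition of equivalent extensions requires, whereas the paper stops at showing $\sigma$ is an algebra isomorphism.
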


\begin{proof}
According to the definition of equivalent Jordan cocycles we have $\theta
_{1}=\theta _{2}+\delta f$ \ with $f\in C^{1}(J,V)$ a 1-cochain. Define $%
\sigma :J_{\theta _{2}}\longrightarrow J_{\theta _{2}+\delta f}$ by $\sigma
(x+v)=x+f(x)+v$ . Let $x+v\in \ker \sigma $ then $\sigma (x+v)=x+f(x)+v=0,$%
hence $x=v=0.$ Thus $\ker \sigma =\{0\},$ this shows that $\sigma $ is an
invertible linear transformation. Morever,for all $x+v,y+w\in J_{\theta
_{2}} $%
\begin{eqnarray*}
\sigma ((x+v)\circ _{J_{\theta _{2}}}(y+w)) &=&\sigma (x\circ _{J}y+\theta
_{2}(x,y)) \\
&=&x\circ _{J}y+f(x\circ _{J}y)+\theta _{2}(x,y) \\
&=&x\circ _{J}y+\delta f(x,y)+\theta _{2}(x,y) \\
&=&x\circ _{J}y+(\theta _{2}+\delta f)(x,y) \\
&=&x\circ _{J}y+\theta _{1}(x,y) \\
&=&(x+f(x)+v)\circ _{J_{\theta _{1}}}(y+f(y)+w) \\
&=&\sigma (x+v)\circ _{J_{\theta _{1}}}\sigma (y+w).
\end{eqnarray*}%
Then $J_{\theta _{1}}$ and $J_{\theta _{2}}$ are isomorphic. Hence,
cohomologous 2-cocycles yield equivalent (central) extensions.
\end{proof}

\begin{corollary}
A cohomology class $\theta \in H^{2}(J,V)$ defines a central extension of
the Jordan algebra $J$ by $V$ which is unique up to equivalence.
\end{corollary}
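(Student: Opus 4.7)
The plan is to assemble the three preceding results into a single statement. Given a cohomology class $[\theta]\in H^{2}(J,V)$, pick any representative $\theta\in Z^{2}(J,V)$ and form $J_{\theta}=J\oplus V$ with multiplication
\[
(x+v)\circ(y+w)=x\circ_{J}y+\theta(x,y).
\]
By the first lemma of this section $J_{\theta}$ is indeed a Jordan algebra (because $\theta$ satisfies the cocycle identity), and by the second lemma the sequence
\[
0\longrightarrow V\overset{i}{\longrightarrow}J_{\theta}\overset{\pi}{\longrightarrow}J\longrightarrow 0
\]
is a central extension of $J$ by $V$: the kernel of $\pi$ is $V$, and $V\subset Z(J_{\theta})$ because, in the formula above, any product involving a purely $V$-valued factor vanishes.

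For the uniqueness statement I need to check that the equivalence class of the extension does not depend on the choice of representative. Suppose $\theta_{1}$ and $\theta_{2}$ both lie in $[\theta]$, so $\theta_{1}=\theta_{2}+\delta f$ for some $f\in C^{1}(J,V)$. The previous lemma produces a Jordan algebra isomorphism $\sigma:J_{\theta_{2}}\longrightarrow J_{\theta_{1}}$ given by $\sigma(x+v)=x+f(x)+v$, so the underlying algebras are already known to agree.

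To promote $\sigma$ to an equivalence of extensions in the sense of the earlier definition one must verify the two diagram conditions $\sigma\circ i=i$ and $\pi\circ\sigma=\pi$. The first holds because $\sigma(0+v)=f(0)+v=v$ (as $f$ is linear), and the second because the $J$-component of $\sigma(x+v)$ is $x$. Therefore the assignment $[\theta]\mapsto J_{\theta}$ descends to a well-defined map from $H^{2}(J,V)$ into equivalence classes of central extensions of $J$ by $V$, which is exactly the content of the corollary. The main (and only) delicate point I anticipate is precisely this last compatibility check: without it one would obtain uniqueness only up to abstract Jordan-algebra isomorphism, whereas the corollary asserts uniqueness up to the strictly finer relation of equivalence of extensions.
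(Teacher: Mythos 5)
Your proof is correct and follows essentially the same route as the paper, which states this corollary as an immediate consequence of the preceding lemmas (that $J_{\theta}$ is a Jordan algebra, that it is a central extension, and that cohomologous cocycles yield equivalent extensions). Your explicit verification that $\sigma\circ i=i$ and $\pi\circ\sigma=\pi$ in fact supplies a detail the paper's lemma proof omits --- there only the algebra isomorphism $\sigma$ is checked before equivalence of extensions is asserted --- so your version is, if anything, slightly more complete.
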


The cocycle $\theta =0$ $\in H^{2}(J,V)$ gives a trivial central extension
of a Jordan algebra $J$ by $V$. Namely%
\begin{equation}
J_{0}=J\oplus V  \tag*{ ( \ direct sum of Jordan algebras )}
\end{equation}%
and the Jordan multiplication is given by 
\begin{equation*}
(x+V)\circ _{J_{0}}(y+w)=x\circ _{J}y.
\end{equation*}%
for all $x,y\in J$ and $v,w\in V.$

For any $f\in C^{1}(J,V)$ a $1$-cochains, $\delta f=0+\delta f.$ Hence, $2$%
-cocycles which are obtained from $1$-cochains are cohomologous with the
trivial $2$-cocycle $\theta =0$ we obtain the following corollary.

\begin{corollary}
A central extension defined by a $2$-coboundary is equivalent with a trivial
central extension.
\end{corollary}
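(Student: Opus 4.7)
The plan is to derive this as a direct corollary of the preceding lemma (that cohomologous $2$-cocycles yield equivalent central extensions), together with the observation made in the paragraph just before the statement.

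First, I would take an arbitrary $f \in C^{1}(J,V)$ and let $\theta = \delta f$ be the associated $2$-coboundary. I would write $\delta f = 0 + \delta f$, which exhibits $\delta f$ and the zero cocycle as cohomologous elements of $Z^{2}(J,V)$; that is, they represent the same class in $H^{2}(J,V)$. This is the key step, but it is almost trivial and only requires recalling that $0 \in Z^{2}(J,V)$.

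Next I would invoke the previous lemma with $\theta_{1} = \delta f$ and $\theta_{2} = 0$. The lemma supplies an explicit isomorphism $\sigma : J_{0} \longrightarrow J_{\delta f}$ given by $\sigma(x+v) = x + f(x) + v$ that intertwines the natural injection of $V$ and the projection onto $J$, so the two central extensions $J_{\delta f}$ and $J_{0}$ are equivalent in the sense of the earlier definition of equivalent extensions.

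Finally, I would recall that $J_{0} = J \oplus V$ with multiplication $(x+v)\circ(y+w) = x\circ_{J}y$ was already identified in the paragraph preceding the statement as the trivial central extension of $J$ by $V$. Chaining the two equivalences yields that the central extension $J_{\delta f}$ is equivalent to the trivial central extension, which is exactly the claim. No genuine obstacle arises, since all of the work has already been done in the preceding lemma; the corollary is purely a matter of specializing $\theta_{2}$ to zero and identifying $J_{0}$ as the trivial extension.
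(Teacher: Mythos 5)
Your proposal is correct and follows essentially the same route as the paper: the paper likewise observes that $\delta f = 0 + \delta f$, so a $2$-coboundary is cohomologous to the zero cocycle, and then applies the preceding lemma (cohomologous cocycles give equivalent central extensions, via the map $\sigma(x+v)=x+f(x)+v$) together with the identification of $J_{0}=J\oplus V$ as the trivial central extension. Nothing is missing.
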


Let $V$ be $m$-dimensional vector space then $%
Z^{2}(J,V)=Z^{2}(J,K^{m})=Z^{2}(J,K)^{m}$ and $%
H^{2}(J,V)=H^{2}(J,K^{m})=H^{2}(J,K)^{m}.$ So for any $\theta \in Z^{2}(J,V)$
we may write $\theta =(\theta _{1},\theta _{2},.....,\theta _{m})\in
Z^{2}(J,K)^{m}.$ Let $e_{1},e_{2},..........,e_{m}$ be a basis of $V$ then $%
\theta (x,y)=\overset{m}{\underset{i=1}{\sum }}\theta _{i}(x,y)e_{i\text{ }}$
where $\theta _{i}\in Z^{2}(J,K)$ , and $\theta $ is a $2-$coboundary if and
only if all $\theta _{i}\in \delta C^{1}(J,K).$

\begin{lemma}
dim $\delta C^{1}(J,K)=$dim $J^{2}$
\end{lemma}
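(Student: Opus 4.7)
The plan is to apply the rank--nullity theorem to the linear map
\[
\delta : C^{1}(J,K) \longrightarrow Z^{2}(J,K), \qquad f \longmapsto \delta f,
\]
where $C^{1}(J,K) = \mathrm{Hom}(J,K) = J^{*}$ has dimension $\dim J$. This yields
\[
\dim \delta C^{1}(J,K) \;=\; \dim C^{1}(J,K) - \dim \ker \delta \;=\; \dim J - \dim \ker \delta,
\]
so the whole task reduces to identifying $\ker \delta$.

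Next I would unwind the definition of the coboundary: $f \in \ker \delta$ precisely when $(\delta f)(x,y) = f(x \circ y) = 0$ for all $x,y \in J$. Since $J^{2} = B(J,J)$ is by definition the linear span of all products $x \circ y$, the condition $f(x\circ y)=0$ for all $x,y$ is equivalent to $f|_{J^{2}} = 0$. In other words, $\ker \delta$ is exactly the annihilator $\mathrm{Ann}(J^{2}) \subset J^{*}$ of the subspace $J^{2} \subset J$.

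Finally I would invoke the standard fact from linear algebra that for any subspace $W$ of a finite-dimensional vector space $V$,
\[
\dim \mathrm{Ann}(W) \;=\; \dim V - \dim W,
\]
applied with $V = J$ and $W = J^{2}$. This gives $\dim \ker \delta = \dim J - \dim J^{2}$, and substituting back yields
\[
\dim \delta C^{1}(J,K) \;=\; \dim J - (\dim J - \dim J^{2}) \;=\; \dim J^{2},
\]
as claimed. There is no real obstacle here; the only point requiring care is recognizing that $J^{2}$ denotes the linear span (not just the set) of products, so that $f$ vanishing on all products $x\circ y$ really does force $f$ to vanish on the subspace $J^{2}$.
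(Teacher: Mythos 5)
Your proof is correct and rests on the same key observation as the paper's, namely that $\delta f$ is determined by the restriction $f|_{J^{2}}$, so that $\ker\delta$ is the annihilator of $J^{2}$ in $J^{*}$. The paper realizes this by choosing a basis of $J$ adapted to $J^{2}$ and noting that $\delta x_{k}^{*}=0$ for basis vectors outside $J^{2}$, whereas you phrase the same count coordinate-freely via rank--nullity and $\dim\mathrm{Ann}(W)=\dim V-\dim W$; your version has the minor advantage of not needing to check separately that the surviving images $\delta x_{k}^{*}$ are linearly independent.
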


\begin{proof}
Let $J$ be n-dimensional Jordan algebra with basis $%
<x_{1},x_{2},...........,x_{n}>$ and $J^{\ast }$ generated by the dual basis 
$<x_{1}^{\ast },x_{2}^{\ast },...........,x_{n}^{\ast }>$ defined by $%
x_{i}^{\ast }(x_{i})=1$ and $x_{i}^{\ast }(x_{j})=0$ if $i\neq j.$Let $%
<x_{r},x_{r+1},...........,x_{r+s}>$ be a basis of $J^{2}.$Notice that $%
C^{1}(J,K)$ is the dual vector space of $J.$ The coboundary operator%
\begin{equation*}
\delta :J^{\ast }\longrightarrow Z^{2}(J,K)
\end{equation*}%
defined by 
\begin{equation*}
(\delta x_{k}^{\ast })(x_{i},x_{j})=x_{k}^{\ast }(x_{i}\circ x_{j})
\end{equation*}%
for $x_{k}^{\ast }\in J$ $^{\ast }$ and $x_{i},x_{j}\in J$. Let $x_{k}\notin
J^{2}$ then $\delta $ $x_{k}^{\ast }=0,$ so $\delta C^{1}(J,K)$ is spanned
by $<x_{r}^{\ast },x_{r+1}^{\ast },...........,x_{r+s}^{\ast }>$ which have
a dimension equal to the dimension of $J^{2}.$
\end{proof}

\section{Analouge of The Skjelbred-Sund Method}

Let $J$ be a Jordan algebra with non-trivial centre $Z(J)$. We will show
that such algebra are centeral extensions of smaller Jordan algebras .

\begin{lemma}
\label{h1}Every $n$-dimensional Jordan algebra with non-trivial centre is a
centeral extension of \ a lower dimensional Jordan algebra .
\end{lemma}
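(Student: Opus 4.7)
The plan is to construct the central extension directly by quotienting out a one-dimensional central subspace. Since $J$ has non-trivial centre, pick any nonzero element $z \in Z(J)$ and let $V = Kz$, a one-dimensional subspace of $Z(J)$. My first task is to verify that $V$ is actually an ideal of $J$: for any $v \in V$ and $x \in J$, we have $v \circ x = 0 \in V$ because $v \in Z(J)$. So $V$ is an ideal, and in particular a Jordan subalgebra (with trivial multiplication, since $V \subseteq Z(J)$ forces $V \circ V = \{0\}$).

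Next I would form the quotient Jordan algebra $M := J/V$, which has dimension $n-1$, and write down the canonical maps
\begin{equation*}
\epsilon : V \longrightarrow J, \qquad \lambda : J \longrightarrow M = J/V,
\end{equation*}
where $\epsilon$ is the inclusion and $\lambda$ is the canonical projection. Then $\epsilon$ is an injective Jordan algebra homomorphism, $\lambda$ is a surjective Jordan algebra homomorphism, and $\operatorname{im}\epsilon = V = \ker \lambda$. This exhibits the sequence $V \overset{\epsilon}{\longrightarrow} J \overset{\lambda}{\longrightarrow} M$ as an extension of $M$ by $V$ in the sense of the earlier definition.

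Finally, I would check the centrality condition: $\ker \lambda = V \subseteq Z(J)$ by construction. Hence the extension is central, and since $\dim M = n - 1 < n$, $J$ is a central extension of a lower dimensional Jordan algebra, as required. There is no real obstacle here — the argument is entirely formal once one observes that any one-dimensional subspace of $Z(J)$ is automatically an ideal with trivial multiplication; the only point to be careful about is that $V$ really sits in the centre (not merely being abelian), which is what guarantees the extension is central rather than merely an extension.
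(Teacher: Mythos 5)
Your proof is correct, but it takes a genuinely different and more economical route than the paper. You quotient by an arbitrary one-dimensional central subspace $V=Kz$, observe that $V$ is an ideal with trivial multiplication, and then invoke the paper's abstract definition of a central extension (the exact sequence $V\overset{\epsilon}{\longrightarrow}J\overset{\lambda}{\longrightarrow}J/V$ with $\ker\lambda=V\subseteq Z(J)$); this settles the statement as literally phrased with essentially no computation. The paper instead quotients by the \emph{entire} centre, chooses a linear section $\eta:J/Z(J)\longrightarrow J$, shows that $\theta(\bar x,\bar y)=\eta(\bar x)\circ\eta(\bar y)-\eta(\bar x\circ\bar y)$ is a Jordan $2$-cocycle, and exhibits an explicit isomorphism $J\cong(J/Z(J))_{\theta}$. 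The extra work in the paper's version is not gratuitous: the classification procedure that follows needs every nilpotent Jordan algebra to be realized concretely as $J_{\theta}$ for a cocycle $\theta$ on the smaller algebra (so that descendants correspond to $\mathrm{Aut}$-orbits of allowable subspaces of $H^{2}$), and it is the quotient by the full centre that later guarantees the condition $\theta^{\perp}\cap Z=0$ can be arranged. So your argument proves the lemma as stated, while the paper's proof simultaneously supplies the cocycle realization on which the rest of the method depends; if you intend your version to feed into the classification, you would still need to supply that realization separately.
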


\begin{proof}
Let $J$ be $n-$dimensional Jordan algebra with non trivial centre$Z(J)$ ,
then $J/Z(J)$ is a Jordan algebra of smaller dimension . Let $\pi
:J\longrightarrow J/Z(J)$ be a projection map, choose\ an injective linear
map $\eta :J/Z(J)\longrightarrow J$ such that $\pi (\eta (\bar{x}))=\bar{x}$
for all $\bar{x}\in J/Z(J).$ For $\bar{x},\bar{y}\in J/Z(J)$ we have $\eta (%
\bar{x})\circ \eta (\bar{y})-\eta (\bar{x}\circ \bar{y})\in Z(J)$. This
define a symmetric bilinear map 
\begin{equation*}
\theta :J/Z(J)\times J/Z(J)\longrightarrow Z(J)
\end{equation*}
defined by $\theta (\bar{x},\bar{y})=\eta (\bar{x})\circ \eta (\bar{y})-\eta
(\bar{x}\circ \bar{y}).$ Then%
\begin{eqnarray*}
\theta (\bar{x}^{2},\bar{x}\circ \bar{y}) &=&\eta (\bar{x}^{2})\circ \eta (%
\bar{x}\circ \bar{y})-\eta (\bar{x}^{2}\circ (\bar{x}\circ \bar{y})) \\
&=&(\eta (\bar{x})\circ \eta (\bar{x})-\theta (\bar{x},\bar{x}))\circ (\eta (%
\bar{x})\circ \eta (\bar{y})-\theta (\bar{x},\bar{y}))-\eta (\bar{x}\circ (%
\bar{x}^{2}\circ \bar{y}) \\
&=&\eta (\bar{x})\circ (\eta (\bar{x})^{2}\circ \eta (\bar{y}))-\eta (\bar{x}%
\circ (\bar{x}^{2}\circ \bar{y}) \\
&=&\eta (\bar{x})\circ (\eta (\bar{x}^{2})\circ \eta (\bar{y}))-\eta (\bar{x}%
\circ (\bar{x}^{2}\circ \bar{y}) \\
&=&\theta (\bar{x},\bar{x}^{2}\circ \bar{y}).
\end{eqnarray*}%
Hence, $\theta $ is $2-$cocycle. It remains to show that $(J/Z(J))_{\theta
}=J/Z(J)\oplus Z(J)$ is isomorphic to $J$. Let $x\in J$, then x can uniquely
written as $x=\eta (\bar{y})+z$, where $\bar{y}\in J/Z(J)$ and $z\in Z(J).$
Define $\phi :J\longrightarrow (J/Z(J))_{\theta }$ by $\phi (x)=\bar{y}+z.$
Then $\phi $ is bijective and 
\begin{eqnarray*}
\phi (x_{1}\circ _{J}x_{2}) &=&\phi ((\eta (\bar{y}_{1})+z_{1})\circ
_{J}(\eta (\bar{y}_{2})+z_{2})) \\
&=&\phi (\eta (\bar{y}_{1})\circ _{J}\eta (\bar{y}_{2})) \\
&=&\phi (\eta (\bar{y}_{1}\circ _{J/Z(J)}\bar{y}_{2})+\theta (\bar{y}_{1},%
\bar{y}_{2})) \\
&=&\bar{y}_{1}\circ \bar{y}_{2}+\theta (\bar{y}_{1},\bar{y}_{2}) \\
&=&(\bar{y}_{1}+z_{1})\circ _{(J/Z(J))_{\theta }}(\bar{y}_{2}+z_{2}) \\
&=&\phi (x_{1})\circ _{_{(J/Z(J))_{\theta }}}\phi (x_{2})
\end{eqnarray*}%
Then $\phi $ is an isomorhism.
\end{proof}

So in particular, Every $n$-dimensional nilpotent Jordan algebras is a
centeral extension of a lower dimensional nilpotent Jordan algebra .

\begin{definition}
Let $\ \theta \in Z^{2}(J,V)$ then \ 
\begin{equation*}
\theta ^{\bot }=\{x\in J:\theta (x,y)=0\text{ for all }y\in J\}
\end{equation*}%
is called the radical of $\theta $ .
\end{definition}

Let us now fix a basis $\{e_{1},...,e_{r}\}$ of $V$ . A cocycle $\theta $ $%
\in Z_{2}(J,V)$ such that $\theta (x,y)=\overset{r}{\underset{i=1}{\sum }}%
\theta _{i}(x,y)e_{i\text{ }}$ where $\theta _{i}\in Z^{2}(J,K).$ Then $%
\theta ^{\perp }=\theta _{1}^{\perp }\cap \theta _{2}^{\perp }\cap ....\cap
\theta _{r}^{\perp }$

\begin{lemma}
Let $\ \theta \in Z^{2}(J,V)$ then $Z(J_{\theta })=(\theta ^{\bot }\cap
Z(J))\oplus V.$
\end{lemma}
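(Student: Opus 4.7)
The plan is to prove the two inclusions separately, exploiting the fact that the $V$-component of any element plays no role in the multiplication of $J_\theta$.

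First I would establish the easy inclusion $V \subseteq Z(J_\theta)$. For any $v \in V$ and any $y+w \in J_\theta$, the formula $(0+v)\circ(y+w) = 0\circ_J y + \theta(0,y) = 0$ shows immediately that $V$ lies in the center. Combined with this, for $x \in \theta^\perp \cap Z(J)$ and arbitrary $y+w \in J_\theta$, we get $(x+v)\circ(y+w) = x\circ_J y + \theta(x,y) = 0 + 0 = 0$ using that $x \in Z(J)$ kills the first summand and $x \in \theta^\perp$ kills the second. This gives $(\theta^\perp \cap Z(J)) \oplus V \subseteq Z(J_\theta)$; the sum is indeed direct as a subspace of $J_\theta = J \oplus V$ because $(\theta^\perp \cap Z(J)) \subseteq J$ meets $V$ only in $\{0\}$.

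For the reverse inclusion, I would take an arbitrary $x+v \in Z(J_\theta)$ and write, for every $y \in J$ and $w \in V$,
\begin{equation*}
0 = (x+v)\circ(y+w) = x\circ_J y + \theta(x,y).
\end{equation*}
Since $x\circ_J y \in J$ and $\theta(x,y) \in V$ lie in complementary subspaces of $J_\theta = J \oplus V$, both summands must vanish individually. Letting $y$ range over $J$, this forces $x \circ_J y = 0$ for all $y \in J$ (so $x \in Z(J)$) and simultaneously $\theta(x,y) = 0$ for all $y \in J$ (so $x \in \theta^\perp$). Hence $x \in \theta^\perp \cap Z(J)$ and $v \in V$, giving $x+v \in (\theta^\perp \cap Z(J)) \oplus V$.

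There is no real obstacle here; the only subtlety worth spelling out is the splitting argument in the display above, where one uses that $J$ and $V$ sit as internal complements inside $J_\theta$ so that the $J$-part and $V$-part of the product can be read off independently. Both inclusions together yield the stated equality.
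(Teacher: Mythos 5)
Your proof is correct and follows essentially the same route as the paper's: both directions are handled by expanding $(x+v)\circ(y+w)=x\circ_J y+\theta(x,y)$ and reading off the $J$- and $V$-components. The only difference is that you make explicit the splitting argument (that $x\circ_J y\in J$ and $\theta(x,y)\in V$ must vanish separately), which the paper leaves implicit.
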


\begin{proof}
Let $x+v\in Z(J_{\theta })$ then ($x+v)\circ _{J_{\theta }}(y+w)=0$ for all $%
y+w\in J_{\theta }.$Then $x\circ _{J}y+\theta (x,y)=0$ for all $y+w\in
J_{\theta }.$It follows that $x\in \theta ^{\bot }\cap Z(J)$ then $x+v\in
(\theta ^{\bot }\cap Z(J))\oplus V.$On the other hand , suppose that $x+v\in
(\theta ^{\bot }\cap Z(J))\oplus V$ then for all $y+w\in J_{\theta }$ we
have ($x+v)\circ _{J_{\theta }}(y+w)=x\circ _{J}y+\theta (x,y)=0$ then $%
x+v\in Z(J_{\theta }).$ Then $Z(J_{\theta })=(\theta ^{\bot }\cap
Z(J))\oplus V.$
\end{proof}

A subspace $W$ of $H^{2}(J,K)$ is said to be allowable if $\underset{%
_{\theta \in W}}{\cap }\theta ^{\perp }\cap Z(J)=0.$

\begin{corollary}
$\theta ^{\bot }\cap Z(J)=0$ if and only if $Z(J_{\theta })=V.$
\end{corollary}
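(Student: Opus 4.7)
The plan is to derive this corollary as an immediate consequence of the preceding lemma, which identifies the centre of the extension as $Z(J_\theta) = (\theta^\bot \cap Z(J)) \oplus V$ inside $J_\theta = J \oplus V$. Both implications then reduce to comparing the two sides of this direct sum decomposition, so no new computation is required.

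For the forward direction, I would assume $\theta^\bot \cap Z(J) = 0$ and substitute directly into the lemma to obtain $Z(J_\theta) = 0 \oplus V = V$. For the reverse direction, I would assume $Z(J_\theta) = V$ and combine this with the lemma to get $(\theta^\bot \cap Z(J)) \oplus V = V$. Since the ambient sum $J \oplus V$ is direct and $\theta^\bot \cap Z(J) \subseteq J$ while $V$ is already exhausted on the right-hand side, uniqueness of the decomposition forces $\theta^\bot \cap Z(J) = 0$; a dimension count gives the same conclusion.

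There is essentially no obstacle here, since the work has been done in the previous lemma. The only point worth emphasising in the write-up is that $V$ is always contained in $Z(J_\theta)$ (because $V$ has trivial multiplication in $J_\theta$ by construction), so the content of the corollary is precisely that the ``Jordan part'' $\theta^\bot \cap Z(J)$ of the centre vanishes exactly when the centre of the extension collapses to $V$.
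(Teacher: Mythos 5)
Your proof is correct and follows exactly the route the paper intends: the corollary is stated as an immediate consequence of the preceding lemma $Z(J_{\theta })=(\theta ^{\bot }\cap Z(J))\oplus V$, and the paper supplies no further argument. Your additional remarks on the directness of the sum and on $V\subseteq Z(J_{\theta })$ are accurate and only make the deduction more explicit.
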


If $\theta ^{\bot }\cap Z(J)\neq 0$ then $J_{\theta }$ can be obtaines as a
centeral extension of another Jordan algebra $\tilde{J}$ \ by $(\theta
^{\bot }\cap Z(J))\oplus V.$ Then to avoid constructing the same Jordan
algebra as a centeral extension of different Jordan algebra we want to
restrict to $\theta $ such that $\theta ^{\bot }\cap Z(J)=0$ .

Let $J$ be a Jordan over a field $K$ . For each $\theta \in Z^{2}(J,V)$ and $%
\phi \in Aut(J)$ ,the automorphism group of $J$ , we define $\phi \theta
(x,y)=\theta (\phi x,\phi y)$ for any $x,y\in J.$ So $Aut(J)$ acts on $%
Z^{2}(J,V)$ , and $\phi \theta \in \delta C^{1}(J,V)$ if and only if $\theta
\in \delta C^{1}(J,V).$ Then $Z^{2}(J,V)$ and $\delta C^{1}(J,V)$ are
invariant under the action of $Aut(J).$ So $Aut(J)$ acts on $H^{2}(J,V)$ .

Let $\theta _{1},\theta _{2}\in H^{2}(J,V)$ and $\theta _{1}^{\bot }\cap
Z(J)=\theta _{2}^{\bot }\cap Z(J)=0$ , i.e $Z(J_{\theta _{1}})=Z(J_{\theta
_{2}})=V.$ Assume that $J_{\theta _{1}}$ and $J_{\theta _{2}}$ are
isomorphic, let $\alpha :J_{\theta _{1}}\longrightarrow J_{\theta _{2}}$ be
an isomorphism .Dividing with the common center $V$ we obtain an
automorphism $\alpha _{0}:J\longrightarrow J.$ We can realize $\alpha $ as a
matrix to a suitable basis for $J\oplus V$ which assumed to contain a basis
for $J$ and a basis for $V$ :%
\begin{equation*}
\alpha =\left( 
\begin{array}{cc}
\begin{array}{c}
\alpha _{0} \\ 
\end{array}
& 
\begin{array}{c}
0 \\ 
\end{array}
\\ 
\varphi & \psi%
\end{array}%
\right)
\end{equation*}%
Where $\alpha _{0}\in Aut(J)$ , $\psi =\alpha \mid _{V}\in Gl(v)$ and $%
\varphi \in Hom(J,V).$

Now $\alpha $ preserves the Jordan products and writing $\circ _{1}$and $%
\circ _{2}$ for the Jordan products of $J_{\theta _{1}}$ and $J_{\theta
_{2}} $ respectively, we have%
\begin{equation*}
\alpha ((x+v)\circ _{1}(y+w))=\alpha (x+v)\circ _{2}\alpha (y+w)\text{ \ ; \ 
}x,y\in J\text{ and }v,w\in V.
\end{equation*}%
Where 
\begin{eqnarray*}
\alpha (x+v) &=&\alpha _{0}(x)+\varphi (x)+\psi (v) \\
\alpha (y+w) &=&\alpha _{0}(y)+\varphi (y)+\psi (w).
\end{eqnarray*}%
Then%
\begin{eqnarray*}
\alpha ((x+v)\circ _{1}(y+w)) &=&\alpha (x\circ y+\theta _{1}(x,y)) \\
&=&\alpha _{0}(x\circ y)+\varphi (x\circ y)+\psi (\theta _{1}(x,y)). \\
\alpha ((x+v)\circ _{2}\alpha (y+w)) &=&\alpha _{0}((x)+\varphi (x)+\psi
(v))\circ _{2}(\alpha _{0}(y)+\varphi (y)+\psi (w)) \\
&=&\alpha _{0}(x)\circ \alpha _{0}(y)+\theta _{2}(\alpha _{0}(x),\alpha
_{0}(y))
\end{eqnarray*}%
This yields%
\begin{eqnarray*}
\theta _{2}(\alpha _{0}(x),\alpha _{0}(y)) &=&\varphi (x\circ y)+\psi
(\theta _{1}(x,y))\ \ ,\ x,y\in J \\
\theta _{2}(\alpha _{0}(x),\alpha _{0}(y)) &=&\psi (\theta _{1}(x,y))\text{
\ mod }\delta C^{1}(J,V)
\end{eqnarray*}%
In case of $\theta _{1}=\theta _{2}=\theta $ we obtain the following
description of $Aut(J_{\theta })$

\begin{lemma}
\label{n}Let J be a nilpotent Jordan algebra .Let $\theta \in H^{2}(J,V)$
and $\theta ^{\bot }\cap Z(J)=0.$ Then the automorphism group $Aut(J_{\theta
})$ of the extension algebra $J_{\theta }$ consists of all linear operators
of the matrix form 
\begin{equation*}
\alpha =\left( 
\begin{array}{cc}
\begin{array}{c}
\alpha _{0} \\ 
\end{array}
& 
\begin{array}{c}
0 \\ 
\end{array}
\\ 
\varphi & \psi%
\end{array}%
\right)
\end{equation*}%
Where $\alpha _{0}\in Aut(J)$ , $\psi =\alpha \mid _{V}\in Gl(V)$ and $%
\varphi \in Hom(J,V).$ Such that :%
\begin{equation*}
\theta (\alpha _{0}(x),\alpha _{0}(y))=\varphi (x\circ y)+\psi \theta (x,y)\
,\forall \ x,y\in J.
\end{equation*}%
\begin{equation*}
\end{equation*}
\end{lemma}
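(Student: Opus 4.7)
The plan is to derive the lemma as a specialization of the isomorphism analysis already carried out in the text, with $\theta_1 = \theta_2 = \theta$, the key new input being that under the hypothesis $\theta^{\perp} \cap Z(J) = 0$ any automorphism of $J_{\theta}$ automatically stabilizes $V$. First I would invoke the corollary stating that $\theta^{\perp} \cap Z(J) = 0$ is equivalent to $Z(J_{\theta}) = V$. Since every Jordan algebra automorphism carries the centre onto itself, any $\alpha \in \mathrm{Aut}(J_{\theta})$ satisfies $\alpha(V) = V$; hence $\psi := \alpha\mid_{V} \in GL(V)$ is well defined, and $\alpha$ descends to an automorphism $\bar{\alpha}$ of the quotient $J_{\theta}/V$.

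Next I would fix a basis of $J_{\theta} = J \oplus V$ obtained by concatenating a basis of the $J$-part with a basis of $V$, and identify the quotient $J_{\theta}/V$ with $J$ as Jordan algebras via the natural projection (this uses only that $V = \ker \pi$ in the central extension). Under this identification, $\bar{\alpha}$ is some element $\alpha_0 \in \mathrm{Aut}(J)$. Because $\alpha$ preserves $V$ and descends to $\alpha_0$, for any $x \in J$ we may write $\alpha(x) = \alpha_0(x) + \varphi(x)$ for a uniquely determined linear map $\varphi : J \to V$, giving precisely the stated block matrix form with zero upper-right block and $\alpha_0, \psi, \varphi$ in the remaining positions.

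It then remains to translate the condition "$\alpha$ preserves the Jordan product" into the displayed identity. This is exactly the computation executed in the excerpt just above the lemma statement, applied with $\theta_1 = \theta_2 = \theta$: expanding both $\alpha((x+v) \circ_{\theta} (y+w))$ and $\alpha(x+v) \circ_{\theta} \alpha(y+w)$, the $V$-components (after killing the $J$-components, which agree because $\alpha_0$ is an automorphism of $J$) yield
\begin{equation*}
\theta(\alpha_0(x),\alpha_0(y)) = \varphi(x \circ y) + \psi\, \theta(x,y), \qquad x,y \in J.
\end{equation*}
Conversely, any linear map of this block form with $\alpha_0 \in \mathrm{Aut}(J)$, $\psi \in GL(V)$ is invertible (block lower triangular with invertible diagonal), and the same calculation read in reverse shows that the above identity is sufficient for it to preserve the Jordan product, hence to lie in $\mathrm{Aut}(J_{\theta})$.

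The only real obstacle is the assertion $\alpha(V) = V$; once that is in hand from the corollary, the rest is organizing the block decomposition and citing the computation already displayed. The nilpotency hypothesis plays no additional role beyond ensuring that the centre is a meaningful object in the classification context; the cocycle restriction $\theta^{\perp} \cap Z(J) = 0$ is what actually pins down $Z(J_{\theta}) = V$.
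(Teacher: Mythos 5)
Your proposal is correct and follows essentially the same route as the paper: the paper's ``proof'' of this lemma is precisely the isomorphism computation displayed just before it (writing $\alpha$ in block form after dividing by the common centre $V$, which equals $Z(J_{\theta})$ because $\theta^{\perp}\cap Z(J)=0$, and comparing the $V$-components of the two sides of $\alpha((x+v)\circ(y+w))=\alpha(x+v)\circ\alpha(y+w)$), specialized to $\theta_{1}=\theta_{2}=\theta$. Your only additions are to make explicit the appeal to the corollary identifying $Z(J_{\theta})$ with $V$ and to spell out the converse (sufficiency) direction, both of which the paper leaves implicit.
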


Thus $J_{\theta _{1}}$ and $J_{\theta _{2}}$ are isomorphic if and only if
there exist $\alpha _{0}\in Aut(J)$ and $\psi \in Gl(V)$ such that $\alpha
_{0}\theta _{2}=\psi \theta _{1}$ mod $\delta C^{1}(J,V)$, i.e $\alpha
_{0}\theta _{2}$ and $\psi \theta _{1}$ are cohomologous. Let $\theta
_{1}=(\theta _{11},\theta _{12},.....,\theta _{1s})\in Z^{2}(J,V)$ , $\theta
_{2}=(\theta _{21},\theta _{22},.....,\theta _{2s})\in Z^{2}(J,V)$ and $%
e_{1},e_{2},..........,e_{m}$ be a basis of $V$ then $\theta _{1}(x,y)=%
\overset{s}{\underset{i=1}{\sum }}\theta _{1i}(x,y)e_{i\text{ }}$ and $%
\theta _{2}(x,y)=\overset{s}{\underset{i=1}{\sum }}\theta _{2i}(x,y)e_{i%
\text{ }}.$Suppose that $J_{\theta _{1}}$ and $J_{\theta _{2}}$ are
isomorphic , then $\overset{s}{\underset{i=1}{\sum }}\alpha _{0}\theta
_{2i}(x,y)e_{i\text{ }}=\overset{s}{\underset{i=1}{\sum }}\theta
_{1i}(x,y)\psi (e_{i\text{ }}).$ Let $\psi (e_{i\text{ }})=\overset{m}{%
\underset{j=1}{\sum }}a_{ji}e_{j}$ then $\overset{s}{\underset{i=1}{\sum }}%
\alpha _{0}\theta _{2i}(x,y)e_{i\text{ }}=\overset{s}{\underset{i=1}{\sum }}%
\overset{m}{\underset{j=1}{\sum }}\theta _{1i}(x,y)a_{ji}e_{j}$ \ hence : 
\begin{equation}
\alpha _{0}\theta _{2i}(x,y)=\overset{s}{\underset{i=1}{\sum }}\theta
_{1i}(x,y)a_{ji}\text{ \ \ mod }\delta C^{1}(J,V)  \label{k}
\end{equation}%
It follows that $\alpha _{0}\theta _{2i}$ span the same subspace of $%
H^{2}(J,K)$ as the $\theta _{1i}$ . We have proved

\begin{lemma}
Let $\theta =(\theta _{1},\theta _{2},.....,\theta _{m})$ and $\eta =(\eta
_{1},\eta _{2},.....,\eta _{m})\in H^{2}(J,V)$ and $\theta ^{\bot }\cap
Z(J)=\eta ^{\bot }\cap Z(J)=0$ . Then $J_{\theta }$ and $J_{\eta }$ are
isomorphic if and only if there exist $\phi \in Aut(J)$ such that $\phi
\theta _{i}$ span the same subspace of $H^{2}(J,K)$ as the $\eta _{i}$ .
\end{lemma}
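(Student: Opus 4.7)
The plan is to leverage the block-matrix isomorphism analysis developed just before Lemma \ref{n}, essentially translating it into the language of spanning subspaces of $H^2(J,K)$.

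For the forward direction, suppose $\alpha : J_\theta \to J_\eta$ is a Jordan algebra isomorphism. By the derivation preceding Lemma \ref{n}, $\alpha$ takes the block form determined by an automorphism $\alpha_0 \in Aut(J)$, an invertible $\psi \in Gl(V)$, and a linear map $\varphi \in Hom(J,V)$, satisfying $\eta(\alpha_0 x, \alpha_0 y) = \psi(\theta(x,y)) + \varphi(x \circ y)$. Expanding $\psi(e_i) = \sum_j a_{ji} e_j$ and reading off components in the basis $\{e_i\}$ of $V$, this is exactly $\alpha_0 \eta_i \equiv \sum_j a_{ji} \theta_j \pmod{\delta C^1(J,K)}$. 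Since $(a_{ji})$ is invertible, the classes $\alpha_0 \eta_i$ and $\theta_j$ span the same subspace of $H^2(J,K)$. Taking $\phi = \alpha_0^{-1}$ then yields that $\phi \theta_i$ and $\eta_i$ span the same subspace of $H^2(J,K)$, as required.

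For the backward direction, assume there exists $\phi \in Aut(J)$ with $\phi \theta_i$ and $\eta_i$ spanning the same subspace of $H^2(J,K)$. The idea is to reverse the construction. Choose an invertible matrix $B = (b_{ji})$ and cochains $f_i \in C^1(J,K)$ satisfying
\begin{equation*}
\phi \theta_i = \sum_j b_{ji}\, \eta_j + \delta f_i,
\end{equation*}
and then define $\psi \in Gl(V)$ by $\psi(e_j) = \sum_i b_{ji} e_i$, $\varphi \in Hom(J,V)$ by $\varphi(x) = \sum_i f_i(x) e_i$, and set $\alpha : J_\eta \to J_\theta$ by $\alpha(x+v) = \phi(x) + \varphi(x) + \psi(v)$. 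A direct expansion of both sides of $\alpha((x+v)\circ_\eta(y+w)) = \alpha(x+v)\circ_\theta \alpha(y+w)$, using that $\phi$ is an automorphism of $J$, reduces the homomorphism condition exactly to the displayed identity, which holds by choice of $B$ and the $f_i$. Since $\phi$ and $\psi$ are invertible, $\alpha$ is bijective, hence an isomorphism.

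The main obstacle lies in justifying that the matrix $B$ above can be chosen invertible even when the classes $\eta_i$ (or $\phi\theta_i$) are linearly dependent in $H^2(J,K)$. In that degenerate case the $b_{ji}$ are not uniquely determined by the spanning condition: one selects $d$ linearly independent classes among $\eta_1,\ldots,\eta_m$, where $d$ is the dimension of the common spanning subspace, uniquely expresses each $\phi\theta_i$ in terms of this basis to fix $d$ rows of $B$, and then fills the remaining $m-d$ rows with scalars chosen to complete $B$ to a nonsingular matrix. This is possible by elementary linear algebra. Once invertibility of $B$ is secured, the remainder of the argument is a formal computation mirroring the derivation given immediately before Lemma \ref{n}.
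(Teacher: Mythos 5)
Your proof is correct and follows essentially the same route as the paper: the forward direction is exactly the block-matrix computation preceding Lemma \ref{n} that leads to equation (\ref{k}), and your converse simply runs that computation backwards to assemble the isomorphism from $\phi$, $\psi$ and $\varphi$. You are in fact more complete than the paper, which asserts the ``if and only if'' without explicitly constructing the isomorphism from the matrix $B$ and the cochains $f_i$, and which ignores the degenerate case of linearly dependent classes; your recipe for that case is stated slightly loosely (after filling the free rows of $B$ one must still correct the first $d$ rows, since the remaining $\eta_j$ are themselves combinations of the chosen basis), but the underlying claim --- that two surjections of $K^m$ onto the common span differ by an invertible matrix --- is true and elementary, so this is a wording issue rather than a gap.
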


i.e Two allowable subspaces give isomorphic Jordan algebras if and only if
they are in the same $Aut(J)$-orbit.%
\begin{equation*}
\end{equation*}

Let $J=I_{1}\oplus I_{2}$ be the direct sum of two ideals. Suppose that $%
I_{2}$ is contained in the centre of $J$. Then $I_{2}$ is called a centeral
component of $J$.%
\begin{equation*}
\end{equation*}

Jordan algebra $J=I_{1}\oplus I_{2}$ with centeral component $I_{2}$\ is a
trivial centeral extension of $I_{1}$ by $I_{2}.$ i.e Jordan algebras with
centeral components are simply obtained by taking direct sums of Jordan
algebras of smaller dimension with trivial Jordan algebras ( Vector spaces
). Therefore when constructing Jordan algebra with non trivial centre as
centeral extension we want to avoid constructing those with centeral
components. The folowing Lemma will help us to exclude Jordan algebras wit
centeral components .

\begin{lemma}
Let $\theta (x,y)=\underset{i=1}{\overset{r}{\sum }}$ $\theta
_{i}(x,y)e_{i}\in H^{2}(J,V)$ and $\theta ^{\bot }\cap Z(J)=0$ . Then $%
J_{\theta }$ has a centeral component if and only if \ $\theta _{1},\theta
_{2},.....,\theta _{r}$ are linearly dependent .
\end{lemma}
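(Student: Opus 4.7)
The plan is to identify a central component of $J_\theta$ with a distinguished direction of $V$ along which $\theta$ vanishes identically, after a change of basis in $V$ and a cohomologous modification of $\theta$. Both operations preserve the isomorphism class of $J_\theta$ (the first trivially, the second by the earlier lemma that cohomologous cocycles yield equivalent central extensions) and preserve the span of $\theta_1,\ldots,\theta_r$ in $H^2(J,K)$, so they are legitimate bookkeeping devices in both directions. Throughout I use the earlier formula $Z(J_\theta) = (\theta^{\perp}\cap Z(J)) \oplus V$, which under the hypothesis $\theta^{\perp}\cap Z(J)=0$ reduces to $Z(J_\theta)=V$.

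For the $(\Leftarrow)$ direction, suppose $\sum_i \lambda_i \theta_i = \delta f$ in $Z^2(J,K)$ with $(\lambda_1,\ldots,\lambda_r)\neq 0$, and without loss of generality $\lambda_r\neq 0$. I perform the change of basis in $V$ whose inverse-transpose row $r$ is $(\lambda_1,\ldots,\lambda_r)$, so that the new $r$-th component becomes $\tilde\theta_r=\sum_i\lambda_i\theta_i=\delta f$; then I replace $\theta$ by the cohomologous cocycle $\theta - \delta(f\cdot\tilde e_r)$, which has vanishing $r$-th component. Now $\theta$ takes values in $V':=\operatorname{span}(\tilde e_1,\ldots,\tilde e_{r-1})$, and a direct computation shows $J_\theta = (J\oplus V')\oplus K\tilde e_r$ as a direct sum of ideals: the first summand is closed under multiplication because $\theta$ lands in $V'$, and the second lies in $V\subseteq Z(J_\theta)$, exhibiting $K\tilde e_r$ as a central component.

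For the $(\Rightarrow)$ direction, assume $J_\theta = I_1\oplus I_2$ with $I_2\neq 0$ a central component. Then $I_2\subseteq Z(J_\theta)=V$, and decomposing an arbitrary $v\in V$ inside $J_\theta=I_1\oplus I_2$ shows $V=(V\cap I_1)\oplus I_2$. Fix a basis of $V$ adapted to this splitting, with $e_{r-k+1},\ldots,e_r$ spanning $I_2$ where $k=\dim I_2\geq 1$. The projection $\pi_J\colon I_1\to J$ along $V$ is surjective with kernel $V\cap I_1$, so admits a linear section $s(x)=x+\phi(x)$ with $\phi\colon J\to V$. Since $I_1$ is closed under the Jordan product, $s(x)\circ s(y)-s(x\circ y)=\theta(x,y)-\phi(x\circ y)$ lies in $I_1\cap V=V\cap I_1$. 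Projecting onto $I_2$ along $V\cap I_1$ yields $p_2\theta = \delta(p_2\phi)$, i.e.\ writing $p_2\phi=\sum_{i>r-k}f_i e_i$, each of the last $k\geq 1$ components satisfies $\theta_i=\delta f_i$, so is a coboundary. Hence $\theta_1,\ldots,\theta_r$ are linearly dependent in $H^2(J,K)$, and since change of basis in $V$ acts on them by an invertible linear transformation, the same holds for the original components.

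The main obstacle I anticipate is in the $(\Rightarrow)$ direction: establishing the clean decomposition $V=(V\cap I_1)\oplus I_2$ and then converting the closure of $I_1$ under the Jordan product into a coboundary relation precisely in the $I_2$-components. The $(\Leftarrow)$ direction reduces to routine bookkeeping with change of basis and cocycle representatives, once one recognizes that any single nontrivial $H^2$-relation can be converted into an identically vanishing component via a coordinated change of basis in $V$ and coboundary modification of $\theta$.
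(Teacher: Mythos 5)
Your proposal is correct and follows essentially the same route as the paper: in one direction you change basis in $V$ to make one component of $\theta$ a coboundary and split off $K\tilde e_r$ as a central component, and in the other you use $I_2\subseteq Z(J_\theta)=V$ to get $V=(V\cap I_1)\oplus I_2$ and show the $I_2$-components of $\theta$ are coboundaries. Your version is in fact tighter than the paper's at the two places it glosses over — the explicit cohomologous modification $\theta-\delta(f\cdot\tilde e_r)$ in the $(\Leftarrow)$ direction, and the section $s(x)=x+\phi(x)$ producing the coboundary $\delta(p_2\phi)$ in the $(\Rightarrow)$ direction, where the paper simply asserts $\theta=\psi$.
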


\begin{proof}
Suppose that $\theta _{1},\theta _{2},.....,\theta _{r}$ are linearly
dependent, then there exist a proper subset of $\theta _{1},\theta
_{2},.....,\theta _{r}$ span the same as $\theta _{i}.$ So we may assume
that that some of $\theta _{i}$ are zero , hence $J_{\theta }$ has a
centeral component .

On the other hand, Let $\theta (x,y)=\underset{i=1}{\overset{r=s+t}{\sum }}$ 
$\theta _{i}(x,y)e_{i}\in H^{2}(J,V)$ and $\theta ^{\bot }\cap Z(J)=0.$%
Suppose that $J_{\theta }=J\oplus V$ has a centeral component $B$ , and $B$
is contained in $V$ . Write $J_{\theta }=J\oplus W\oplus B$ , then $%
J_{\theta }$ can be considered as a trivial centeral extension of $\tilde{J}%
=J\oplus W$ by $B$ .The centre of $J_{\theta }$ is equal to $V=W\oplus B$
.Consider $\{e_{1},e_{2},.....,e_{s}\}$ and $\{e_{s+1},e_{s+2},.....,e_{t}\}$
are basis of $W\ $and $B$ respectively. Also $\tilde{J}=J\oplus W$ can be
considered as a centeral extension of $J$ by$W$ , then there exist a Jordan
cocycle $\psi (x,y)=\underset{i=1}{\overset{s}{\sum }}$ $\psi _{i}(x,y)e_{i}$
such that $\psi ^{\bot }\cap Z(J)=0\ $and $J_{\psi }=\tilde{J}=J\oplus W.$
Then $J_{\theta }$ is a centeral extension of $J_{\psi }$by $B$ with trivial
Jordan cocycle,, then 
\begin{equation*}
\theta (x,y)=\underset{i=1}{\overset{r=s+t}{\sum }}\theta
_{i}(x,y)e_{i}=\psi (x,y)=\underset{i=1}{\overset{s}{\sum }}\psi
_{i}(x,y)e_{i}
\end{equation*}%
Hence $\theta _{1},\theta _{2},.....,\theta _{r}$ are lineary dependent .
\end{proof}

\begin{equation*}
\end{equation*}

So to exclude centeral components, we must have $\theta _{i}$ to be linearly
independent .%
\begin{equation*}
\end{equation*}

A Jordan algebra $\tilde{J}$ is said to be a descendant of the Jordan
algebra $J$ if $\tilde{J}\diagup Z(\tilde{J})$ $\cong J$ and $Z(\tilde{J}%
)\leq $' $\tilde{J}^{2}$. If dim $Z(\tilde{J})$ $=r$ then $\tilde{J}$ is
also referred to as a step$-r$ descendant. A descendant of a nilpotent
Jordan algebra is nilpotent. Conversely, if $\tilde{J}$ is a
finite-dimensional nilpotent Jordan algebra over a field $K$, then by Lemma (%
\ref{h1}) $\tilde{J}$ is either a descendant of a smaller-dimensional
nilpotent Jordan algebra when $\tilde{J}$ has no centeral components, or $%
\tilde{J}$ $=J\oplus Kx$ where $J$ is an ideal of $\tilde{J}$ and $Kx$ is a
centeral component of $\tilde{J}$ ($Kx$ is a $1$-dimensional Vector space
viewed as trivial Jordan algebra).

All our previous observations can be summarized as follows:

\begin{theorem}
Let $J$ be a Lie algebra, let $V$be a vector space with fixed basis $%
\{e_{1},...,e_{r}\}$ over a field $K$, and let $\theta $, $\eta $ be
elements of $Z^{2}(J,V).$

\begin{enumerate}
\item The Jordan algebra $J_{\theta }$ is a step-$r$ descendant of $J$ if
and only if $\theta ^{\perp }$ $\cap Z(J)$ $=0$ and the image of the
subspace $<\theta _{1},...,\theta _{r}>$ in $H^{2}(J,K)$ is $r$-dimensional.

\item Suppose that $\eta $ is an other element of $Z^{2}(J,V)$ and that $%
J_{\theta },J_{\eta }$ are descendants of $J$. Then $J_{\theta }\cong
J_{\eta }$ if and only if images of the subspaces $<\theta _{1},...,\theta
_{r}>$and $<\eta _{1},...,\eta _{r}>$ in $H^{2}(J,K)$ are in the same orbit
under the action of $Aut(J)$.%
\begin{equation*}
\end{equation*}
\end{enumerate}
\end{theorem}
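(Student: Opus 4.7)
The plan is to prove both parts by directly assembling the lemmas developed earlier in the section, since each of the two clauses in the definition of a step-$r$ descendant matches exactly one of the two hypotheses on $\theta$, and the isomorphism criterion is essentially the preceding lemma restated in Grassmannian language.

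For part (1), I would split ``step-$r$ descendant of $J$'' into (i) $J_\theta / Z(J_\theta) \cong J$ together with $\dim Z(J_\theta) = r$, and (ii) $Z(J_\theta) \subseteq J_\theta^{\,2}$. For (i), the earlier lemma gives $Z(J_\theta) = (\theta^\perp \cap Z(J)) \oplus V$, and since $\dim V = r$, the condition $\theta^\perp \cap Z(J) = 0$ is equivalent to $Z(J_\theta) = V$; this simultaneously forces $\dim Z(J_\theta) = r$ and $J_\theta / Z(J_\theta) \cong J_\theta / V \cong J$. Conversely, if $J_\theta$ is a step-$r$ descendant of $J$, then $\dim Z(J_\theta) = r = \dim V$, so the same lemma forces $\theta^\perp \cap Z(J) = 0$.

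For (ii), I would appeal to the discussion immediately preceding the theorem, which identifies the condition $Z(\tilde J) \subseteq \tilde J^{\,2}$ with the absence of a central component, and then apply the earlier characterisation: $J_\theta$ has a central component if and only if $\theta_1, \dots, \theta_r$ are linearly dependent in $H^2(J,K)$. The images $\bar\theta_1, \dots, \bar\theta_r$ being linearly independent in $H^2(J,K)$ is precisely the statement that the image of $\langle \theta_1, \dots, \theta_r \rangle \subseteq Z^2(J,K)$ in the quotient $H^2(J,K)$ has dimension $r$. Combining (i) and (ii) yields part (1).

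For part (2), under the descendant hypotheses the assumption $\theta^\perp \cap Z(J) = \eta^\perp \cap Z(J) = 0$ holds, so the earlier isomorphism lemma applies and says that $J_\theta \cong J_\eta$ iff there exists $\phi \in Aut(J)$ such that $\{\phi \theta_1, \dots, \phi \theta_r\}$ and $\{\eta_1, \dots, \eta_r\}$ span the same subspace of $H^2(J,K)$. Because both spans are $r$-dimensional by part (1), this is precisely the statement that the images of $\langle \theta_1, \dots, \theta_r \rangle$ and $\langle \eta_1, \dots, \eta_r \rangle$ lie in the same $Aut(J)$-orbit on the Grassmannian of $r$-dimensional subspaces of $H^2(J,K)$. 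The main obstacle I anticipate is purely bookkeeping: keeping the correspondence between ``image in $H^2(J,K)$ is $r$-dimensional,'' ``the $\bar\theta_i$ are linearly independent modulo coboundaries,'' and ``no central component'' clean, and checking that the $Aut(J)$-action on classes transports correctly to the action on their spans. No new idea beyond the lemmas already at hand should be required.
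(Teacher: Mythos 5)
Your proposal is correct and matches the paper's treatment: the paper offers no separate argument for this theorem, explicitly presenting it as a summary of the preceding lemmas (the computation of $Z(J_{\theta})$, the central-component criterion via linear dependence of the $\theta_{i}$ in $H^{2}(J,K)$, and the isomorphism lemma), which is exactly the assembly you carry out. The only point worth stating explicitly when writing it up is that the central-component lemma is proved under the hypothesis $\theta^{\perp}\cap Z(J)=0$, so in the forward direction of part (1) you should derive that condition first before invoking it; your sketch already makes this available.
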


It follows that there is a one-to-one correspondence between the set of
isomorphism types of step-$r$ descendants of $J$ and the Aut($J$)-orbits on
the $r$-dimensional allowable subspaces of $H^{2}(J,K)$. Hence the
classification of $n$-dimensional nilpotent Jordan algebras requires that we
determine these orbits for all nilpotent Jordan algebras of dimension at
most $n-1$.

Let $G_{r}(H^{2}(J,K))$ be the Grassmanian of subspaces of dimension $r$ in $%
H^{2}(J,K)$ . There is anatural action of $Aut(J)$ on $G_{r}(H^{2}(J,K))$
defined by : 
\begin{equation*}
\phi <\theta _{1},\theta _{2},.....,\theta _{r}>=<\phi \theta _{1},\phi
\theta _{2},.....,\phi \theta _{r}>
\end{equation*}%
for $V=<$ $\theta _{1},\theta _{2},.....,\theta _{r}>\in G_{r}(H^{2}(J,K))$
and $\phi \in Aut(J)$ .

Note that if $\ \{\theta _{1},\theta _{2},.....,\theta _{r}\}$ is linear
independent so is$\{\phi \theta _{1},\phi \theta _{2},.....,\phi \theta
_{r}\}$.Define 
\begin{equation*}
U_{r}(J)=\{V=<\theta _{1},\theta _{2},.....,\theta _{r}>\in
G_{r}(H^{2}(J,K)):\theta _{i}^{\bot }\cap Z(J)=0;i=1,2,...,r\}.
\end{equation*}

\begin{lemma}
$U_{r}(J)$ is stable under the action of $Aut(J).$
\end{lemma}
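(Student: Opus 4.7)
The plan is to take an arbitrary $\phi\in Aut(J)$ and an arbitrary $V=\langle\theta_{1},\ldots,\theta_{r}\rangle\in U_{r}(J)$ and verify that $\phi V=\langle\phi\theta_{1},\ldots,\phi\theta_{r}\rangle$ again lies in $U_{r}(J)$. This requires two things: (a) that $\phi V$ is still $r$-dimensional in $H^{2}(J,K)$, and (b) that $(\phi\theta_{i})^{\bot}\cap Z(J)=0$ for each $i$. Before starting I would note that both conditions are well defined on cohomology classes: replacing $\theta_{i}$ by $\theta_{i}+\delta f$ does not change $\theta_{i}^{\bot}\cap Z(J)$, since any $x\in Z(J)$ satisfies $(\delta f)(x,y)=f(x\circ y)=0$; and the identity $\phi(\delta f)=\delta(f\circ\phi)$, which is immediate from $\phi$ being an algebra automorphism, guarantees that $\theta\mapsto\phi\theta$ descends to a map $H^{2}(J,K)\to H^{2}(J,K)$.

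For (a), I would observe that $(\phi\theta)(x,y):=\theta(\phi x,\phi y)$ is linear in $\theta$, so its descent to $H^{2}(J,K)$ is linear, and applying the same construction to $\phi^{-1}$ produces a two-sided inverse. Thus $\phi$ acts as a linear automorphism of $H^{2}(J,K)$ and preserves linear independence, so $\phi\theta_{1},\ldots,\phi\theta_{r}$ remain independent in $H^{2}(J,K)$ and $\dim\phi V=r$.

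For (b), I would first record the standard fact that any algebra automorphism fixes the centre setwise: if $z\in Z(J)$ then $\phi(z)\circ y=\phi(z\circ\phi^{-1}(y))=\phi(0)=0$ for every $y\in J$, hence $\phi(Z(J))\subseteq Z(J)$, and symmetry gives $\phi(Z(J))=Z(J)$. Next I would compute $(\phi\theta)^{\bot}=\phi^{-1}(\theta^{\bot})$: indeed $x\in(\phi\theta)^{\bot}$ iff $\theta(\phi x,\phi y)=0$ for every $y\in J$, which, by bijectivity of $\phi$, is equivalent to $\theta(\phi x,z)=0$ for every $z\in J$, i.e.\ to $\phi x\in\theta^{\bot}$. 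Combining these,
\[
(\phi\theta_{i})^{\bot}\cap Z(J)=\phi^{-1}(\theta_{i}^{\bot})\cap\phi^{-1}(Z(J))=\phi^{-1}\bigl(\theta_{i}^{\bot}\cap Z(J)\bigr)=\phi^{-1}(0)=0,
\]
the last equality using $V\in U_{r}(J)$.

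The main obstacle is essentially bookkeeping rather than a conceptual difficulty: keeping the direction of $\phi$ versus $\phi^{-1}$ straight in the perp identity, and confirming that the two defining conditions of $U_{r}(J)$ do descend to $H^{2}(J,K)$ so that the $Aut(J)$-action on the Grassmannian $G_{r}(H^{2}(J,K))$ is well defined on $U_{r}(J)$ in the first place. Once these well-definedness points are cleared, the stability is a one-line computation.
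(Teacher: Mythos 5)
Your proof is correct and follows essentially the same route as the paper: the key identities $(\phi\theta_{i})^{\bot}=\phi^{-1}(\theta_{i}^{\bot})$ and $\phi(Z(J))=Z(J)$, combined to give $(\phi\theta_{i})^{\bot}\cap Z(J)=\phi^{-1}(\theta_{i}^{\bot}\cap Z(J))=0$, are exactly the paper's argument. The additional well-definedness checks (that the action descends to $H^{2}(J,K)$ and preserves linear independence) are points the paper establishes in the surrounding discussion rather than inside this proof, so including them is harmless and slightly more complete.
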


\begin{proof}
Let $\phi \in Aut(J)$ and $V=<\theta _{1},\theta _{2},.....,\theta _{r}>\in
U_{r}(J).$ Let $x\in (\phi \theta _{i})^{\perp }=\{y\in J:\phi \theta
_{i}(y,J)=0\}$ then $\phi (x)\in \theta _{i}^{\perp }$ hence $x\in \phi
^{-1}\theta _{i}^{\perp }=\{\phi ^{-1}(y):y\in \theta _{i}^{\perp }\}$ .
Also let $\phi ^{-1}(x)\in \phi ^{-1}\theta _{i}^{\perp }$ then $\phi \theta
_{i}(\phi ^{-1}(x),J)=\theta _{i}(x,J)=0$ hence $\phi ^{-1}(x)\in (\phi
\theta _{i})^{\perp }.$ Then $(\phi \theta _{i})^{\perp }=\phi ^{-1}\theta
_{i}^{\perp }$ and $\phi ^{-1}(Z(J))=Z(J).$ Therefore $(\phi \theta
_{i})^{\bot }\cap Z(J)=\phi ^{-1}(\theta _{i}^{\bot }\cap Z(J))=0$ then $%
\phi V\in U_{r}(J).$ \ \ 
\end{proof}

\begin{equation*}
\end{equation*}

Let $U_{r}(J)\diagup Aut(J)$ be the set of $Aut(J)$-orbits of $U_{r}(J).$
Then there exists a canonical one-to-one correspondense from $%
U_{r}(J)\diagup Aut(J)$ \ onto the set of isomorphism classes of Jordan
algebras without centeral components which are central extensions of $J$ by $%
V$ and have $r$-dimensional center where $r=\dim V$.%
\begin{equation*}
\end{equation*}

All our previous observations can be summarized to have an analogue of the 
\textit{Skejelbred-Sund theorem} for Jordan algebras as follows:

\begin{theorem}
\label{h2}Let $J$ be a Jordan algebra over a field $K$.The isomorphism
clases of Jordan algebras $\tilde{J}$ with centre $V$ of dimension $r$ ,$%
\tilde{J}\diagup V\cong J$\ and without centeral component are in bijective
correspondence with the elements in $U_{r}(J)\diagup Aut(J).$%
\begin{equation*}
\end{equation*}
\end{theorem}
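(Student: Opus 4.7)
The strategy is to assemble the preceding lemmas into a bijection. Define a map
\begin{equation*}
\Phi : U_{r}(J)\diagup Aut(J)\longrightarrow \{\text{iso. classes of }\tilde{J}:\dim Z(\tilde{J})=r,\ \tilde{J}/Z(\tilde{J})\cong J,\ \tilde{J}\text{ has no central component}\}
\end{equation*}
as follows. Given an orbit $[W]$, pick a representative $W=\langle \theta_{1},\ldots,\theta_{r}\rangle$ and a basis $\{e_{1},\ldots,e_{r}\}$ of $V$, assemble $\theta(x,y)=\sum_{i}\theta_{i}(x,y)e_{i}\in Z^{2}(J,V)$, and send $[W]$ to the isomorphism class of $J_{\theta}$.

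The first task is to verify that $\Phi$ takes values in the target set and is well-defined. Because $W\in U_{r}(J)$ the intersection $\theta^{\perp}\cap Z(J)=0$, so by the lemma computing $Z(J_{\theta})=(\theta^{\perp}\cap Z(J))\oplus V$ we get $Z(J_{\theta})=V$, and hence $J_{\theta}/Z(J_{\theta})\cong J$. Linear independence of $\theta_{1},\ldots,\theta_{r}$ in $H^{2}(J,K)$, together with the lemma on central components, ensures that $J_{\theta}$ has no central component. Independence of the chosen basis of $W$ and of the chosen basis of $V$ reduces to replacing $\theta$ by $\psi\theta$ for some $\psi\in Gl(V)$ or by a cohomologous cocycle; in both cases equation~(\ref{k}) together with the lemma that cohomologous cocycles give equivalent (hence isomorphic) extensions shows that the class of $J_{\theta}$ is unchanged. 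Independence of the choice of orbit representative is exactly the content of the lemma preceding Lemma~\ref{n}: if $\phi W=W'$ for $\phi\in Aut(J)$, then $J_{\theta}\cong J_{\eta}$.

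For surjectivity, let $\tilde{J}$ be a Jordan algebra in the target set. By Lemma~\ref{h1}, applied with $Z(\tilde{J})=V$, there is a $2$-cocycle $\theta\in Z^{2}(J,V)$ with $\tilde{J}\cong J_{\theta}$. Writing $\theta=\sum_{i}\theta_{i}e_{i}$, the equality $Z(J_{\theta})=V$ forces $\theta^{\perp}\cap Z(J)=0$ by the same lemma on $Z(J_{\theta})$, and the absence of a central component forces $\theta_{1},\ldots,\theta_{r}$ to be linearly independent in $H^{2}(J,K)$ by the central-component lemma. Hence $W=\langle\theta_{1},\ldots,\theta_{r}\rangle\in U_{r}(J)$ and $\Phi([W])=[\tilde{J}]$. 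For injectivity, suppose $J_{\theta}\cong J_{\eta}$ with $W=\langle\theta_{1},\ldots,\theta_{r}\rangle$ and $W'=\langle\eta_{1},\ldots,\eta_{r}\rangle$ both in $U_{r}(J)$. The lemma preceding Lemma~\ref{n} (derived via the matrix description of the isomorphism $\alpha$ in terms of $\alpha_{0},\varphi,\psi$) gives $\phi\in Aut(J)$ with $\phi\theta_{i}\equiv\sum_{j}a_{ji}\eta_{j}\pmod{\delta C^{1}(J,K)}$, i.e.\ $\phi W=W'$ in $H^{2}(J,K)$. Thus $[W]=[W']$ in $U_{r}(J)\diagup Aut(J)$.

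The only place where genuine care is needed is the well-definedness step: one must keep track simultaneously of (i) the freedom to change basis inside $W$, (ii) the freedom to change basis of $V$ (the role of $\psi\in Gl(V)$), and (iii) the freedom to modify $\theta$ by a coboundary (the role of $\varphi\in Hom(J,V)$). All three freedoms are precisely what the matrix form of $\alpha$ in Lemma~\ref{n} records, and reading equation~(\ref{k}) as an equality of subspaces of $H^{2}(J,K)$ closes the loop between Grassmannian orbits and isomorphism classes.
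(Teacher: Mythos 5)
Your proposal is correct and follows exactly the route the paper intends: the paper gives no separate argument for Theorem~\ref{h2}, stating only that it ``summarizes all previous observations,'' and your write-up is precisely that assembly made explicit --- well-definedness via the lemma $Z(J_{\theta})=(\theta ^{\perp }\cap Z(J))\oplus V$ and the cohomologous-cocycle lemma, surjectivity via Lemma~\ref{h1} and the central-component lemma, and injectivity via the lemma preceding Lemma~\ref{n} together with equation~(\ref{k}). The only remark worth adding is that you correctly use the basis-independent condition $\theta ^{\perp }\cap Z(J)=0$ (the ``allowable'' condition), which is what the argument actually needs, rather than the stronger per-generator condition $\theta _{i}^{\perp }\cap Z(J)=0$ appearing in the paper's displayed definition of $U_{r}(J)$.
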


By this theorem, we may construct all nilpotent Jordan algebras of dimension 
$n$, given those algebras of dimension less than $n$, by centeral extension.

\section{Constructing nilpotent Jordan algebras}

In this section we carry out a procudure for constructing nilpotent Jordan
algebras over any arbitrary field.%
\begin{equation*}
\end{equation*}

\underline{\underline{\textbf{The classification procedure :}}}

\bigskip

Let $J$ be a Jordan algebras with basis $e_{1},e_{2},....,e_{n-r}$ then the
dual basis of the space of all symmetric bilinear forms $\theta :J\times
J\longrightarrow K$ are $\sum_{e_{i},e_{j}}$ $:n-r\geq i\geq j=1$\ with $%
\sum_{e_{i},e_{j}}(e_{l},e_{m})=\sum_{e_{i},e_{j}}(e_{m},e_{l})=1$ if $i=l$
and $j=m$ ,and takes the value $0$ otherwise, then any $2$-cocycle $\theta
\in $\ $Z^{2}(J,K)$ can be represented by $\theta =\underset{i\geq j=1}{%
\overset{n-r}{\sum }}c_{ij}\sum_{e_{i},e_{j}}$ such that the elements $%
c_{i,j}$ satisfy Jordan identity for $2-$cocycles . By Theorem (\ref{h2})\
we have a procdure that takes as input a Jordan algebra $J$ of dimension $%
n-r $ it outputs all nilpotent Jordan algebras $\tilde{J}$ of dimension $n$
such that $\tilde{J}\diagup Z($ $\tilde{J})\cong J$, and $\tilde{J}$ has no
centeral components. It runs as follows :

\begin{itemize}
\item For a given nilpotent Jordan algebra $J$ of dimension $n-r$, we list
at first its center to help us identify the $2$-cocycles satisfying $\theta
^{\perp }$ $\cap Z(J)$ $=0.$

\item Compute $Z^{2}(J,K)$: When computing the $2$-cocycles just list all
the constraints on the elements $c_{ij}.$

\item Compute $\delta C^{1}(J,K):$ For $e_{l}\in J^{2}$ define $\theta _{l}=%
\underset{i,j}{\sum }\left( \sum_{e_{i},e_{j}}\right) $ such that $%
e_{i}\circ e_{j}=e_{l}.$ Then $\delta C^{1}(J,K)$ is spanned by $\theta _{l}$
for all $e_{l}\in J^{2},$

\item Compute $H^{2}(J,K):$ The complement of $\delta C^{1}(J,K)$ in $%
Z^{2}(J,K)$ be $H^{2}(J,K).$ For $\theta =\underset{i\geq j=1}{\overset{n-r}{%
\sum }}c_{i,j}\sum_{e_{i},e_{j}}\in $ $Z^{2}(J,K)$ put $c_{i,j}=0$ for all $%
\sum_{e_{i},e_{j}}=\theta _{l}\in $ $\delta C^{1}(J,K).$

\item Consider $\theta \in H^{2}(J,V)$ with $\theta (x,y)=\overset{r}{%
\underset{i=1}{\sum }}\theta _{i}(x,y)e_{n-r+i}$ where the $\theta _{i}\in
H^{2}(J,K)$ are linearly indpendent, and $\theta ^{\perp }$ $\cap Z(J)$ $=0.$
Find a list of representatives of the orbits of $Aut(J)$ acting on the $%
\theta $.

\item For each $\theta $ found, construct $J_{\theta }$. Discard the
isomorphic ones.%
\begin{equation*}
\end{equation*}
\end{itemize}

\textbf{Remarks :}

\begin{enumerate}
\item If charcteristic $K\neq 2,$ one can linearize $\theta (x^{2},x\circ
y)=\theta (x,x^{2}\circ y)$ to obtain :%
\begin{equation*}
\theta (x,v\circ (y\circ z))+\theta (y,v\circ (x\circ z))+\theta (z,v\circ
(x\circ y))=\theta (x\circ y,z\circ v)+\theta (y\circ z,x\circ v)+\theta
(x\circ z,y\circ v)
\end{equation*}%
for all $x,y,z,v\in J.$ 

\item One can use Lemma $\left( \ref{n}\right) $ to compute the automorphism
group,and the system of equations $\left( \ref{k}\right) $for testing if two
cocycles are in the same $Aut(J)$-orbit by checking solvalbality. However in
the next section we construct an algorithm for testing isomorphism of any
two Jordan algebra, by this algorithm we can also compute the automorphism
group.

\item The procedure only gives those nilpotent Jordan algebras without
cetral components. So we have to add the nilpotent Jordan algebras obtained
by by taking direct sum of a smaller dimensional nilpotent Jordan algebra
with trivial Jordan algebra (that has trivial multiplication).
\end{enumerate}

\section{Deciding isomorphism of Jordan algebras}

A classical problem is to know how many different (up to isomorphisms)
finite dimensional Jordan algebras exist for each dimension. Let $J$ be a
Jordan algebra of dimension $n$, then it has a basis $%
e_{1},e_{2},....,e_{n}. $ It follows that there are constants $c_{ij}^{k}$
for $n\geq i\geq j=1$ such that%
\begin{equation*}
e_{i}\circ e_{j}=\underset{k=1}{\overset{n}{\sum }}c_{ij}^{k}e_{k}.
\end{equation*}%
Let $J_{1}$ and $J_{2}$ be two Jordan algebras for which we want to decide
whether they are isomorphic or not. If the dimension of $J_{1}$ and $J_{2}$
are not equal, then $J_{1}$ and $J_{2}$ can never be isomorphic.
Furthermore, suppose that $\phi :J_{1}\longrightarrow J_{2}$ is an
isomorphism of Jordan algebras, then $\phi (Z(J_{1}))=Z(J_{2})$ and $\phi
(C^{k}(J_{1}))=C^{k}(J_{2}).$ So if $\dim Z(J_{1})\neq \dim Z(J_{2})$ then $%
J_{1}$ and $J_{2}$ can never be isomorphic. Also if $\dim C^{k}(J_{1})\neq
\dim C^{k}(J_{2})$ \ hen $J_{1}$ and $J_{2}$ can never be isomorphic. By
this way we may able to decide that $J_{1}$ and $J_{2}$ are not isomorphic.
There is a direct method for testing isomorphism of Jordan algebras by using
Grobner bases. Let $e_{1},e_{2},....,e_{n}$ be a basis of $J_{1}$ and let $%
\tilde{e}_{1},\tilde{e}_{2},....,\tilde{e}_{n}$ be a basis of $J_{2}.$ Let $%
(c_{ij}^{k})$ and $(\gamma _{ij}^{k})$ be the structure constants of $J_{1}$
and $J_{2}$, respectively. A map $\phi :J_{1}\longrightarrow J_{2}$ is an
isomorphism of Jordan algebras if and only if it satisfies the following
requirements :

\begin{itemize}
\item $\phi (e_{i}\circ e_{j})=\phi (e_{i})\circ \phi (e_{j})$ \ for $n\geq
i\geq j=1,$

\item $\phi $ is non-singular.
\end{itemize}

Let $\phi :J_{1}\longrightarrow J_{2}$ is an isomorphism of Jordan algebras
given by $\phi (e_{i})=\overset{n}{\underset{j=1}{\sum }}a_{ij}\tilde{e}_{j}$
then 
\begin{equation*}
\phi (e_{i}\circ e_{j})=\phi \left( \underset{k=1}{\overset{n}{\sum }}%
c_{ij}^{k}e_{k}\right) =\underset{k=1}{\overset{n}{\sum }}c_{ij}^{k}\phi
(e_{k})=\underset{k,m=1}{\overset{n}{\sum }}c_{ij}^{k}a_{km}\tilde{e}_{m},
\end{equation*}%
and%
\begin{eqnarray*}
\phi (e_{i})\circ \phi (e_{j}) &=&\left( \overset{n}{\underset{k=1}{\sum }}%
a_{ik}\tilde{e}_{k}\right) \circ \left( \overset{n}{\underset{l=1}{\sum }}%
a_{jl}\tilde{e}_{l}\right) \\
&=&\overset{n}{\underset{k,l=1}{\sum }}a_{ik}a_{jl}(\tilde{e}_{k}\circ 
\tilde{e}_{l})=\overset{n}{\underset{k,l,m=1}{\sum }}\gamma
_{kl}^{m}a_{ik}a_{jl}\tilde{e}_{m}.
\end{eqnarray*}%
Hence the first requirement amounts to the following $\frac{n(n+1)}{2}$
equations in the variables $a_{ij}$ :%
\begin{equation*}
\underset{k=1}{\overset{n}{\sum }}c_{ij}^{k}a_{km}-\overset{n}{\underset{%
k,l=1}{\sum }}\gamma _{kl}^{m}a_{ik}a_{jl}=0\text{ for }n\geq i\geq j=1\text{
and }1\leq m\leq n.
\end{equation*}%
By the second erquirement the determinant of the matrix $(a_{ij})$ not equal
to $0$, $\det (a_{ij})\neq 0.$

\begin{theorem}
(\textbf{Generalized Weak Nullstellensatz}). Let $k$ be any field. A system
of polynomials $f_{1},...,f_{m}$ $\in $ $k[x1,....,xn]$has no common zero
over the algebraic closure of $k$, if and only if, $1\in I$ where $I$ be the
ideal generated by the polynomials $f_{1},...,f_{m}.$
\end{theorem}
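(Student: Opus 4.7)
The plan is to reduce the statement to the classical Weak Nullstellensatz over an algebraically closed field, and then to transfer the resulting ideal identity from $\bar{k}[x_1,\ldots,x_n]$ down to $k[x_1,\ldots,x_n]$ by a linear algebra argument. Let $\bar{k}$ denote a fixed algebraic closure of $k$, and let $\bar{I} = I\cdot \bar{k}[x_1,\ldots,x_n]$ be the ideal generated in $\bar{k}[x_1,\ldots,x_n]$ by the same polynomials $f_1,\ldots,f_m$.

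First I would observe that, almost by definition, the set of common zeros of $f_1,\ldots,f_m$ in $\bar{k}^n$ coincides with the zero set of $\bar{I}$. The classical Weak Nullstellensatz, applied to the algebraically closed field $\bar{k}$, states that this zero set is empty if and only if $\bar{I}$ is the whole ring $\bar{k}[x_1,\ldots,x_n]$, equivalently $1\in\bar{I}$. So it remains to prove the purely algebraic equivalence
\begin{equation*}
1\in I \quad\Longleftrightarrow\quad 1\in \bar{I}.
\end{equation*}
The forward implication is obvious. The nontrivial direction is the descent from $\bar{k}$ to $k$.

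For that descent, I would argue as follows. If $1\in\bar{I}$ there exist $g_1,\ldots,g_m\in\bar{k}[x_1,\ldots,x_n]$ with $1=\sum g_i f_i$. Let $d$ be an upper bound on the degrees of all $g_i f_i$ appearing. The condition ``$1$ lies in the $k$-span of the products $x^\alpha f_i$ with $|\alpha|+\deg f_i\le d$'' is a finite system of linear equations in the coefficients of the unknown $g_i$, with all coefficients in $k$: one equation per monomial of degree $\le d$. By hypothesis this system has a solution over $\bar{k}$. A standard fact of linear algebra (a $k$-linear system that is solvable over a field extension is already solvable over $k$, obtained for instance by choosing a $k$-basis of $\bar{k}$ and comparing coordinates) then yields a solution over $k$, producing a representation $1=\sum h_i f_i$ with $h_i\in k[x_1,\ldots,x_n]$. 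Hence $1\in I$.

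The only potentially delicate point is this last descent step, and it is really just elementary linear algebra once one recasts the membership condition on a fixed degree truncation as a matrix equation with entries in $k$; no deep commutative algebra (faithful flatness, Noether normalization, etc.) beyond what the classical Weak Nullstellensatz already requires is needed. Combining the two directions gives the stated equivalence for arbitrary $k$.
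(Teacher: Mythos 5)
Your argument is correct. Note first that the paper itself offers no proof of this statement: it is quoted as a known result (it appears verbatim in the Cox--Little--O'Shea reference in the bibliography) and used immediately afterwards to justify the Gr\"obner-basis isomorphism test, so there is no ``paper proof'' to compare against. Your two-step reduction is the standard one and is sound: the classical Weak Nullstellensatz over $\bar{k}$ handles the geometric half, and the only content specific to a non-closed $k$ is the descent $1\in\bar{I}\Rightarrow 1\in I$. Your linear-algebra justification of that descent works; to make it airtight you should either invoke the fact that the rank of a matrix with entries in $k$ does not change under field extension (so solvability of $Ax=b$ with $A,b$ over $k$ is insensitive to the field in which $x$ is sought), or phrase the basis argument as follows: choose a $k$-linear complement $W$ with $\bar{k}=k\oplus W$, decompose each unknown coefficient as $c=c_{0}+c_{1}$ with $c_{0}\in k$, $c_{1}\in W$, and observe that since the system's coefficients and right-hand side lie in $k$, the $k$-components $c_{0}$ already satisfy the system. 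Either way the step is elementary, and no faithful flatness or Noether normalization is needed, as you say. An equally common alternative you could have used is to note that $I=\bar{I}\cap k[x_{1},\ldots,x_{n}]$ because $\bar{k}$ is faithfully flat (indeed free) over $k$, but your truncated-degree linear system is more self-contained.
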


\bigskip

So we consider the ideal $I$ generated by the polynomials 
\begin{equation*}
\underset{k=1}{\overset{n}{\sum }}c_{ij}^{k}a_{km}-\overset{n}{\underset{%
k,l=1}{\sum }}\gamma _{kl}^{m}a_{ik}a_{jl}=0\text{ for }n\geq i\geq j=1\text{
and }1\leq m\leq n.
\end{equation*}%
together with the polynomial $b\det (a_{ij})-1$ in the polynomial ring $%
k[a_{ij},b].$ Then there is a solution of this system if and only if $I\neq
k[a_{ij},b].$ The algorithm for calculating Grobener bases yields a method
for deciding whether or not $1\in I.$ Then by Grobener bases calculations we
can decide isomorphism of Jordan algebras.

\begin{algorithm}
\textbf{Testing isomorphism for Jordan algebras}.

Input : Two Jordan algebras $(J_{1},\circ _{1})$ with basis $%
e_{1},e_{2},....,e_{n}$\ and $(J_{2},\circ _{2})$\ with basis $\tilde{e}_{1},%
\tilde{e}_{2},....,\tilde{e}_{n}.$

Output ; True if $J_{1}\cong J_{2}$ and false in other case.

\begin{enumerate}
\item Compute the following system of equations : 
\begin{equation*}
\phi (e_{i}\circ _{1}e_{j})-\phi (e_{i})\circ _{2}\phi (e_{j})=0\ \text{for }%
n\geq i\geq j=1.
\end{equation*}%
where $\phi =(a_{ij})_{n\times n}$

\item To ensure that $\phi $ is going to be non-singular, add the following
relation with a new variable $b$ :%
\begin{equation*}
b\det \phi -1=0.
\end{equation*}

\item Compute a grobener bases $G$ of the ideal $I=<\{\phi (e_{i}\circ
_{1}e_{j})-\phi (e_{i})\circ _{2}\phi (e_{j})\}_{n\geq i\geq j=1}\cup
\{b\det \phi -1\}>$ in the polynomial ring $k[a_{ij},b].$

\item $G=\{1\}?$

\begin{enumerate}
\item Yes.

Return False.

\item No.

Output : True ; Return $G$.
\end{enumerate}
\end{enumerate}
\end{algorithm}

If the output of the algorithm is true, then it provides the equations of an
algebraic variety whose points are all possiple values for $\phi ,$ hence we
have a descripiton of the automorphism group in the case $J_{1}=J_{2}$ .

\section{Nilpotent Jordan algebras of dimension$\leq 4$}

In this section we use the analouge \textbf{Skjelbred-Sund method} to
classify nilpotent Jordan algebras of dimension $\leq 3$ over any field, and
four dimensional nilpotent Jordan algebras in the following cases :

\begin{itemize}
\item Over algebraic closed field $K$ and $ch(K)\neq 2.$

\item Over the real field $%
\mathbb{R}
.$
\end{itemize}

Here we denote the $j$-th algebra of dimension $i$ by $J_{i,j}$ .

\subsection{Nilpotent Jordan algebras of dimension $1$}

Let $J$ be a Jordan algebra of dimension $1$ spanned by a. If $J$ is
nilpotent then $Z(J)$ is non-trivial. Hence $Z(J)$ is spanned by $a$. It
follows that there is only one nilpotent Jordan algebra of dimension $1$
over any arbitrary field,$J_{1,1},$ it spanned by $a$ and $a^{2}=0$.

\begin{equation*}
\begin{tabular}{|l|l|}
\hline
$J_{1,1}$ & All multiplications are zero. \\ \hline
\end{tabular}%
\end{equation*}

\subsection{Nilpotent Jordan algebras of dimension $2$}

To construct all nilpotent Jordan algebras of dimension $2,$ firstly we
consider the trivial $1$-dimensional Central extension of the nilpotent
Jordan algebras of dimension $1$ corresponding to $\theta =0$ ( Algebras
that are direct sum of an algebra of dimension $1$ and a $1$-dimensional
trivial Jordan algebra, isomorphic to $J_{1,1}$) to get nilpotent Jordan
algebras with \textit{centeral component} . Finally consider non trivial $1$%
- dimensional Central extension of the nilpotent Jordan algebras of
dimension $1$ to get nilpotent Jordan algebras without \textit{centeral
component}.

\subsubsection{Nilpotent Jordan algebras with Centeral component}

\begin{enumerate}
\item \textbf{1- dimensional Trivial Central extension of }$J_{1,1}$\textbf{%
\ corresponding to }$\theta =0.$

\begin{enumerate}
\item We get the Jordan algebra :%
\begin{equation*}
\begin{tabular}{|l|l|}
\hline
$J_{2,1}=J_{1,1}\oplus J_{1,1}$ & All multiplications are zero. \\ \hline
\end{tabular}%
\end{equation*}
\end{enumerate}
\end{enumerate}

\subsubsection{Nilpotent Jordan algebras without Centeral component}

\begin{enumerate}
\item 1\textbf{- dimensional Central extension of }$J_{1,1}$

Here we get that $Z^{2}(J_{1,1},K)$ is spanned by $\sum_{a,a}$ and $\delta
C^{1}(J_{1,1},K)=0$, then $H^{2}(J_{1,1},K)$ is spanned by $\sum_{a,a}.$ So
we get only one cocycle $\theta =\sum_{a,a}$, yielding the algebra :%
\begin{equation*}
\begin{tabular}{|l|l|}
\hline
$J_{2,2}$ & $a^{2}=b.$ \\ \hline
\end{tabular}%
\end{equation*}
\end{enumerate}

\begin{theorem}
Up to isomorphism there exactly two nilpotent Jordan algebras of dimension $%
2 $ over any field $K$ which are isomorphic to the following nilpotent
Jordan algebras :%
\begin{equation*}
\begin{tabular}{|l|}
\hline
\textbf{Nilpotent Jordan algebras with Centeral component} \\ \hline
$J_{2,1}=J_{1,1}\oplus J_{1,1}$. \\ \hline
\textbf{Nilpotent Jordan algebras without Centeral component} \\ \hline
$J_{2,2}$ $:$ $a^{2}=b.$ \\ \hline
\end{tabular}%
\end{equation*}
\end{theorem}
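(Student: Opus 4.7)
The plan is to apply the Skjelbred--Sund machinery of Theorem \ref{h2} with base algebra $J=J_{1,1}$, combined with the dichotomy (recorded just before Theorem \ref{h2}) between nilpotent Jordan algebras with a central component and those without.

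First I would invoke Lemma \ref{h1}: any $2$-dimensional nilpotent Jordan algebra $\tilde J$ has non-trivial centre and is therefore a central extension of a strictly smaller nilpotent Jordan algebra. The only candidate base is the unique $1$-dimensional nilpotent Jordan algebra $J_{1,1}$ (trivial multiplication). Hence $\tilde J$ is either (i) a direct sum $J_{1,1}\oplus Kx$ with $Kx$ a central component, which forces $\tilde J\cong J_{1,1}\oplus J_{1,1}=J_{2,1}$, or (ii) a step-$1$ descendant of $J_{1,1}$ without central component, to be classified by Theorem \ref{h2}.

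For case (ii) I would run the classification procedure of Section 5 on $J_{1,1}=\langle a\rangle$. Since $a^{2}=0$, both sides of the cocycle condition $\theta(x^{2},x\circ y)=\theta(x,x^{2}\circ y)$ vanish identically, so every symmetric bilinear form on $J_{1,1}$ is automatically a $2$-cocycle; thus $Z^{2}(J_{1,1},K)=K\cdot\sum_{a,a}$ is $1$-dimensional. Because $J_{1,1}^{2}=0$, the lemma $\dim\delta C^{1}(J,K)=\dim J^{2}$ gives $\delta C^{1}(J_{1,1},K)=0$, and consequently $H^{2}(J_{1,1},K)=K\cdot\sum_{a,a}$. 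The only non-zero class $\theta=\sum_{a,a}$ satisfies $\theta(a,a)=1$, so $\theta^{\perp}=0$ and in particular $\theta^{\perp}\cap Z(J_{1,1})=0$; hence $\theta$ is allowable, and the associated extension $(J_{1,1})_{\theta}$ has basis $\{a,b\}$ with $a\circ a=b$, which is precisely $J_{2,2}$.

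Finally, I would observe that the $Aut(J_{1,1})$-orbit analysis is trivial here: since $H^{2}(J_{1,1},K)$ is already $1$-dimensional, there is a unique allowable $1$-dimensional subspace, so Theorem \ref{h2} yields $J_{2,2}$ as the unique isomorphism class in case (ii). Non-isomorphism of $J_{2,1}$ and $J_{2,2}$ is immediate from $J_{2,1}^{2}=0\neq J_{2,2}^{2}$. The only point that warrants any real care is the verification that the cocycle condition collapses in dimension one so that $Z^{2}(J_{1,1},K)$ is as small as claimed; after that step everything is pure bookkeeping within the framework already established.
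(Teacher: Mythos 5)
Your proposal is correct and follows essentially the same route as the paper: reduce via Lemma \ref{h1} to central extensions of $J_{1,1}$, split into the central-component case (giving $J_{2,1}$) and the step-$1$ descendant case, and compute $Z^{2}(J_{1,1},K)=\langle\sum_{a,a}\rangle$, $\delta C^{1}(J_{1,1},K)=0$, hence $H^{2}(J_{1,1},K)=\langle\sum_{a,a}\rangle$ with the single allowable cocycle yielding $J_{2,2}$. Your added checks (that the cocycle identity collapses in dimension one, that $\theta^{\perp}=0$, and that $J_{2,1}\not\cong J_{2,2}$ since $J_{2,1}^{2}=0\neq J_{2,2}^{2}$) are details the paper leaves implicit but are consistent with its argument.
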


\subsection{Nilpotent Jordan algebras of dimension $3$}

From the previous subsection we get that the $2$-dimensional nilpotent
Jordan algebras over any field $K$ are : $J_{2,1}$ and $J_{2,2}.$ Using
those algebra to construct $3$-dimensional nilpotent Jordan algebras as
follow :

\subsubsection{Nilpotent Jordan algebras with Centeral component}

\begin{enumerate}
\item \textbf{\ 1- dimensional Trivial Central extension of }$J_{2,1}$%
\textbf{\ corresponding to }$\theta =0.$

We get the Jordan algebra :%
\begin{equation*}
\begin{tabular}{|l|l|}
\hline
$J_{3,1}=J_{2,1}\oplus J_{1,1}$ & All multiplications are zero. \\ \hline
\end{tabular}%
\end{equation*}

\item 1\textbf{- dimensional Trivial Central extension of }$J_{2,2}$\textbf{%
\ corresponding to }$\theta =0.$
\end{enumerate}

We get the Jordan algebra :%
\begin{equation*}
\begin{tabular}{|l|l|}
\hline
$J_{3,2}=J_{2,2}\oplus J_{1,1}$ & $a^{2}=b$. \\ \hline
\end{tabular}%
\end{equation*}

\subsubsection{Nilpotent Jordan algebras without Centeral component}

\begin{enumerate}
\item 1\textbf{- dimensional Central extension of }$J_{2,1}$\textbf{\ }

Here we get that $Z^{2}(J_{2,1},K)$ is spanned by $\sum_{a,a}$,$\sum_{a,b}$
and $\sum_{b,b},$ also we get that $\delta C^{1}(J_{2,1},K)=0$. Then $%
H^{2}(J_{2,1},K)$ is consists of $\theta =\alpha _{1}\sum_{a,a}+\alpha
_{2}\sum_{a,b}+\alpha _{3}\sum_{b,b}.$ The centre of $J_{2,1},Z(J_{2,1}),$
spanned by $a$ and $b$, it follows that $\theta :J_{2,1}\times
J_{2,1}\longrightarrow K$ such that%
\begin{equation*}
\theta ^{\perp }\cap Z(J_{2,1})=0
\end{equation*}%
is nondegenerate since $\theta ^{\perp }=0.$ The autommorphism group
consists of $:$%
\begin{equation*}
\phi =\left( 
\begin{array}{cc}
a_{11} & a_{12} \\ 
a_{21} & a_{22}%
\end{array}%
\right) ,\det \phi \neq 0.
\end{equation*}

\underline{Characteristic $K\neq 2$}

Every symmetric bilinear form is diagonalizable, so we may assume that $%
\theta =\sum_{a,a}+\alpha \sum_{b,b}$ and $\alpha \neq 0.$ Write $\phi
\theta =\alpha _{1}^{\prime }\sum_{a,a}+\alpha _{2}^{\prime
}\sum_{a,b}+\beta \sum_{b,b},$ then 
\begin{eqnarray*}
\alpha _{1}^{\prime } &=&a_{11}^{2}+\alpha a_{21}^{2} \\
\alpha _{2}^{\prime } &=&a_{11}a_{12}+\alpha a_{21}a_{22} \\
\beta &=&a_{12}^{2}+\alpha a_{22}^{2}
\end{eqnarray*}%
To fix $\alpha _{1}^{\prime }=1$ and $\alpha _{2}^{\prime }=0,$ choose $%
a_{21}=a_{12}=0$ and $a_{11}=1.$ Then $\phi \left( \sum_{a,a}+\alpha
\sum_{b,b}\right) =\sum_{a,a}+\alpha a_{22}^{2}\sum_{b,b}.$ This show that
for any $\alpha ,\beta ,a_{22}\in K^{\ast }$ such that $\beta =\alpha
a_{22}^{2},$ then $\theta _{1,\alpha }$ and $\theta _{1,\beta }$ are in the
same $Aut(J_{2,1})$-orbit. On the other hand , suppose that $\theta
_{1,\alpha }$ and $\theta _{1,\beta }$ are in the same $Aut(J_{2,1})$-orbit.
Then there exist $\phi \in Aut(J_{2,1})$ and $\lambda \in K^{\ast }$ such
that $\phi \theta _{1,\alpha }=\lambda \theta _{1,\beta },$ so we have 
\begin{equation*}
\phi ^{t}\left( 
\begin{array}{cc}
1 & 0 \\ 
0 & \alpha%
\end{array}%
\right) \phi =\allowbreak \lambda \left( 
\begin{array}{cc}
1 & 0 \\ 
0 & \beta%
\end{array}%
\right) \allowbreak
\end{equation*}%
it follows that $\beta =\left( \frac{\det \phi }{\lambda }\right) ^{2}\alpha
.$ Hence $\theta _{1,\alpha }$ and $\theta _{1,\beta }$ are in the same $%
Aut(J_{2,1})$-orbit if and only if there is a $\delta \in K^{\ast }$ with $%
\beta =\delta ^{2}\alpha .$ So we get the algebra%
\begin{equation*}
\begin{tabular}{|l|l|}
\hline
$J_{3,3}^{\alpha }$ & $a^{2}=c$ , $b^{2}=\alpha c$ , $\alpha \in K^{\ast
}\diagup \left( K^{\ast }\right) ^{2}.$ \\ \hline
\end{tabular}%
\end{equation*}

Over algebraic closed field $K$, $K^{\ast }\diagup \left( K^{\ast }\right)
^{2}=\{1\}.$ So we get only one 
\begin{equation*}
\begin{tabular}{|l|l|}
\hline
$J_{3,3}$ & $a^{2}=c$ , $b^{2}=c.$ \\ \hline
\end{tabular}%
\end{equation*}

Over real field $%
\mathbb{R}
,$ $%
\mathbb{R}
^{\ast }\diagup \left( 
\mathbb{R}
^{\ast }\right) ^{2}=\{\pm 1\}.$ So we get two algebras%
\begin{equation*}
\begin{tabular}{|l|l|}
\hline
$J_{3,3}$ & $a^{2}=c$ , $b^{2}=c.$ \\ \hline
$J_{3,3}^{-1}$ & $a^{2}=c$ , $b^{2}=-c.$ \\ \hline
\end{tabular}%
\end{equation*}

Over finite field $K,K^{\ast }\diagup \left( K^{\ast }\right) ^{2}=\{\pm
1\}. $ So we get two algebras%
\begin{equation*}
\begin{tabular}{|l|l|}
\hline
$J_{3,3}$ & $a^{2}=c$ , $b^{2}=c.$ \\ \hline
$J_{3,3}^{-1}$ & $a^{2}=c$ , $b^{2}=-c.$ \\ \hline
\end{tabular}%
\end{equation*}

Over rational field $%
\mathbb{Q}
,%
\mathbb{Q}
^{\ast }\diagup \left( 
\mathbb{Q}
^{\ast }\right) ^{2}$ is infinite. Then there is an infinite number of them.

\underline{Characteristic $K=2$}

If $\theta $ is\ alternate we obtain only one cocycle $\theta
_{1}=\sum_{a,b} $. On the other hand if $\theta $ is not alternate, then it
is diagonalizable. In this case $K^{\ast }\diagup \left( K^{\ast }\right)
^{2}=\{1\}$. so we obtain only one cocycle $\theta
_{2}=\sum_{a,a}+\sum_{b,b}.$ Clearly $\theta _{1},\theta _{2}$ are
inequivalent bilinear form, hence they not lie in the same $Aut(J_{2,1})$%
-orbit,[ $\phi \theta _{1}=(\det \phi )\theta _{1}.$ i.e $\phi \theta _{1}$
is a multiple of $\theta _{1},$hence it is not conjugate to $\theta _{2}.$
Then$\left( J_{2,1}\right) _{\theta _{1}}$ and $\left( J_{2,1}\right)
_{\theta _{2}}$ are not isomorphic]. We get two algebras 
\begin{equation*}
\begin{tabular}{|l|l|}
\hline
$J_{3,3}$ & $a^{2}=c$ , $b^{2}=c.$ \\ \hline
$J_{3,4}$ & $a\circ b=c.$ \\ \hline
\end{tabular}%
\end{equation*}

\item 1\textbf{- dimensional Central extension of }$J_{2,2}$

Here we get that $Z^{2}(J_{2,1},K)$ is spanned by $\sum_{a,a}$ and $%
\sum_{a,b}.$ Moreover $\delta C^{1}(J_{1,1},K)$ is spanned by $\sum_{a,a}$,
then $H^{2}(J_{1,1},K)$ is spanned by $\sum_{a,b}.$ So we get only one
cocycle $\theta =\sum_{a,b}$, yielding the algebra%
\begin{equation*}
\begin{tabular}{|l|l|}
\hline
$J_{3,4}$ & $a^{2}=b$ , $a\circ b=c$. \\ \hline
\end{tabular}%
\end{equation*}

\item 2\textbf{- dimensional Central extension of }$J_{1,1}$

We have that $H^{2}(J_{1,1},K)$ is 1-dimensional spanned by $\sum_{a,a},$
then there is no 2- dimensional Central extension of $J_{1,1}.$
\end{enumerate}

We can summarize some of our results in the following theorems :

\begin{theorem}
Up to isomorphism there exist $4$ nilpotent Jordan algebras of dimension $3$
over algebraic closed field $K$\ and $ch(K)\neq 2$ which are isomorphic to
one of the following \ pairwise non-isomorphic nilpotent Jordan algebras :%
\begin{equation*}
\begin{tabular}{|l|}
\hline
\textbf{Nilpotent Jordan algebras with Centeral component} \\ \hline
$J_{3,1}=J_{2,1}\oplus J_{1,1}.$ \\ \hline
$J_{3,2}=J_{2,2}\oplus J_{1,1}.$ \\ \hline
\textbf{Nilpotent Jordan algebras without Centeral component} \\ \hline
$J_{3,3}$ $:$ $a^{2}=c$ , $b^{2}=c.$ \\ \hline
$J_{3,4}$ $:$ $a^{2}=b$ , $a\circ b=c$. \\ \hline
\end{tabular}%
\end{equation*}
\end{theorem}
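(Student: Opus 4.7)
The plan is to apply the Skjelbred–Sund correspondence (Theorem \ref{h2}) in the analogue carried out in the previous sections, reducing the classification to an orbit problem on $1$-dimensional allowable subspaces of $H^2(J,K)$ for each nilpotent Jordan algebra $J$ of dimension $\leq 2$. By Lemma \ref{h1} (and the subsequent discussion about descendants and central components), every $3$-dimensional nilpotent Jordan algebra either has a central component, in which case it is $I \oplus J_{1,1}$ for some $2$-dimensional nilpotent Jordan algebra $I$, or it is a descendant of some nilpotent Jordan algebra $J$ of dimension $\leq 2$ via a central extension with $\dim V = r = 3 - \dim J$.

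First I would dispose of the algebras with central components: using the two $2$-dimensional nilpotent Jordan algebras $J_{2,1}$ and $J_{2,2}$ already classified, the direct sums $J_{2,1}\oplus J_{1,1}$ and $J_{2,2}\oplus J_{1,1}$ give exactly the two algebras $J_{3,1}$ and $J_{3,2}$. These are inequivalent because $\dim (J_{3,1})^2 = 0$ while $\dim (J_{3,2})^2 = 1$.

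Next I would handle the central extensions of $J_{2,1}$. A direct computation shows $Z^2(J_{2,1},K)$ is spanned by $\Sigma_{a,a}, \Sigma_{a,b}, \Sigma_{b,b}$ and $\delta C^1(J_{2,1},K) = 0$, so $H^2(J_{2,1},K) = Z^2(J_{2,1},K)$. Since $Z(J_{2,1}) = J_{2,1}$, the allowability condition $\theta^\perp \cap Z(J_{2,1}) = 0$ is equivalent to $\theta$ being non-degenerate as a symmetric bilinear form. The group $\mathrm{Aut}(J_{2,1}) = GL_2(K)$ acts on cocycles by $\phi\theta = \phi^t M_\theta \phi$ (with $M_\theta$ the Gram matrix), and by the $1$-dimensional version of the isomorphism criterion we may further scale $\theta$ by any $\lambda \in K^\ast$. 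Over an algebraically closed field of characteristic $\neq 2$ every non-degenerate symmetric $2\times 2$ matrix is congruent to the identity, hence there is a unique orbit, whose representative is $\theta = \Sigma_{a,a} + \Sigma_{b,b}$; this yields $J_{3,3}: a^2 = c,\ b^2 = c$. For $J_{2,2}$, one computes $Z^2(J_{2,2},K) = \langle \Sigma_{a,a}, \Sigma_{a,b}\rangle$ and $\delta C^1(J_{2,2},K) = \langle \Sigma_{a,a}\rangle$ (by the dimension of $J_{2,2}^2$, cf.\ the lemma on $\dim \delta C^1$), so $H^2(J_{2,2},K)$ is $1$-dimensional; the unique non-zero class yields $J_{3,4}: a^2=b,\ a\circ b = c$. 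Finally the $2$-dimensional central extensions of $J_{1,1}$ contribute nothing because $\dim H^2(J_{1,1},K)=1$, so no allowable $2$-dimensional subspace exists.

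The main technical obstacle is the non-degenerate orbit computation over $J_{2,1}$: ruling out that several congruence classes of non-degenerate forms survive the combined action of $\mathrm{Aut}(J_{2,1}) \times K^\ast$. This step is precisely where the hypothesis that $K$ is algebraically closed and $\mathrm{char}\,K\neq 2$ is needed; otherwise one obtains a family parametrized by $K^\ast/(K^\ast)^2$, as indicated by the algebras $J_{3,3}^{\alpha}$ appearing in the preceding discussion. Pairwise non-isomorphism of the four algebras is then easily verified: $J_{3,1}, J_{3,2}, J_{3,3}, J_{3,4}$ are distinguished by the pairs $(\dim J^2, \dim Z(J))$, giving $(0,3), (1,2), (1,2), (2,1)$ respectively, while $J_{3,2}$ and $J_{3,3}$ are separated because $J_{3,2}$ has a central component and $J_{3,3}$ does not (equivalently, $J^2 \not\subset Z(J)$-structure differs). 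This completes the classification.
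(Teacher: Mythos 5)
Your proposal follows essentially the same route as the paper: central-component algebras as direct sums with $J_{1,1}$, then the orbit computation on nondegenerate symmetric forms in $H^{2}(J_{2,1},K)$ (unique congruence class up to scaling over an algebraically closed field of characteristic $\neq 2$), the one-dimensional $H^{2}(J_{2,2},K)$ giving $J_{3,4}$, and the vanishing of two-dimensional allowable subspaces for $J_{1,1}$. The argument is correct; only note that in your invariant table $Z(J_{3,3})$ is one-dimensional (spanned by $c$), so the pair for $J_{3,3}$ is $(1,1)$ rather than $(1,2)$, which in fact makes the separation from $J_{3,2}$ immediate without appealing to central components.
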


\begin{theorem}
Up to isomorphism there exist $5$ nilpotent Jordan algebras of dimension $3$
over field $K$\ and $ch(K)=2$ which are isomorphic to one of the following \
pairwise non-isomorphic nilpotent Jordan algebras :%
\begin{equation*}
\begin{tabular}{|l|}
\hline
\textbf{Nilpotent Jordan algebras with Centeral component} \\ \hline
$J_{3,1}=J_{2,1}\oplus J_{1,1}.$ \\ \hline
$J_{3,2}=J_{2,2}\oplus J_{1,1}.$ \\ \hline
\textbf{Nilpotent Jordan algebras without Centeral component} \\ \hline
$J_{3,3}$ $:$ $a\circ b=c.$ \\ \hline
$J_{3,4}$ $:$ $a^{2}=c$ , $b^{2}=c.$ \\ \hline
$J_{3,5}$ $:$ $a^{2}=b$ , $a\circ b=c$. \\ \hline
\end{tabular}%
\end{equation*}
\end{theorem}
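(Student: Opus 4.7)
The plan is to apply the Skjelbred--Sund analogue (Theorem~\ref{h2}) to each $2$-dimensional nilpotent Jordan algebra, then to append the algebras with a central component of the form $J\oplus J_{1,1}$, and finally to exhibit invariants distinguishing the resulting five algebras. I would first observe that a $2$-dimensional central extension of $J_{1,1}$ cannot arise, since $\dim H^{2}(J_{1,1},K)=1$ leaves no $2$-dimensional allowable subspace, so all descendants must come from $J_{2,1}$ or $J_{2,2}$.

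Next I would handle the straightforward cases. The trivial direct-sum extensions contribute $J_{3,1}=J_{2,1}\oplus J_{1,1}$ and $J_{3,2}=J_{2,2}\oplus J_{1,1}$. For $J_{2,2}$ a short calculation gives $Z^{2}(J_{2,2},K)=\langle\sum_{a,a},\sum_{a,b}\rangle$ and, since $a^{2}=b$, $\delta C^{1}(J_{2,2},K)=\langle\sum_{a,a}\rangle$, so $H^{2}(J_{2,2},K)$ is $1$-dimensional with representative $\theta=\sum_{a,b}$, and $\theta^{\perp}\cap Z(J_{2,2})=0$; this produces the single algebra $J_{3,5}$.

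The hard part will be the $1$-dimensional central extensions of the abelian algebra $J_{2,1}$, where $H^{2}(J_{2,1},K)$ is the full $3$-dimensional space of symmetric bilinear forms on $K^{2}$ and $\mathrm{Aut}(J_{2,1})=\mathrm{GL}_{2}(K)$ acts by $\phi\cdot\theta=\phi^{t}\theta\phi$. The main obstacle is the characteristic-$2$ feature that a symmetric form need not be diagonalizable; I would circumvent this by splitting into the two $\mathrm{GL}_{2}(K)$-invariant classes, alternating and non-alternating. For the alternating forms, any nonzero one is equivalent up to a scalar to the symplectic form $\theta_{1}=\sum_{a,b}$. For the non-alternating forms, diagonalization is available after a change of basis, and under the implicit assumption $K^{*}=(K^{*})^{2}$ all nondegenerate diagonal forms reduce to $\theta_{2}=\sum_{a,a}+\sum_{b,b}$. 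The identity $\phi\cdot\theta_{1}=(\det\phi)\theta_{1}$ keeps $\theta_{1}$ alternating under every $\phi$, while $\theta_{2}$ is not alternating, so $\theta_{1}$ and $\theta_{2}$ lie in distinct orbits modulo scaling; this step yields $J_{3,3}$ and $J_{3,4}$.

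Finally I would verify pairwise non-isomorphism using elementary invariants: $J_{3,1}$ and $J_{3,2}$ are the only algebras with a central component, distinguished from each other by $\dim J^{2}\in\{0,1\}$; among $J_{3,3},J_{3,4},J_{3,5}$, the last is singled out by $\dim J^{2}=2$, while $J_{3,3}$ satisfies $x^{2}=0$ for every $x\in J$ (an algebra-isomorphism invariant), unlike $J_{3,4}$, where $a^{2}=c\neq 0$.
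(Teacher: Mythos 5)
Your proposal follows essentially the same route as the paper: trivial direct sums give $J_{3,1},J_{3,2}$; the one-dimensional $H^{2}(J_{2,2},K)$ gives $J_{3,5}$; the nondegenerate cocycles on $J_{2,1}$ are split into alternating versus non-alternating symmetric forms, with the two orbits separated by the identity $\phi\cdot\theta_{1}=(\det\phi)\,\theta_{1}$, giving $J_{3,3}$ and $J_{3,4}$; and $\dim H^{2}(J_{1,1},K)=1$ rules out step-$2$ descendants of $J_{1,1}$. The one point worth noting is that both you and the paper reduce every nondegenerate non-alternating form to $\sum_{a,a}+\sum_{b,b}$ by assuming $K^{*}=(K^{*})^{2}$, which holds only for perfect fields of characteristic $2$ --- you at least flag this explicitly as an assumption, whereas the paper asserts it as a fact, so for imperfect $K$ the count of five is not actually established by either argument.
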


\begin{theorem}
Up to isomorphism there exist $5$ nilpotent Jordan algebras of dimension $3$
over $%
\mathbb{R}
$ which are isomorphic to one of the following \ pairwise non-isomorphic
nilpotent Jordan algebras :%
\begin{equation*}
\begin{tabular}{|l|}
\hline
\textbf{Nilpotent Jordan algebras with Centeral component} \\ \hline
$J_{3,1}=J_{2,1}\oplus J_{1,1}.$ \\ \hline
$J_{3,2}=J_{2,2}\oplus J_{1,1}.$ \\ \hline
\textbf{Nilpotent Jordan algebras without Centeral component} \\ \hline
$J_{3,3}^{\alpha =\pm 1}$ $:$ $a^{2}=c$ $,$ $b^{2}=\alpha c.$ \ \ \ \ \ \ $%
\left( \alpha =\pm 1\right) $ \\ \hline
$J_{3,4}$ $:$ $a^{2}=b$ $,$ $a\circ b=c.$ \\ \hline
\end{tabular}%
\end{equation*}
\end{theorem}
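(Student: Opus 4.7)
The plan is to apply the Skjelbred--Sund style classification procedure from Section 5 to the two-dimensional nilpotent Jordan algebras $J_{2,1}$ and $J_{2,2}$ classified in the previous subsection, split into the two families: algebras with a central component (obtained by taking a direct sum with $J_{1,1}$) and algebras without a central component (obtained as non-trivial one-dimensional central extensions). By Theorem \ref{h2}, together with Lemma \ref{h1} applied recursively, every three-dimensional nilpotent Jordan algebra over $\mathbb{R}$ arises in exactly one of these ways, so the theorem is reduced to carrying out each case and then verifying pairwise non-isomorphism of the resulting list.

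First I would handle the trivial extensions, which immediately produce $J_{3,1}=J_{2,1}\oplus J_{1,1}$ and $J_{3,2}=J_{2,2}\oplus J_{1,1}$. Next I would treat one-dimensional central extensions of $J_{2,2}$: a short calculation shows $Z^2(J_{2,2},\mathbb{R})=\langle\sum_{a,a},\sum_{a,b}\rangle$ and $\delta C^1(J_{2,2},\mathbb{R})=\langle\sum_{a,a}\rangle$ by the lemma computing $\dim \delta C^1$, so $H^2(J_{2,2},\mathbb{R})$ is one-dimensional; one then only has to check that the unique non-zero cocycle $\sum_{a,b}$ satisfies $\theta^{\perp}\cap Z(J_{2,2})=0$, yielding $J_{3,4}$. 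Also, since $H^2(J_{1,1},\mathbb{R})$ is one-dimensional, no two-dimensional central extension of $J_{1,1}$ exists without a central component.

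The main substance is the one-dimensional central extensions of $J_{2,1}$, where the $\mathrm{Aut}(J_{2,1})$-action on the Grassmannian of lines in $H^2(J_{2,1},\mathbb{R})\cong\mathbb{R}^3$ has to be analysed. Here $J_{2,1}$ carries the zero product, so $\mathrm{Aut}(J_{2,1})=\mathrm{GL}_2(\mathbb{R})$ acts on the space of symmetric bilinear forms by $\theta\mapsto \phi^{t}\theta\phi$; the allowable cocycles are exactly the non-degenerate forms. Since $\mathrm{ch}(\mathbb{R})\neq 2$ every such form is diagonalisable, and up to the further scaling by $\lambda\in\mathbb{R}^{*}$ allowed in Lemma \ref{n} (which lets us replace $\theta$ by $\lambda\theta$), the orbit of $\mathrm{diag}(1,\alpha)$ is determined by the class of $\alpha$ in $\mathbb{R}^{*}/(\mathbb{R}^{*})^{2}=\{\pm 1\}$. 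This yields exactly the two algebras $J_{3,3}^{\alpha=\pm 1}$. The hardest conceptual step here is showing the $\lambda$-scaling really only contributes a square, which I would do exactly as in the $K$-generic computation above the statement: comparing $\phi^{t}\operatorname{diag}(1,\alpha)\phi=\lambda\operatorname{diag}(1,\beta)$ and taking determinants gives $\beta=(\det\phi/\lambda)^{2}\alpha$.

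Finally I would confirm the list is pairwise non-isomorphic by invariants preserved under Jordan isomorphism: $J_{3,1}$ has trivial multiplication; $J_{3,2}$ has one-dimensional $J^2$ and a central component of dimension two; $J_{3,3}^{1}$ and $J_{3,3}^{-1}$ have $\dim J^2=1$ but no central component and are distinguished because the image of the squaring map $x\mapsto x^2$ has representatives of opposite sign in $\mathbb{R}^{*}/(\mathbb{R}^{*})^{2}$, as computed above; and $J_{3,4}$ is the only three-step nilpotent algebra in the list since $c^{3}(J_{3,4})\neq 0$. Nothing else appears from the procedure, so the list is complete.
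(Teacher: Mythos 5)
Your proposal is correct and follows essentially the same route as the paper: trivial extensions of $J_{2,1}$ and $J_{2,2}$ give $J_{3,1}$ and $J_{3,2}$; the one-dimensional $H^{2}(J_{2,2},\mathbb{R})$ gives $J_{3,4}$; and the $\mathrm{GL}_{2}(\mathbb{R})$-action together with the $\lambda$-scaling on nondegenerate symmetric forms over $J_{2,1}$ is resolved by the same determinant computation $\beta=(\det\phi/\lambda)^{2}\alpha$, yielding the two classes $\alpha\in\mathbb{R}^{*}/(\mathbb{R}^{*})^{2}=\{\pm1\}$. The explicit pairwise non-isomorphism checks at the end are a small addition beyond what the paper writes out, but the argument is the paper's argument.
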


We are showed that also the number of nilpotent Jordan algebras of dimension 
$3$ over finite field is five for any characteristic.

\begin{theorem}
Every nilpotent Jordan algebra of dimension $\leq 3$ is a commutative
nilpotent associative algebra.
\end{theorem}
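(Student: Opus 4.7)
The plan is to observe first that commutativity is built into the definition of a Jordan algebra (the bilinear form $B$ is symmetric), so it remains only to verify associativity, i.e., that the associator $(x,y,z) := (x\circ y)\circ z - x\circ(y\circ z)$ vanishes on every triple of basis elements. The previous classification theorems provide an explicit finite list of indecomposable nilpotent Jordan algebras of dimension $\leq 3$ over each of the fields considered. Algebras with a central component are direct sums of a smaller algebra with a trivial one, and are associative if and only if the non-central summand is, so I may restrict attention to the indecomposable algebras on the lists.

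Inspecting these lists, I would then group the algebras according to whether $c^3(J) = B(c^2(J),J)$ vanishes. For $J_{1,1}$, $J_{2,2}$, the algebras $J_{3,3}^{\alpha}$ in characteristic $\neq 2$, and $J_{3,3}, J_{3,4}$ in characteristic $2$, a direct check shows that $c^2(J)$ lands in the centre and the centre multiplies everything to zero, so $c^3(J) = 0$. Hence every triple product is zero and the associator vanishes trivially on every basis triple.

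The only genuinely non-trivial case is the algebra defined by $a^2 = b$ and $a\circ b = c$, which appears as $J_{3,4}$ in characteristic $\neq 2$ and as $J_{3,5}$ in characteristic $2$; here $c^3(J) = \langle c\rangle \neq 0$. For this case I would tabulate the associator on all ordered triples of basis elements. The triple $(a,a,a)$ gives $b\circ a - a\circ b = 0$ by commutativity, while every other triple has at least one factor equal to $b$ or $c$; since $b\circ b = b\circ c = c\circ c = 0$ and $a\circ c = 0$, both halves of the associator vanish. The main thing to watch, and the only potential obstacle, is simply enumerating every algebra from each characteristic-specific classification so that no case is overlooked; the computations themselves are elementary.
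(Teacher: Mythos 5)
Your proof is correct and follows essentially the same route as the paper, which states this theorem without an explicit argument and clearly intends it to be read off from the preceding classification lists by direct inspection. Your observation that $c^{3}(J)=0$ forces all triple products (hence all associators) to vanish, isolating $a^{2}=b$, $a\circ b=c$ as the single case needing a hands-on check, is a clean way to organize that inspection and covers every algebra in each characteristic.
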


It follows that all commutative nilpotent associative algebras of dimension $%
\leq 3$ are classified, and all previous theorems are valid for commutative
nilpotent associative algebras.

\subsection{Nilpotent Jordan algebras of dimension $4$}

In this subsection we construct all nonisomorphic nilpotent algebras of
dimension four :

\begin{itemize}
\item Over algebraic closed field $K$ and characteristic $K\neq 2.$

\item Over real field $%
\mathbb{R}
.$
\end{itemize}

Here we denote the $j$-th algebra of dimension $i$ over $K$ and $%
\mathbb{R}
$\ by $J_{i,j}(K)$ and $J_{i,j}(%
\mathbb{R}
)$ respectively.

\subsubsection{Nilpotent Jordan algebras with Centeral component}

\begin{enumerate}
\item \textbf{1- dimensional Trivial Central extension of }$J_{3,1}$\textbf{%
\ corresponding to }$\theta =0.$

We get the Jordan algebra :%
\begin{equation*}
\begin{tabular}{|l|l|}
\hline
$J_{4,1}(K)=J_{3,1}\oplus J_{1,1}$ & All multiplications are zero. \\ \hline
\end{tabular}%
\end{equation*}%
\begin{equation*}
\begin{tabular}{|l|l|}
\hline
$J_{4,1}(%
\mathbb{R}
)=J_{3,1}\oplus J_{1,1}$ & All multiplications are zero. \\ \hline
\end{tabular}%
\end{equation*}

\item \textbf{1- dimensional Trivial Central extension of }$J_{3,2}$\textbf{%
\ corresponding to }$\theta =0.$

We get the Jordan algebra :%
\begin{equation*}
\begin{tabular}{|l|l|}
\hline
$J_{4,2}(K)=J_{3,2}\oplus J_{1,1}$ & $a^{2}=b$. \\ \hline
\end{tabular}%
\end{equation*}%
\begin{equation*}
\begin{tabular}{|l|l|}
\hline
$J_{4,2}(%
\mathbb{R}
)=J_{3,2}\oplus J_{1,1}$ & $a^{2}=b$. \\ \hline
\end{tabular}%
\end{equation*}

\item \textbf{1- dimensional Trivial Central extension of }$J_{3,3}^{\alpha
} $\textbf{,}$\alpha \in K^{\ast }\diagup \left( K^{\ast }\right) ^{2},$%
\textbf{corresponding to }$\theta =0.$

We get the Jordan algebra :%
\begin{equation*}
\begin{tabular}{|l|l|}
\hline
$J_{4,3}(K)=J_{3,3}\oplus J_{1,1}$ & $a^{2}=c$ , $b^{2}=c$ $.$ \\ \hline
\end{tabular}%
\end{equation*}%
\begin{equation*}
\begin{tabular}{|l|l|}
\hline
$J_{4,3}^{\alpha =\pm 1}(%
\mathbb{R}
)=J_{3,3}^{\alpha =\pm 1}\oplus J_{1,1}$ & $a^{2}=c$ , $b^{2}=\alpha c$ $.$
\ \ $\left( \alpha =\pm 1\right) $ \\ \hline
\end{tabular}%
\end{equation*}

\item \textbf{1- dimensional Trivial Central extension of }$J_{3,4}$\textbf{%
\ corresponding to }$\theta =0.$

We get the Jordan algebra :%
\begin{equation*}
\begin{tabular}{|l|l|}
\hline
$J_{4,4}(K)=J_{3,4}\oplus J_{1,1}$ & $a^{2}=b$ , $a\circ b=c$. \\ \hline
\end{tabular}%
\end{equation*}%
\begin{equation*}
\begin{tabular}{|l|l|}
\hline
$J_{4,4}(%
\mathbb{R}
)=J_{3,4}\oplus J_{1,1}$ & $a^{2}=b$ , $a\circ b=c$. \\ \hline
\end{tabular}%
\end{equation*}
\end{enumerate}

\subsubsection{Nilpotent Jordan algebras without Centeral component}

\begin{enumerate}
\item 1\textbf{- dimensional Central extension of }$J_{3,1}$

Here we get that $Z^{2}(J_{3,1},K)$ is spanned by $\sum_{a,a},\sum_{b,b},%
\sum_{c,c},\sum_{a,b},\sum_{a,c}$ and $\sum_{b,c},$ also we get that $\delta
C^{1}(J_{3,1},K)=0$. Then $H^{2}(J_{3,1},K)$ consists of $\theta =\alpha
_{1}\sum_{a,a}+\alpha _{2}\sum_{b,b}+\alpha _{3}\sum_{c,c}+\alpha
_{4}\sum_{a,b}+\alpha _{5}\sum_{a,c}+\alpha _{6}\sum_{b,c}.$ The centre of $%
J_{3,1},Z(J_{3,1}),$ spanned by $a,b$ and $c$, it follows that $\theta
:J_{3,1}\times J_{3,1}\longrightarrow K$ such that%
\begin{equation*}
\theta ^{\perp }\cap Z(J_{3,1})=0
\end{equation*}%
is nondegenerate since $\theta ^{\perp }=0.$ The autommorphism group,$%
Aut(J_{3,1}),$ consists of $:$%
\begin{equation*}
\phi =\left( 
\begin{array}{ccc}
a_{11} & a_{12} & a_{13} \\ 
a_{21} & a_{22} & a_{23} \\ 
a_{31} & a_{32} & a_{33}%
\end{array}%
\right) ,\det \phi \neq 0.
\end{equation*}

\underline{Over algebraic closed field $K$ and characteristic $K\neq 2:$}

Up to equivalence there is only one nondegenerate symmetric bilinear form, $%
\sum_{a,a}+\sum_{b,b}+\sum_{c,c}.$ So there is only one $Aut(J_{3,1})$-orbit 
$\theta =\sum_{a,a}+\sum_{b,b}+\sum_{c,c},$ yielding the algebra%
\begin{equation*}
\begin{tabular}{|l|l|}
\hline
$J_{4,5}(K)$ & $a^{2}=d$ $,$ $b^{2}=d$ $,c^{2}=d$. \\ \hline
\end{tabular}%
\end{equation*}

\underline{Over real field $%
\mathbb{R}
:$}

By \textit{Sylvester's law of inertia and Signature, }up to equivalence
there are four nondegenerate symmetric bilinear form

$\theta _{1}=\sum_{a,a}+\sum_{b,b}+\sum_{c,c}$

$\theta _{2}=\sum_{a,a}+\sum_{b,b}-\sum_{c,c}$

$\theta _{3}=-\sum_{a,a}-\sum_{b,b}+\sum_{c,c}$

$\theta _{4}=-\sum_{a,a}-\sum_{b,b}-\sum_{c,c}.$

We see that $\theta _{1}$ and $\theta _{4}$ are in the same $Aut(J_{3,1})$%
-orbit (since $\theta _{1}=-\theta _{4}$), also $\theta _{2}$ and $\theta
_{3}$ are in the same $Aut(J_{3,1})$-orbit (since $\theta _{2}=-\theta _{3}$%
). It remains to check that if $\theta _{1}$ and $\theta _{2}$ are in the
same $Aut(J_{3,1})$-orbit or not. We claim that $\theta _{1}$ and $\theta
_{2}$ are not in the same $Aut(J_{3,1})$-orbit. Let $\phi \in Aut(J_{3,1})$
and $\lambda \in 
\mathbb{R}
^{\ast }$ such that $\phi \theta _{1}=\lambda \theta _{2}.$ Then%
\begin{eqnarray}
a_{11}^{2}+a_{21}^{2}+a_{31}^{2} &=&\lambda  \label{l1} \\
a_{12}^{2}+a_{22}^{2}+a_{32}^{2} &=&\lambda  \label{l2} \\
a_{13}^{2}+a_{23}^{2}+a_{33}^{2} &=&-\lambda  \label{l3} \\
a_{11}a_{12}+a_{21}a_{22}+a_{31}a_{32} &=&0  \notag \\
a_{11}a_{13}+a_{21}a_{23}+a_{31}a_{33} &=&0  \notag \\
a_{12}a_{13}+a_{22}a_{23}+a_{32}a_{33} &=&0  \notag
\end{eqnarray}%
Equations $\left( \ref{l1}\right) ,\left( \ref{l2}\right) $ with $\left( \ref%
{l3}\right) $ lead to $\lambda =0$ and $\phi =0.$ So $\theta _{1}$ and $%
\theta _{2}$ are not in the same $Aut(J_{3,1})$-orbit, hence we get only two
algebras 
\begin{equation*}
\begin{tabular}{|l|l|}
\hline
$J_{4,5}^{\alpha =\pm 1}(%
\mathbb{R}
)$ & $a^{2}=d$ $,$ $b^{2}=d$ $,c^{2}=\alpha d$. \\ \hline
\end{tabular}%
\end{equation*}

\item 1\textbf{- dimensional Central extension of }$J_{3,2}$

Here we get that $Z^{2}(J_{3,2},K)$ is spanned by $\sum_{a,a},\sum_{a,b},%
\sum_{a,c},\sum_{b,c}$ and $\sum_{c,c}.$ Moreover, $\delta C^{1}(J_{3,2},K)$
is spanned by $\sum_{a,a}$. Then $H^{2}(J_{3,2},K)$ consists of $\theta
:=\alpha _{1}\sum_{a,b}+\alpha _{2}\sum_{a,c}+\alpha _{3}\sum_{b,c}+\alpha
_{4}\sum_{c,c}.$ The centre of $J_{3,2},Z(J_{3,2}),$ is spanned by $b$ and $%
c.$ Furthermore the automorphism group,$Aut(J_{3,2}),$ consists of :%
\begin{equation*}
\phi =%
\begin{pmatrix}
a_{11} & 0 & 0 \\ 
a_{21} & a_{11}^{2} & a_{23} \\ 
a_{31} & 0 & a_{33}%
\end{pmatrix}%
\text{ , }\det \phi \neq 0.
\end{equation*}%
The automorphism group acts as follows :%
\begin{eqnarray*}
\alpha _{1} &\longrightarrow &a_{11}^{3}\alpha _{1}+a_{31}a_{11}^{2}\alpha
_{3} \\
\alpha _{2} &\longrightarrow &a_{11}a_{23}\alpha _{1}+a_{11}a_{33}\alpha
_{2}+(a_{21}a_{33}+a_{31}a_{23})\alpha _{3}+a_{31}a_{33}\alpha _{4} \\
\alpha _{3} &\longrightarrow &a_{11}^{2}a_{33}\alpha _{3} \\
\alpha _{4} &\longrightarrow &2a_{23}a_{33}\alpha _{3}+a_{33}^{2}\alpha _{4}.
\end{eqnarray*}

We distinguish two cases.

\textbf{Case 1} \textbf{: }First suppose that $\alpha _{3}\neq 0,$ then we
can divide to get $\alpha _{3}=1.$ So we may assume that $\alpha _{3}=1,$
choose $a_{11}=a_{33}=1$, this leads to :%
\begin{eqnarray*}
\alpha _{1} &\longrightarrow &\alpha _{1}+a_{31} \\
\alpha _{2} &\longrightarrow &a_{23}\alpha _{1}+\alpha
_{2}+(a_{21}+a_{31}a_{23})+a_{31}\alpha _{4} \\
\alpha _{3} &\longrightarrow &1 \\
\alpha _{4} &\longrightarrow &2a_{23}+\alpha _{4}.
\end{eqnarray*}%
By taking $a_{31}=-\alpha _{1}$ and $a_{23}=-\frac{1}{2}\alpha _{4},$ we get 
$\alpha _{1}\longrightarrow 0$ and $\alpha _{4}\longrightarrow 0.$ Then we
can assume that $\alpha _{1}=\alpha _{4}=0$ and $\alpha _{3}=1.$ To conserve
this we set \ $a_{31}=a_{23}=0$ and $a_{11}=a_{33}=1,$ in this case we get%
\begin{eqnarray*}
\alpha _{1} &\longrightarrow &0 \\
\alpha _{2} &\longrightarrow &\alpha _{2}+a_{21} \\
\alpha _{3} &\longrightarrow &1 \\
\alpha _{4} &\longrightarrow &0.
\end{eqnarray*}%
By taking $a_{21}=-\alpha _{2},$ then $\alpha _{2}\longrightarrow 0.$ Then
we may assume that $\theta =\sum_{b,c},$ so we get the algebra :%
\begin{equation*}
\begin{tabular}{|l|l|}
\hline
$J_{4,6}(K)$ & $a^{2}=b\text{ },\text{ }b\circ c=d.$ \\ \hline
\end{tabular}%
\end{equation*}%
\begin{equation*}
\begin{tabular}{|l|l|}
\hline
$J_{4,6}(%
\mathbb{R}
)$ & $a^{2}=b\text{ },\text{ }b\circ c=d.$ \\ \hline
\end{tabular}%
\end{equation*}

\textbf{Case 2} \textbf{: }If $\alpha _{3}=0,$ this ledas to%
\begin{eqnarray*}
\alpha _{1} &\longrightarrow &a_{11}^{3}\alpha _{1} \\
\alpha _{2} &\longrightarrow &a_{11}a_{23}\alpha _{1}+a_{11}a_{33}\alpha
_{2}+a_{31}a_{33}\alpha _{4} \\
\alpha _{3} &\longrightarrow &0 \\
\alpha _{4} &\longrightarrow &a_{33}^{2}\alpha _{4}.
\end{eqnarray*}

In order to have $\theta ^{\perp }\cap Z(J_{3,2})=0,$ we need $\alpha
_{1}\neq 0$ and $(\alpha _{2},\alpha _{4})\neq (0,0).$ So after dividing we
may assume $\alpha _{1}=1.$ Choose $a_{11}=a_{33}=1,a_{31}=0$ and $%
a_{23}=-\alpha _{2},$ this leads to :%
\begin{eqnarray*}
\alpha _{1} &\longrightarrow &1 \\
\alpha _{2} &\longrightarrow &0 \\
\alpha _{3} &\longrightarrow &0 \\
\alpha _{4} &\longrightarrow &\alpha _{4}\neq 0.
\end{eqnarray*}%
then we get the cocycle $\theta =\sum_{a,b}+\alpha _{4}\sum_{c,c}.$ Now the
problem is to describe exactly when \ there exist $\phi \in Aut(J_{3,2})$
with $\phi (\sum_{a,b}+\alpha _{4}\sum_{c,c})=\lambda (\sum_{a,b}+\alpha
_{4}^{^{\prime }}\sum_{c,c})$ for some $\lambda \in K^{\ast }.$ Then we get $%
\phi (\sum_{a,b}+\alpha _{4}\sum_{c,c})=\lambda (\sum_{a,b}+\alpha
_{4}^{^{\prime }}\sum_{c,c})$ if and only if there exist $a_{11},a_{33}\in
K^{\ast }$ such that $a_{33}^{2}\alpha _{4}=a_{11}^{3}\alpha _{4}^{^{\prime
}}.$ Then there is only one $Aut(J_{3,2})$-orbit, with representative $%
\theta =\sum_{a,b}+\sum_{c,c}.$ So we get the algebra 
\begin{equation*}
\begin{tabular}{|l|l|}
\hline
$J_{4,7}(K)$ & $a^{2}=b\text{ },\text{ }a\circ b=d\text{ },\text{ }c^{2}=d.$
\\ \hline
\end{tabular}%
\end{equation*}%
\begin{equation*}
\begin{tabular}{|l|l|}
\hline
$J_{4,7}(%
\mathbb{R}
)$ & $a^{2}=b\text{ },\text{ }a\circ b=d\text{ },\text{ }c^{2}=d.$ \\ \hline
\end{tabular}%
\end{equation*}

\textit{Remarks : }

\begin{itemize}
\item \textit{For any }$\alpha ,\beta \in K^{\ast }$ there exist $x,y\in
K^{\ast }$ such that $x^{3}\alpha =y^{2}\beta ,$ by taking $x=\alpha \beta $
and $y=\alpha ^{2}\beta .$

\item Let $J_{4,7}^{\alpha }:a^{2}=b$ $,$ $a\circ b=d$ $,$ $c^{2}=\alpha d.$
Now putting $a^{^{\prime }}=\alpha a,b^{^{\prime }}=\alpha ^{2}b,c^{^{\prime
}}=\gamma c,d^{^{\prime }}=\alpha ^{3}d$ we get the same multiplication
table, but the parameter has changed to $1$.So we may assume $\alpha =1,$
and we get only one algebra $J_{4,7}$.

\item Cocycles $\sum_{a,b}+\sum_{c,c}$ and $\sum_{b,c}$ not lie in the same $%
Aut(J_{3,2})$-orbit, $\phi \left( \sum_{a,b}+\sum_{c,c}\right)
=a_{11}^{3}\sum_{a,b}+(a_{11}a_{23}+a_{31}a_{33})\sum_{a,c}+a_{33}^{2}%
\sum_{c,c}.$ Clearly $J_{4,6}$ and $J_{4,7}$ are not isomorphic, $J_{4,6}$
not associative but $J_{4,7}$ is a commutative nilpotent associative algebra.
\end{itemize}

\item 1\textbf{- dimensional Central extension of }$J_{3,3}^{\alpha },\alpha
\in K^{\ast }\diagup \left( K^{\ast }\right) ^{2}$

Here we get that $Z^{2}(J_{3,3}^{\alpha },K)$ consists of :

$\theta :=\alpha _{1}\sum_{a,a}+\alpha _{2}\sum_{b,b}+\alpha
_{3}\sum_{a,b}+\alpha _{4}\sum_{a,c}+\alpha _{5}\sum_{b,c}.$

Moreover, $\delta C^{1}(J_{3,3}^{\alpha },K)$ is spanned by $%
\sum_{a,a}+\alpha \sum_{b,b},$ then $H^{2}(J_{3,3}^{\alpha },K)$ is consists
of $\theta =\alpha _{1}\sum_{a,a}+\alpha _{2}\sum_{b,b}+\alpha
_{3}\sum_{a,b}+\alpha _{4}\sum_{a,c}+\alpha _{5}\sum_{b,c}$ modulo $%
\sum_{a,a}+\alpha \sum_{b,b}.$ Furthermore the automorphism group,$%
Aut(J_{3,3}^{\alpha })$, consists of :%
\begin{equation*}
\phi =\left( 
\begin{array}{ccc}
a_{11} & a_{12} & 0 \\ 
a_{21} & a_{22} & 0 \\ 
a_{31} & a_{32} & a_{33}%
\end{array}%
\right)
\end{equation*}%
such that%
\begin{eqnarray*}
a_{11}^{2}+\alpha a_{21}^{2} &=&a_{33}\text{ },\text{ }a_{12}^{2}+\alpha
a_{22}^{2}=a_{33} \\
a_{11}a_{12}+\alpha a_{21}a_{22} &=&0\text{ },\text{ }a_{33}\delta
=a_{33}(a_{11}a_{22}-a_{21}a_{12})\neq 0.
\end{eqnarray*}%
Write $\phi \theta =\alpha _{1}^{\prime }\sum_{a,a}+\alpha _{2}^{\prime
}\sum_{b,b}+\alpha _{3}^{\prime }\sum_{a,b}+\alpha _{4}^{\prime
}\sum_{a,c}+\alpha _{5}^{\prime }\sum_{b,c}.$ Then%
\begin{eqnarray*}
\alpha _{1}^{^{\prime }} &=&a_{11}^{2}\alpha _{1}+a_{21}^{2}\alpha
_{2}+2a_{11}a_{21}\alpha _{2}+a_{11}a_{31}\alpha _{4}+a_{21}a_{31}\alpha _{5}
\\
\alpha _{2}^{^{\prime }} &=&a_{12}^{2}\alpha _{1}+a_{22}^{2}\alpha
_{2}+2a_{11}a_{21}\alpha _{2}+a_{12}a_{32}\alpha _{4}+a_{22}a_{32}\alpha _{5}
\\
\alpha _{3}^{^{\prime }} &=&a_{11}a_{12}\alpha _{1}+a_{22}a_{21}\alpha
_{2}+(a_{11}a_{22}+a_{21}a_{12})\alpha _{3} \\
&&+(a_{11}a_{32}+a_{31}a_{12})\alpha _{4}+(a_{21}a_{32}+a_{31}a_{22})\alpha
_{5} \\
\alpha _{4}^{^{\prime }} &=&\delta \left( a_{22}\alpha \alpha
_{3}-a_{12}\alpha _{4}\right) \\
\alpha _{5}^{^{\prime }} &=&a_{33}\left( a_{12}\alpha _{3}+a_{22}\alpha
_{4}\right) .
\end{eqnarray*}

The centre of $J_{3,3}^{\alpha },Z(J_{3,3}^{\alpha }),$ is spanned by $c$,
so in order to have $\theta ^{\perp }\cap Z(J_{3,3}^{\alpha })=0$ we need
one of $\alpha _{4},\alpha _{5}$ nonzero. Without loss of generality, we can
assume that $\alpha _{4}=1$. Then

$\theta :=\alpha _{1}\sum_{a,a}+\alpha _{2}\sum_{b,b}+\alpha
_{3}\sum_{a,b}+\sum_{a,c}+\alpha _{5}\sum_{b,c}$ modulo $\sum_{a,a}+\alpha
\sum_{b,b}.$

\underline{\underline{\textbf{Case }$\alpha =1$ :}}

We get 
\begin{eqnarray*}
\alpha _{1}^{^{\prime }} &=&a_{11}^{2}\alpha _{1}+a_{21}^{2}\alpha
_{2}+2a_{11}a_{21}\alpha _{2}+a_{11}a_{31}+a_{21}a_{31}\alpha _{5} \\
\alpha _{2}^{^{\prime }} &=&a_{12}^{2}\alpha _{1}+a_{22}^{2}\alpha
_{2}+2a_{11}a_{21}\alpha _{2}+a_{12}a_{32}+a_{22}a_{32}\alpha _{5} \\
\alpha _{3}^{^{\prime }} &=&a_{11}a_{12}\alpha _{1}+a_{22}a_{21}\alpha
_{2}+(a_{11}a_{22}+a_{21}a_{12})\alpha _{3} \\
&&+a_{11}a_{32}+a_{31}a_{12}+(a_{21}a_{32}+a_{31}a_{22})\alpha _{5} \\
\alpha _{4}^{^{\prime }} &=&\delta \left( a_{22}-a_{12}\alpha _{5}\right) \\
\alpha _{5}^{^{\prime }} &=&a_{33}\left( a_{12}+a_{22}\alpha _{5}\right) .
\end{eqnarray*}%
\underline{If $1+\alpha _{5}^{2}\neq 0:$}

We can choose $a_{22}=\frac{1}{\delta \left( 1+\alpha _{5}^{2}\right) }$ and 
$a_{12}=\frac{-\alpha _{5}}{\delta \left( 1+\alpha _{5}^{2}\right) }$ to get 
$\alpha _{4}^{^{\prime }}=1$ and $\alpha _{5}^{^{\prime }}=0.$ So may assume
that $\alpha _{4}=1$ and $\alpha _{5}=0,$ to fix $\theta $, choose $%
a_{12}=a_{21}=0$ and $a_{11}=a_{22}=1.$ It follows that 
\begin{eqnarray*}
\alpha _{1}^{^{\prime }} &=&\alpha _{1}+a_{31} \\
\alpha _{2}^{^{\prime }} &=&\alpha _{2} \\
\alpha _{3}^{^{\prime }} &=&\alpha _{3}+a_{32} \\
\alpha _{4}^{^{\prime }} &=&1 \\
\alpha _{5}^{^{\prime }} &=&0.
\end{eqnarray*}%
Choose $a_{31}=\alpha _{2}-\alpha _{1}$ and $a_{32}=-\alpha _{3}.$

So we can assume that $\theta =\alpha _{2}\left(
\sum_{a,a}+\sum_{b,b}\right) +\sum_{a,c}$ modulo $\sum_{a,a}+\sum_{b,b},$
then $\theta =\sum_{a,c}.$ We get algebra%
\begin{equation*}
\begin{tabular}{|l|l|}
\hline
$J_{4,8}(K)$ & $a^{2}=c\text{ },\text{ }b^{2}=c\text{ },\text{ }a\circ c=d.$
\\ \hline
\end{tabular}%
\end{equation*}%
\begin{equation*}
\begin{tabular}{|l|l|}
\hline
$J_{4,8}(%
\mathbb{R}
)$ & $a^{2}=c\text{ },\text{ }b^{2}=c\text{ },\text{ }a\circ c=d.$ \\ \hline
\end{tabular}%
\end{equation*}%
\underline{If $1+\alpha _{5}^{2}=0:$}

\underline{Over algebraic closed field $K$ and characteristic $K\neq 2:$}

We have 
\begin{eqnarray*}
\alpha _{1}^{^{\prime }} &=&a_{11}^{2}\alpha _{1}+a_{21}^{2}\alpha
_{2}+2a_{11}a_{21}\alpha _{2}+a_{11}a_{31}+a_{21}a_{31}\alpha _{5} \\
\alpha _{2}^{^{\prime }} &=&a_{12}^{2}\alpha _{1}+a_{22}^{2}\alpha
_{2}+2a_{11}a_{21}\alpha _{2}+a_{12}a_{32}+a_{22}a_{32}\alpha _{5} \\
\alpha _{3}^{^{\prime }} &=&a_{11}a_{12}\alpha _{1}+a_{22}a_{21}\alpha
_{2}+(a_{11}a_{22}+a_{21}a_{12})\alpha _{3} \\
&&+a_{11}a_{32}+a_{31}a_{12}+(a_{21}a_{32}+a_{31}a_{22})\alpha _{5} \\
\alpha _{4}^{^{\prime }} &=&\delta \left( a_{22}-a_{12}\alpha _{5}\right) \\
\alpha _{5}^{^{\prime }} &=&a_{33}\left( a_{12}+a_{22}\alpha _{5}\right) .
\end{eqnarray*}%
Choose $a_{12}=a_{21}=0,a_{11}=a_{22}=1,a_{31}=-\alpha _{1}$ and $a_{32}=%
\frac{-\alpha _{2}}{\alpha _{5}}$ (note that $\alpha _{5}\neq 0$ since $%
1+\alpha _{5}^{2}=0$)$.$ It follows that $\alpha _{1}^{^{\prime }}=\alpha
_{2}^{^{\prime }}=0,\alpha _{4}^{^{\prime }}=1,\alpha _{5}^{^{\prime
}}=\alpha _{5}$ and $1+\alpha _{5}^{2}=1+\alpha _{5}^{\prime 2}=0.$

So we assume that $\theta :=\alpha _{3}\sum_{a,b}+\sum_{a,c}+\alpha
_{5}\sum_{b,c}$ such that $1+\alpha _{5}^{2}=0.$ If $\alpha _{3}=0$ we get
the algebra :%
\begin{equation*}
\begin{tabular}{|l|}
\hline
$a^{2}=c\text{ },\text{ }b^{2}=c\text{ },\text{ }a\circ c=d,b\circ c=\alpha
_{5}d,1+\alpha _{5}^{2}=0.$ \\ \hline
\end{tabular}%
\end{equation*}%
The change of basis $a\longrightarrow a,b\longrightarrow -\alpha
_{5}b,c\longrightarrow c$ and $d\longrightarrow d,$ yields to%
\begin{equation*}
\begin{tabular}{|l|l|}
\hline
$J_{4,9}(K)$ & $a^{2}=c\text{ },\text{ }b^{2}=-c\text{ },\text{ }a\circ c=d$ 
$,$ $b\circ c=d$ $.$ \\ \hline
\end{tabular}%
\end{equation*}%
If $\alpha _{3}\neq 0$ , by consider the previous change of the basis we get
the algebra :%
\begin{equation*}
\begin{tabular}{|l|}
\hline
$a^{2}=c\text{ },\text{ }b^{2}=-c\text{ },\text{ }a\circ c=d$ $,b\circ c=d$ $%
,$ $a\circ b=\alpha _{3}d.$ \\ \hline
\end{tabular}%
\end{equation*}%
Now putting $a^{\prime }=\alpha _{3}a,b^{\prime }=\alpha _{3}b,c^{\prime
}=\alpha _{3}^{2}c,d^{\prime }=\alpha _{3}^{3}d$ we get the same
multiplication table, but the parameter has changed to 1. So we may assume $%
\alpha _{3}=1$, and we get only algebra%
\begin{equation*}
\begin{tabular}{|l|l|}
\hline
$J_{4,10}(K)$ & $a^{2}=c\text{ },\text{ }b^{2}=-c\text{ },\text{ }a\circ c=d$
$,$ $b\circ c=d$ $,$ $a\circ b=d.$ \\ \hline
\end{tabular}%
\end{equation*}

\underline{Over real field $%
\mathbb{R}
:$}

There is no $\alpha _{5}\in 
\mathbb{R}
$ such that $1+\alpha _{5}^{2}=0.$

We are completed 1\textbf{- d}imensional Central extension of $%
J_{3,3}^{\alpha }$ $\alpha \in K^{\ast }\diagup \left( K^{\ast }\right) ^{2}$%
over algebraic closed field $K$ and characteristic $K\neq 2$ since $K^{\ast
}\diagup \left( K^{\ast }\right) ^{2}=\{1\}.$ To complete classification
over $%
\mathbb{R}
,$ it remains to consider case $\alpha =-1.$

\underline{\underline{\textbf{Case }$\alpha =-1$ :}}

We get : 
\begin{eqnarray*}
\alpha _{1}^{^{\prime }} &=&a_{11}^{2}\alpha _{1}+a_{21}^{2}\alpha
_{2}+2a_{11}a_{21}\alpha _{2}+a_{11}a_{31}+a_{21}a_{31}\alpha _{5} \\
\alpha _{2}^{^{\prime }} &=&a_{12}^{2}\alpha _{1}+a_{22}^{2}\alpha
_{2}+2a_{11}a_{21}\alpha _{2}+a_{12}a_{32}+a_{22}a_{32}\alpha _{5} \\
\alpha _{3}^{^{\prime }} &=&a_{11}a_{12}\alpha _{1}+a_{22}a_{21}\alpha
_{2}+(a_{11}a_{22}+a_{21}a_{12})\alpha _{3} \\
&&+a_{11}a_{32}+a_{31}a_{12}+(a_{21}a_{32}+a_{31}a_{22})\alpha _{5} \\
\alpha _{4}^{^{\prime }} &=&-\delta \left( a_{22}+a_{12}\alpha _{5}\right) \\
\alpha _{5}^{^{\prime }} &=&a_{33}\left( a_{12}+a_{22}\alpha _{5}\right) .
\end{eqnarray*}%
As we do before

\underline{If $1-\alpha _{5}^{2}\neq 0:$}

We can choose $a_{22}=\frac{-1}{\delta \left( 1-\alpha _{5}^{2}\right) }$
and $a_{12}=\frac{\alpha 5}{\delta \left( 1-\alpha _{5}^{2}\right) }$ to get 
$\alpha _{4}^{^{\prime }}=1$ and $\alpha _{5}^{^{\prime }}=0.$ So may assume
that $\alpha _{4}=1$ and $\alpha _{5}=0,$ to fix $\theta $, choose $%
a_{12}=a_{21}=0$ and $a_{11}=a_{22}=1.$ It follows that 
\begin{eqnarray*}
\alpha _{1}^{^{\prime }} &=&\alpha _{1}+a_{31} \\
\alpha _{2}^{^{\prime }} &=&\alpha _{2} \\
\alpha _{3}^{^{\prime }} &=&\alpha _{3}+a_{32} \\
\alpha _{4}^{^{\prime }} &=&1 \\
\alpha _{5}^{^{\prime }} &=&0.
\end{eqnarray*}%
Choose $a_{31}=-\alpha _{2}-\alpha _{1}$ and $a_{32}=-\alpha _{3}.$

So we can assume that $\theta =-\alpha _{2}\left(
\sum_{a,a}-\sum_{b,b}\right) +\sum_{a,c}$ modulo $\sum_{a,a}-\sum_{b,b},$
then $\theta =\sum_{a,c}.$ We get algebra%
\begin{equation*}
\begin{tabular}{|l|l|}
\hline
$J_{4,9}(%
\mathbb{R}
)$ & $a^{2}=c\text{ },\text{ }b^{2}=-c\text{ },\text{ }a\circ c=d.$ \\ \hline
\end{tabular}%
\end{equation*}%
\underline{If $1-\alpha _{5}^{2}=0:$}

We have 
\begin{eqnarray*}
\alpha _{1}^{^{\prime }} &=&a_{11}^{2}\alpha _{1}+a_{21}^{2}\alpha
_{2}+2a_{11}a_{21}\alpha _{2}+a_{11}a_{31}+a_{21}a_{31}\alpha _{5} \\
\alpha _{2}^{^{\prime }} &=&a_{12}^{2}\alpha _{1}+a_{22}^{2}\alpha
_{2}+2a_{11}a_{21}\alpha _{2}+a_{12}a_{32}+a_{22}a_{32}\alpha _{5} \\
\alpha _{3}^{^{\prime }} &=&a_{11}a_{12}\alpha _{1}+a_{22}a_{21}\alpha
_{2}+(a_{11}a_{22}+a_{21}a_{12})\alpha _{3} \\
&&+a_{11}a_{32}+a_{31}a_{12}+(a_{21}a_{32}+a_{31}a_{22})\alpha _{5} \\
\alpha _{4}^{^{\prime }} &=&-\delta \left( a_{22}+a_{12}\alpha _{5}\right) \\
\alpha _{5}^{^{\prime }} &=&a_{33}\left( a_{12}+a_{22}\alpha _{5}\right) .
\end{eqnarray*}%
Choose $a_{12}=a_{21}=0,a_{11}=a_{22}=1,a_{31}=-\alpha _{1}$ and $a_{32}=%
\frac{-\alpha _{2}}{\alpha _{5}}$ (note that $\alpha _{5}\neq 0$ since $%
1-\alpha _{5}^{2}=0$)$.$ It follows that $\alpha _{1}^{^{\prime }}=\alpha
_{2}^{^{\prime }}=0,\alpha _{4}^{^{\prime }}=1,\alpha _{5}^{^{\prime
}}=\alpha _{5}$ and $1-\alpha _{5}^{2}=1-\alpha _{5}^{\prime 2}=0.$

So we assume that $\theta :=\alpha _{3}\sum_{a,b}+\sum_{a,c}+\alpha
_{5}\sum_{b,c}$ such that $1-\alpha _{5}^{2}=0.$ If $\alpha _{3}=0$ we get
the algebra :%
\begin{equation*}
\begin{tabular}{|l|}
\hline
$a^{2}=c\text{ },\text{ }b^{2}=-c\text{ },\text{ }a\circ c=d,b\circ c=\alpha
_{5}d,1-\alpha _{5}^{2}=0.$ \\ \hline
\end{tabular}%
\end{equation*}%
The change of basis $a\longrightarrow a,b\longrightarrow \alpha
_{5}b,c\longrightarrow c$ and $d\longrightarrow d,$ yields to%
\begin{equation*}
\begin{tabular}{|l|l|}
\hline
$J_{4,10}(%
\mathbb{R}
)$ & $a^{2}=c\text{ },\text{ }b^{2}=-c\text{ },\text{ }a\circ c=d$ $,$ $%
b\circ c=d$ $.$ \\ \hline
\end{tabular}%
\end{equation*}%
If $\alpha _{3}\neq 0$ , by consider the previous change of the basis we get
the algebra :%
\begin{equation*}
\begin{tabular}{|l|}
\hline
$a^{2}=c\text{ },\text{ }b^{2}=-c\text{ },\text{ }a\circ c=d$ $,b\circ c=d$ $%
,$ $a\circ b=\alpha _{3}d.$ \\ \hline
\end{tabular}%
\end{equation*}%
Now putting $a^{\prime }=\alpha _{3}a,b^{\prime }=\alpha _{3}b,c^{\prime
}=\alpha _{3}^{2}c,d^{\prime }=\alpha _{3}^{3}d$ we get the same
multiplication table, but the parameter has changed to 1. So we may assume $%
\alpha _{3}=1$, and we get only algebra%
\begin{equation*}
\begin{tabular}{|l|l|}
\hline
$J_{4,11}(%
\mathbb{R}
)$ & $a^{2}=c\text{ },\text{ }b^{2}=-c\text{ },\text{ }a\circ c=d$ $,$ $%
b\circ c=d$ $,$ $a\circ b=d.$ \\ \hline
\end{tabular}%
\end{equation*}

\textbf{Remark} :

One can consider the algebra $J:a\circ b=c$ instead of $J_{3,3}^{\alpha }$
over algebaic closed field $K$ and $ch(K)\neq 2$, by change of basis $%
a\longrightarrow a+\sigma b,b\longrightarrow a-\sigma b,c\longrightarrow 2c$
and $\sigma ^{2}+1=0.$ Also for $J_{3,3}^{\alpha },\alpha =-1$ in the case
of the Real field $%
\mathbb{R}
,$ by change of basis $a\longrightarrow a+b,b\longrightarrow
a-b,c\longrightarrow 2c.$ Calculations by this consideration may by some
simple. Here we get that $H^{2}(J,K)$ consists of $\theta =\alpha
_{1}\sum_{a,a}+\alpha _{2}\sum_{b,b}+\alpha _{3}\sum_{a,c}+\alpha
_{4}\sum_{b,c}$ , and the automorphism group consists of 
\begin{equation*}
\phi =\left( 
\begin{array}{ccc}
a_{11} & a_{12} & 0 \\ 
a_{21} & a_{22} & 0 \\ 
a_{31} & a_{32} & a_{11}a_{22}+a_{21}a_{12}%
\end{array}%
\right) ,a_{11}^{2}a_{22}^{2}-a_{12}^{2}a_{21}^{2}\neq 0\text{ and \ }%
a_{11}a_{21}=a_{12}a_{22}=0\text{ }
\end{equation*}%
Then$\ \ \phi \theta =\alpha _{1}^{^{\prime }}\sum_{a,a}+\alpha
_{2}^{^{\prime }}\sum_{b,b}+\alpha _{3}^{^{\prime }}\sum_{a,c}+\alpha
_{4}^{^{\prime }}\sum_{b,c}$%
\begin{eqnarray*}
\alpha _{1}^{^{\prime }} &=&a_{11}^{2}\alpha _{1}+a_{21}^{2}\alpha
_{2}+2a_{11}a_{31}\alpha _{3}+2a_{21}a_{31}\alpha _{4} \\
\alpha _{2}^{^{\prime }} &=&a_{12}^{2}\alpha _{1}+a_{22}^{2}\alpha
_{2}+2a_{12}a_{32\alpha _{3}}+2a_{22}a_{32}\alpha _{4} \\
\alpha _{3}^{^{\prime }} &=&\left( a_{11}a_{22}+a_{21}a_{12}\right) \left(
a_{11}\alpha _{3}+a_{21}\alpha _{4}\right) \\
\alpha _{4}^{^{\prime }} &=&\left( a_{11}a_{22}+a_{21}a_{12}\right) \left(
a_{12}\alpha _{3}+a_{22}\alpha _{4}\right)
\end{eqnarray*}%
We find that there exist only three orbits, $\theta _{1}=\sum_{a,c},\theta
_{2}=\sum_{a,c}+\sum_{b,b}$ and $\theta _{3}=\sum_{a,c}+\sum_{b,c},$ which
be the same number of orbits we get for these algebras.

\item 1\textbf{- dimensional Central extension of }$J_{3,4}$

Here $Z^{2}(J_{3,4},K)$ is spanned by $\sum_{a,a},\sum_{a,b}$ and $%
\sum_{a,c}+\sum_{b,b}.$ Moreover, $\delta C^{1}(J_{3,4},K)$ is spanned by $%
\sum_{a,a}$ and $\sum_{a,b}$. Then $H^{2}(J_{3,4},K)$ is spanned by $\theta
:=\sum_{a,c}+\sum_{b,b},$ so we get only one algebra%
\begin{equation*}
\begin{tabular}{|l|l|}
\hline
$J_{4,11}(K)$ & $a^{2}=b\text{ },\text{ }a\circ b=c$ $,$ $a\circ c=d$ $,$ $%
b^{2}=d.$ \\ \hline
\end{tabular}%
\end{equation*}%
\begin{equation*}
\begin{tabular}{|l|l|}
\hline
$J_{4,12}(%
\mathbb{R}
)$ & $a^{2}=b\text{ },\text{ }a\circ b=c$ $,$ $a\circ c=d$ $,$ $b^{2}=d.$ \\ 
\hline
\end{tabular}%
\end{equation*}

\item 2\textbf{- dimensional Central extension of }$J_{2,1}$

Here $H^{2}(J_{2,1},K)$ is spanned by $\sum_{a,a},\sum_{a,b}$ and $%
\sum_{b,b},$ then $G_{2}(H^{2}(J_{2,1},K))$ consists of :

$\theta :=\alpha _{1}$ $\sum_{a,a}\wedge \sum_{a,b}+\alpha
_{2}\sum_{a,a}\wedge \sum_{b,b}+\alpha _{3}\sum_{a,b}\wedge \sum_{b,b}.$

The autommorphism group,$Aut(J_{2,1}),$ consists of $:$%
\begin{equation*}
\phi =\left( 
\begin{array}{cc}
a_{11} & a_{12} \\ 
a_{21} & a_{22}%
\end{array}%
\right) ,\det \phi \neq 0.
\end{equation*}%
Now we need to find representatives of $U_{2}(J_{2,1})\diagup Aut(J_{2,1}),$
write

$\phi \theta :=\alpha _{1}^{\prime }$ $\sum_{a,a}\wedge \sum_{a,b}+\alpha
_{2}^{\prime }\sum_{a,a}\wedge \sum_{b,b}+\alpha _{3}^{\prime
}\sum_{a,b}\wedge \sum_{b,b}.$

Then 
\begin{eqnarray*}
\alpha _{1}^{\prime } &=&\left( a_{11}^{2}\alpha _{1}+a_{11}a_{21}\alpha
_{2}+a_{21}^{2}\alpha _{3}\right) \det \phi \\
\alpha _{2}^{\prime } &=&\left( 2a_{11}a_{12}\alpha
_{1}+(a_{11}a_{22}+a_{12}a_{21})\alpha _{2}+2a_{21}a_{22}\alpha _{3}\right)
\det \phi \\
\alpha _{3}^{\prime } &=&\left( a_{12}^{2}\alpha _{1}+a_{12}a_{22}\alpha
_{2}+a_{22}^{2}\alpha _{3}\right) \det \phi .
\end{eqnarray*}%
Without loss of generality, we can assume that $\alpha _{1}=1$ since $\left(
\alpha _{1},\alpha _{2},\alpha _{3}\right) $ $\neq \left( 0,0,0\right) .$
Choose $a_{11}=a_{22}=1,a_{21}=0,a_{12}=\frac{-\alpha _{2}}{2}$ to get $%
\alpha _{1}^{\prime }=1$ and $\alpha _{2}^{\prime }=0.$

So we may assume $\theta =$ $\sum_{a,a}\wedge \sum_{a,b}+\alpha
_{3}\sum_{a,b}\wedge \sum_{b,b}.$

\underline{If $\alpha _{3}=0:$}

Then we get the cocycle $\theta _{1}=\sum_{a,a}\wedge \sum_{a,b}.$

\underline{If $\alpha _{3}\neq 0:$}%
\begin{eqnarray*}
\alpha _{1}^{\prime } &=&\left( a_{11}^{2}\alpha _{1}+a_{21}^{2}\alpha
_{3}\right) \det \phi \\
\alpha _{2}^{\prime } &=&\left( 2a_{11}a_{12}\alpha _{1}+2a_{21}a_{22}\alpha
_{3}\right) \det \phi \\
\alpha _{3}^{\prime } &=&\left( a_{12}^{2}\alpha _{1}+a_{22}^{2}\alpha
_{3}\right) \det \phi .
\end{eqnarray*}

To fix $\alpha _{2}^{\prime }=0$ choose $a_{21}=a_{12}=0,$ then 
\begin{eqnarray*}
\alpha _{1}^{\prime } &=&\det \phi \left( a_{11}^{2}\right) \\
\alpha _{2}^{\prime } &=&0 \\
\alpha _{3}^{\prime } &=&\det \phi \left( a_{22}^{2}\alpha _{3}\right) .
\end{eqnarray*}

\underline{Over algebraic closed field $K$ and characteristic $K\neq 2:$}

In this case every element is a square, then we choose $a_{11}=1,a_{22}=%
\frac{1}{\sqrt{-\alpha _{3}}}$ and then dividing by $\det \phi .$ We get $%
\theta _{2}=$ $\sum_{a,a}\wedge \sum_{a,b}-\sum_{a,b}\wedge
\sum_{b,b}=\left( \sum_{a,a}+\sum_{b,b}\right) \wedge \sum_{a,b}.$ We claim
that $\theta _{1}$ and $\theta _{2}$ are not in the same orbit. Let $\phi
\in Aut(J_{2,1})$ and $\lambda \in K^{\ast }$ such that $\phi $ $\theta
_{1}=\lambda \theta _{2},$then%
\begin{eqnarray}
\lambda &=&\left( a_{11}^{2}\right) \det \phi  \notag \\
0 &=&\left( 2a_{11}a_{12}\right) \det \phi  \label{m} \\
-\lambda &=&\left( a_{12}^{2}\right) \det \phi .  \notag
\end{eqnarray}%
Equation $\left( \ref{m}\right) $ ensures that $\lambda =0.$ So we get only
two $Aut(J_{2,1})$-orbits over $K,$ yielding the two algebras%
\begin{equation*}
\begin{tabular}{|l|l|}
\hline
$J_{4,12}(K)$ & $a^{2}=c\text{ },\text{ }a\circ b=d$ $.$ \\ \hline
\end{tabular}%
\end{equation*}%
\begin{equation*}
\begin{tabular}{|l|l|}
\hline
$J_{4,13}(K)$ & $a^{2}=c\text{ },\text{ }b^{2}=c$ $,$ $a\circ b=d$ $.$ \\ 
\hline
\end{tabular}%
\end{equation*}

\underline{Over real field $%
\mathbb{R}
:$}

If $\alpha _{3}<0,$ choose $a_{11}=1,a_{22}=\frac{1}{\sqrt{-\alpha _{3}}}$
and then dividing by $\det \phi .$We get :

$\theta _{2}=$ $\sum_{a,a}\wedge \sum_{a,b}-\sum_{a,b}\wedge
\sum_{b,b}=\left( \sum_{a,a}+\sum_{b,b}\right) \wedge \sum_{a,b}.$

If $\alpha _{3}>0,$ choose $a_{11}=1,a_{22}=\frac{1}{\sqrt{\alpha _{3}}}$
and then dividing by $\det \phi .$ We get :

$\theta _{3}=$ $\sum_{a,a}\wedge \sum_{a,b}+\sum_{a,b}\wedge
\sum_{b,b}=\left( \sum_{a,a}-\sum_{b,b}\right) \wedge \sum_{a,b}.$

Clearly $\theta _{1}$ is not conjugate to any of $\theta _{2}$ and $\theta
_{3}.$ It remains to check if $\theta _{2}$ and $\theta _{3}$ are in the
same $Aut(J_{2,1})$-orbit or not. Let $\phi \in Aut(J_{2,1})$ and $\lambda
\in 
\mathbb{R}
^{\ast }$ such that $\phi $ $\theta _{3}=\lambda \theta _{2},$then%
\begin{eqnarray}
\lambda &=&\left( a_{11}^{2}+a_{21}^{2}\right) \det \phi  \label{m1} \\
0 &=&\left( 2a_{11}a_{12}+2a_{21}a_{22}\right) \det \phi  \notag \\
-\lambda &=&\left( a_{12}^{2}+a_{22}^{2}\right) \det \phi .  \label{m2}
\end{eqnarray}%
Equations $\left( \ref{m1}\right) $ and $\left( \ref{m2}\right) $ Lead to $%
\lambda =0$ and $\phi =0.$ So we get only three $Aut(J_{2,1})$-orbits over $%
\mathbb{R}
,$ yielding the three algebras%
\begin{equation*}
\begin{tabular}{|l|l|}
\hline
$J_{4,13}(%
\mathbb{R}
)$ & $a^{2}=c\text{ },\text{ }a\circ b=d$ $.$ \\ \hline
\end{tabular}%
\end{equation*}%
\begin{equation*}
\begin{tabular}{|l|l|}
\hline
$J_{4,14}(%
\mathbb{R}
)$ & $a^{2}=c\text{ },\text{ }b^{2}=c$ $,$ $a\circ b=d$ $.$ \\ \hline
\end{tabular}%
\end{equation*}%
\begin{equation*}
\begin{tabular}{|l|l|}
\hline
$J_{4,15}(%
\mathbb{R}
)$ & $a^{2}=c\text{ },\text{ }b^{2}=-c$ $,$ $a\circ b=d$ $.$ \\ \hline
\end{tabular}%
\end{equation*}

Summarize our results in the following theorems :
\end{enumerate}

\begin{theorem}
Up to isomorphism there exist $13$ nilpotent Jordan algebras of dimension $4$
over algebraic closed field $K$\ and $ch(K)\neq 2$ which are isomorphic to
one of the following \ pairwise non-isomorphic nilpotent Jordan algebras :%
\begin{equation*}
\begin{tabular}{|l|}
\hline
\textbf{Nilpotent Jordan algebras with Centeral component} \\ \hline
$J_{4,1}=J_{3,1}\oplus J_{1,1}.$ $:$All multiplications are zero. \\ \hline
$J_{4,2}=J_{3,2}\oplus J_{1,1}.$ $:$ $a^{2}=b.$ \\ \hline
$J_{4,3}=J_{3,3}\oplus J_{1,1}.$ $:$ $a^{2}=c$ $,$ $b^{2}=c.$ \\ \hline
$J_{4,4}=J_{3,4}\oplus J_{1,1}.$ $:$ $a^{2}=b$ $,$ $a\circ b=c.$ \\ \hline
\textbf{Nilpotent Jordan algebras without Centeral component} \\ \hline
$J_{4,5}$ $:$ $a^{2}=d$ $,$ $b^{2}=d$ $,c^{2}=d$.$.$ \\ \hline
$J_{4,6}$ $:$ $a^{2}=b\text{ },\text{ }b\circ c=d.$ \\ \hline
$J_{4,7}$ $:$ $a^{2}=b\text{ },\text{ }a\circ b=d\text{ },\text{ }c^{2}=d.$
\\ \hline
$J_{4,8}$ $:$ $a^{2}=c\text{ },\text{ }b^{2}=c\text{ },\text{ }a\circ c=d.$
\\ \hline
$J_{4,9}$ $:a^{2}=c\text{ },\text{ }b^{2}=-c\text{ },\text{ }a\circ c=d$ $,$ 
$b\circ c=d$ $.$ \\ \hline
$J_{4,10}$ $:$ $a^{2}=c\text{ },\text{ }b^{2}=-c\text{ },\text{ }a\circ c=d$ 
$,$ $b\circ c=d$ $,$ $a\circ b=d.$ \\ \hline
$J_{4,11}$ $:$ $a^{2}=b\text{ },\text{ }a\circ b=c$ $,$ $a\circ c=d$ $,$ $%
b^{2}=d.$ \\ \hline
$J_{4,12}$ $:$ $a^{2}=c\text{ },\text{ }a\circ b=d$ $.$ \\ \hline
$J_{4,13}$ $:$ $a^{2}=c\text{ },\text{ }b^{2}=c$ $,$ $a\circ b=d$ $.$ \\ 
\hline
\end{tabular}%
\end{equation*}
\end{theorem}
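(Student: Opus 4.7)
The plan is to apply Theorem \ref{h2} (the Jordan-algebra analogue of the Skjelbred-Sund theorem) systematically: every $4$-dimensional nilpotent Jordan algebra $\tilde{J}$ either has a central component, in which case $\tilde{J} = J \oplus Kx$ for some $3$-dimensional nilpotent Jordan algebra $J$, or it is a step-$r$ descendant of some nilpotent Jordan algebra $J$ of dimension $4-r$ with $r \in \{1,2,3\}$. The $r=3$ case is ruled out immediately since the only candidate would be $J_{1,1}$, and $\dim H^2(J_{1,1},K)=1$. The case with central components contributes exactly the four algebras $J_{4,1},\ldots,J_{4,4}$ by taking the direct sum of each $3$-dimensional nilpotent Jordan algebra (classified in the previous subsection) with $J_{1,1}$; these are pairwise non-isomorphic because their summands are.

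For the case without central components I would treat each base algebra $J$ of dimension $\leq 3$ in turn. For each $J \in \{J_{2,1}, J_{2,2}, J_{3,1}, J_{3,2}, J_{3,3}, J_{3,4}\}$ (observing that $H^2(J_{2,2},K)$ is $1$-dimensional so admits no $2$-dimensional allowable subspace, and $J_{1,1}$ was already excluded), I would carry out the six steps of the classification procedure: compute a basis of $Z^2(J,K)$ by imposing the cocycle identity on a generic symmetric bilinear form, quotient by the $2$-coboundaries $\delta C^1(J,K)$ (spanned by $\sum_{e_i,e_j}$ such that $e_i \circ e_j \in J^2$) to get $H^2(J,K)$, determine $Z(J)$, write down $\mathrm{Aut}(J)$ as a matrix group, and compute its action on $H^2(J,K)$ via $\phi\theta(x,y)=\theta(\phi x,\phi y)$. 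Then I would enumerate orbits of allowable subspaces by normalising the cocycle coefficients $\alpha_i$ one at a time, choosing the matrix entries of $\phi$ so as to kill successive components while preserving those already fixed. For $r=1$ this gives the descendants $J_{4,5}$ (from $J_{3,1}$), $J_{4,6},J_{4,7}$ (from $J_{3,2}$), $J_{4,8},J_{4,9},J_{4,10}$ (from $J_{3,3}$), and $J_{4,11}$ (from $J_{3,4}$), while the $r=2$ extensions of $J_{2,1}$ yield $J_{4,12},J_{4,13}$, totalling nine algebras without central components.

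The main obstacle is the orbit analysis for $J_{3,3}$, where $H^2(J,K)$ is $5$-dimensional modulo a $1$-dimensional coboundary space and $\mathrm{Aut}(J)$ is constrained by the quadratic relations $a_{11}^2 + a_{21}^2 = a_{33}$ etc. Here the normalisation must be split according to whether $1+\alpha_5^2$ vanishes; the algebraic closure of $K$ together with $\mathrm{char}\,K \neq 2$ is used essentially both to diagonalise symmetric forms and to extract square roots when rescaling. The case $1+\alpha_5^2=0$ uses a square root of $-1$ in $K$ and produces the two extra algebras $J_{4,9}, J_{4,10}$ that distinguish the algebraically closed situation from the real one. A secondary obstacle is verifying pairwise non-isomorphism of the thirteen algebras; this can be handled by comparing the numerical invariants $\dim Z$, $\dim J^2$, $\dim \mathrm{Ann}(J)$, associativity, and nilindex, with any remaining ambiguities resolved either by the orbit computation itself (two cocycles yielding isomorphic extensions must, by Lemma \ref{n}, lie in the same $\mathrm{Aut}(J)$-orbit) or by the Gr\"obner-basis algorithm of Section~$6$.

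Finally, I would organise the argument as an exhaustive case analysis indexed by the base algebra and the extension dimension, and conclude by noting that every nilpotent Jordan algebra of dimension $4$ over $K$ appears in exactly one of the cases, giving the claimed count of $13$.
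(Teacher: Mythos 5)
Your proposal follows essentially the same route as the paper: the Skjelbred--Sund analogue (Theorem \ref{h2}) applied case by case to central extensions of the lower-dimensional nilpotent Jordan algebras, with the direct sums $J_{3,i}\oplus J_{1,1}$ handling the central-component case, $1$-dimensional extensions of $J_{3,1},\dots,J_{3,4}$ and $2$-dimensional extensions of $J_{2,1}$ handling the rest, and the delicate orbit computation occurring for $J_{3,3}$ exactly as you identify. Your explicit dismissal of the $r=2$ extensions of $J_{2,2}$ and the $r=3$ extensions of $J_{1,1}$ on dimension grounds is a small point the paper leaves implicit, but the argument is otherwise the same.
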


We see that all nilpotent Jordan algebras of dimension $4$ over algebraic
closed field $K$\ and $ch(K)\neq 2$ are associative except $%
J_{4,6},J_{4,8},J_{4,9}$ and $J_{4,10}.$

\begin{theorem}
Up to isomorphism there exist $9$ commutative nilpotent associative algebras
of dimension $4$ over algebraic closed field $K$\ and $ch(K)\neq 2$ which
are isomorphic to one of the following \ pairwise non-isomorphic commutative
nilpotent associative algebras :%
\begin{equation*}
\begin{tabular}{|l|}
\hline
\textbf{Nilpotent Jordan algebras with Centeral component} \\ \hline
$J_{4,1}=J_{3,1}\oplus J_{1,1}.$ $:$All multiplications are zero. \\ \hline
$J_{4,2}=J_{3,2}\oplus J_{1,1}.$ $:$ $a^{2}=b.$ \\ \hline
$J_{4,3}=J_{3,3}\oplus J_{1,1}.$ $:$ $a^{2}=c$ $,$ $b^{2}=c.$ \\ \hline
$J_{4,4}=J_{3,4}\oplus J_{1,1}.$ $:$ $a^{2}=b$ $,$ $a\circ b=c.$ \\ \hline
\textbf{Nilpotent Jordan algebras without Centeral component} \\ \hline
$J_{4,5}$ $:$ $a^{2}=d$ $,$ $b^{2}=d$ $,c^{2}=d$.$.$ \\ \hline
$J_{4,7}$ $:$ $a^{2}=b\text{ },\text{ }a\circ b=d\text{ },\text{ }c^{2}=d.$
\\ \hline
$J_{4,11}$ $:$ $a^{2}=b\text{ },\text{ }a\circ b=c$ $,$ $a\circ c=d$ $,$ $%
b^{2}=d.$ \\ \hline
$J_{4,12}$ $:$ $a^{2}=c\text{ },\text{ }a\circ b=d$ $.$ \\ \hline
$J_{4,13}$ $:$ $a^{2}=c\text{ },\text{ }b^{2}=c$ $,$ $a\circ b=d$ $.$ \\ 
\hline
\end{tabular}%
\end{equation*}
\end{theorem}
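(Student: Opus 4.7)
The plan is to leverage the preceding theorem, which classifies all $13$ nilpotent Jordan algebras of dimension $4$ over an algebraically closed field $K$ with $\text{ch}(K)\neq 2$. Since every commutative nilpotent associative algebra is a commutative algebra satisfying the Jordan identity (associativity implies $x^{2}\circ (x\circ y)=(x^{2}\circ x)\circ y=x\circ (x^{2}\circ y)$), it is in particular a nilpotent Jordan algebra. Therefore the classification reduces to determining, among the $13$ algebras $J_{4,1},\ldots,J_{4,13}$, which are associative. Two algebras from this list are never isomorphic (by the previous theorem), so no further identifications can occur when we pass to the associative subfamily.

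The core of the argument is a case-by-case verification of the associativity identity $(x\circ y)\circ z=x\circ (y\circ z)$ on each of the $13$ algebras. Because the product is bilinear and symmetric, it suffices to check this on ordered triples $(e_{i},e_{j},e_{k})$ of basis elements with $i\leq j\leq k$, and because in each algebra the second derived ideal $c^{3}$ is contained in the annihilator, most triples vanish on both sides trivially. I would run through $J_{4,1},\ldots,J_{4,5}$, $J_{4,7}$, $J_{4,11}$, $J_{4,12}$, $J_{4,13}$ and confirm associativity on the handful of non-trivial basis triples; for instance in $J_{4,11}$ one checks $(a\circ a)\circ b=b\circ b=d$ and $a\circ (a\circ b)=a\circ c=d$, and the other non-trivial triples reduce similarly.

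For the remaining four algebras I would exhibit an explicit violation of associativity. In $J_{4,6}$ one has $(a\circ a)\circ c=b\circ c=d$ while $a\circ (a\circ c)=a\circ 0=0$. In $J_{4,8}$ one has $(b\circ b)\circ a=c\circ a=d$ while $b\circ (b\circ a)=b\circ 0=0$. In $J_{4,9}$ the same test with $a\circ a=c$ gives $(a\circ a)\circ b=c\circ b=0$ but also one can use $(b\circ b)\circ a=-c\circ a=-d$ against $b\circ (b\circ a)=0$. In $J_{4,10}$ an analogous computation using the product $b\circ b=-c$ and $b\circ a=d$ shows the associator is nonzero. These four algebras are therefore excluded, leaving precisely the nine algebras listed in the theorem.

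The step I expect to be the most delicate is the bookkeeping in the associativity check for $J_{4,11}$, $J_{4,12}$ and $J_{4,13}$: these have the richest multiplication tables, so it is easy to miss a triple $(e_{i},e_{j},e_{k})$. The strategy to keep this manageable is to exploit the grading by $c^{m}$: whenever the intermediate product $x\circ y$ or $y\circ z$ lies in $c^{3}$ and $z$ or $x$ lies in $c^{2}$, the associator vanishes automatically, so only finitely many triples require explicit verification. No other difficulty arises, since uniqueness up to isomorphism is already guaranteed by the preceding Jordan classification.
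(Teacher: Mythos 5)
Your proposal takes essentially the same route as the paper: the paper obtains this theorem by restricting the preceding classification of the $13$ nilpotent Jordan algebras of dimension $4$ to those that are associative, simply asserting that $J_{4,6},J_{4,8},J_{4,9}$ and $J_{4,10}$ are the non-associative ones, and your case-by-case associator checks supply the verification. The computations are correct (up to the harmless slip that in $J_{4,9}$ one has $c\circ b=d$ rather than $0$, which only makes the associator $(a\circ a)\circ b - a\circ(a\circ b)=d$ nonzero anyway, and your alternative witness $(b\circ b)\circ a=-d\neq 0=b\circ(b\circ a)$ is valid), so the argument is sound.
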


\begin{theorem}
Up to isomorphism there exist $17$ nilpotent Jordan algebras of dimension $4$
over $%
\mathbb{R}
$ which are isomorphic to one of the following \ pairwise non-isomorphic
nilpotent Jordan algebras :%
\begin{equation*}
\begin{tabular}{|l|}
\hline
\textbf{Nilpotent Jordan algebras with Centeral component} \\ \hline
$J_{4,1}=J_{3,1}\oplus J_{1,1}.$ $:$All multiplications are zero. \\ \hline
$J_{4,2}=J_{3,2}\oplus J_{1,1}.$ $:$ $a^{2}=b.$ \\ \hline
$J_{4,3}^{\alpha =\pm 1}=J_{3,3}^{\alpha =\pm 1}\oplus J_{1,1}.$ $:$ $%
a^{2}=c $ $,$ $b^{2}=\alpha c$ $.$ $\ \ \ \ \ \ \ \ \left( \alpha =\pm
1\right) $ \\ \hline
$J_{4,4}=J_{3,4}\oplus J_{1,1}.$ $:$ $a^{2}=b$ $,$ $a\circ b=c.$ \\ \hline
\textbf{Nilpotent Jordan algebras without Centeral component} \\ \hline
$J_{4,5}^{\alpha =\pm 1}$ $:$ $a^{2}=d$ $,$ $b^{2}=d$ $,c^{2}=\alpha d$ $.$ $%
\ \ \ \ \ \ \ \ \ \ \ \ \ \ \ \ \ \ \ \ \ \ \ \ \ \ \ \left( \alpha =\pm
1\right) $ \\ \hline
$J_{4,6}$ $:$ $a^{2}=b\text{ },\text{ }b\circ c=d.$ \\ \hline
$J_{4,7}$ $:$ $a^{2}=b\text{ },\text{ }a\circ b=d\text{ },\text{ }c^{2}=d.$
\\ \hline
$J_{4,8}^{\alpha =\pm 1}$ $:$ $a^{2}=c\text{ },\text{ }b^{2}=\alpha c\text{ }%
,\text{ }a\circ c=d.$ $\ \ \ \ \ \ \ \ \ \ \ \ \ \ \ \ \ \ \ \ \ \ \ \
\left( \alpha =\pm 1\right) $ \\ \hline
$J_{4,9}$ $:a^{2}=c\text{ },\text{ }b^{2}=-c\text{ },\text{ }a\circ c=d$ $,$ 
$b\circ c=d$ $.$ \\ \hline
$J_{4,10}$ $:$ $a^{2}=c\text{ },\text{ }b^{2}=-c\text{ },\text{ }a\circ c=d$ 
$,$ $b\circ c=d$ $,$ $a\circ b=d.$ \\ \hline
$J_{4,11}$ $:$ $a^{2}=b\text{ },\text{ }a\circ b=c$ $,$ $a\circ c=d$ $,$ $%
b^{2}=d.$ \\ \hline
$J_{4,12}$ $:$ $a^{2}=c\text{ },\text{ }a\circ b=d$ $.$ \\ \hline
$J_{4,13}^{,\alpha =\pm 1.}$ $:$ $a^{2}=c\text{ },\text{ }b^{2}=\alpha c$ $,$
$a\circ b=d$ $.$ $\ \ \ \ \ \ \ \ \ \ \ \ \ \ \ \ \ \ \ \ \ \ \ \ \ \left(
\alpha =\pm 1\right) $ \\ \hline
\end{tabular}%
\end{equation*}
\end{theorem}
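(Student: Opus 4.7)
My plan is to apply the analogue Skjelbred-Sund procedure (Theorem \ref{h2}) systematically, using as input the five nilpotent Jordan algebras of dimension at most $3$ over $\mathbb{R}$ obtained in the preceding subsections, namely $J_{1,1}$, $J_{2,1}$, $J_{2,2}=J_{3,2}/\langle c\rangle$ representatives, and the three-dimensional algebras $J_{3,1},J_{3,2},J_{3,3}^{\alpha=\pm 1},J_{3,4}$. Since every finite-dimensional nilpotent Jordan algebra either has a central component (and is then a direct sum of a lower-dimensional nilpotent Jordan algebra with $J_{1,1}$) or is a descendant of a smaller nilpotent Jordan algebra, it suffices to enumerate the two sources separately.

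First I would handle the algebras with central components: taking direct sums $J_{3,i}\oplus J_{1,1}$ for $i=1,2,3^{\pm 1},4$ over $\mathbb{R}$ yields exactly $5$ non-isomorphic four-dimensional nilpotent Jordan algebras, because the summands $J_{3,i}$ are pairwise non-isomorphic and the central component is uniquely determined as a complement of $\tilde{J}^{2}+Z(\tilde{J})\cap\tilde{J}^{2}$ type invariant. These are the entries $J_{4,1},\ldots,J_{4,4}^{\alpha=\pm 1}$.

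Next, for each lower-dimensional algebra $J$ I would carry out the three computations prescribed by the procedure in Section 5: compute $Z^{2}(J,\mathbb{R})$ by writing the symmetric bilinear form in the basis $\sum_{e_i,e_j}$ and imposing the linearized Jordan identity; compute the coboundary space $\delta C^{1}(J,\mathbb{R})$ from the squares $e_i\circ e_j\in J^{2}$; then obtain $H^{2}(J,\mathbb{R})$ and identify the allowable subspaces $U_{r}(J)$ using $Z(J)$. The automorphism group $\mathrm{Aut}(J)$ is determined either directly or via the Gr\"obner basis algorithm of Section 6, and its action on $G_{r}(H^{2}(J,\mathbb{R}))$ must be quotiented. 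For $r=1$ applied to each of the four three-dimensional algebras, one obtains families of cocycles parametrized by a few scalars $(\alpha_1,\ldots,\alpha_k)$; I would normalize them by exploiting the action of $\mathrm{Aut}(J)$, reducing to a handful of canonical forms. For $r=2$ applied to $J_{2,1}$, one studies orbits on planes in a $3$-dimensional $H^{2}$, using the Pl\"ucker-type coordinates already suggested. The case $r\geq 3$ is empty since $\dim H^{2}(J_{1,1},\mathbb{R})=1$.

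The main obstacle will be the orbit analysis over $\mathbb{R}$ (as opposed to $\mathbb{C}$): Sylvester's law of inertia forces extra branches because $\mathbb{R}^{\ast}/(\mathbb{R}^{\ast})^{2}=\{\pm 1\}$ and because polynomials like $1+\alpha_{5}^{2}$ or $1-\alpha_{5}^{2}$ that vanish over $\mathbb{C}$ do not always vanish over $\mathbb{R}$. Concretely, for $J_{3,1}$ a non-degenerate ternary quadratic form has two real signatures up to sign, yielding $J_{4,5}^{\alpha=\pm 1}$; for $J_{3,3}^{\alpha=\pm 1}$ the sign $\alpha$ together with the splitting of $1\pm\alpha_{5}^{2}=0$ accounts for the extra algebras $J_{4,9},J_{4,10},J_{4,11}$; and for $J_{2,1}$ the binary form in $H^{2}$ gives three orbits $J_{4,13}^{\alpha=\pm 1},J_{4,12}$ instead of two. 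Adding the "rigid" extensions from $J_{3,2}$ and $J_{3,4}$ (which yield $J_{4,6}, J_{4,7}, J_{4,12}$) and accounting carefully for the case $\alpha=-1$ in $J_{3,3}$ contributes the remaining algebras.

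Finally I would certify the list by checking that the $17$ algebras are pairwise non-isomorphic using the invariants $(\dim Z(\tilde{J}),\dim \tilde{J}^{2},\dim Z(\tilde{J})\cap \tilde{J}^{2},\text{nilindex},\text{associativity})$ and, where these fail to distinguish (e.g.\ between $J_{4,3}^{+1}$ and $J_{4,3}^{-1}$ or $J_{4,5}^{\pm 1}$), by running the Gr\"obner-basis isomorphism test from Section 6, whose infeasibility reduces to the non-solvability of $\alpha=\delta^{2}\beta$ with $\alpha\beta<0$ over $\mathbb{R}$. Summing $5$ (central component) $+$ $12$ (no central component, namely $J_{4,5}^{\pm 1},J_{4,6},J_{4,7},J_{4,8}^{\pm 1},J_{4,9},J_{4,10},J_{4,11},J_{4,12},J_{4,13}^{\pm 1}$) gives exactly $17$.
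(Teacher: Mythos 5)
Your proposal follows exactly the route the paper takes: split off the algebras with central components (direct sums $J_{3,i}\oplus J_{1,1}$, giving $5$ algebras since there are $5$ pairwise non-isomorphic nilpotent Jordan algebras of dimension $3$ over $\mathbb{R}$), then apply the Skjelbred--Sund analogue (Theorem \ref{h2}) with $r=1$ to each three-dimensional algebra and with $r=2$ to $J_{2,1}$, noting that $r\geq 3$ is vacuous; the branch points you flag --- the two real signature classes of a nondegenerate ternary form for $J_{3,1}$, the reality of the roots of $1+\alpha_{5}^{2}$ versus $1-\alpha_{5}^{2}$ for $J_{3,3}^{\alpha=\pm 1}$, and the three real orbits of planes in $H^{2}(J_{2,1},\mathbb{R})$ --- are precisely where the paper's real classification diverges from the algebraically closed one, and your tally $5+12=17$ agrees with the paper. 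Two caveats. First, as written this is a roadmap rather than a proof: for a classification theorem the proof \emph{is} the explicit computation of $Z^{2}$, $\delta C^{1}$, $\mathrm{Aut}(J)$ and the orbit representatives for each input algebra, and none of these computations are actually performed (you should also record why $r=2$ contributes nothing for $J_{2,2}$, namely $\dim H^{2}(J_{2,2},\mathbb{R})=1$). Second, your source attributions are internally inconsistent: $J_{4,11}$ ($a^{2}=b$, $a\circ b=c$, $a\circ c=d$, $b^{2}=d$) is the unique descendant of $J_{3,4}$, not of $J_{3,3}^{\pm 1}$, and $J_{4,12}$ ($a^{2}=c$, $a\circ b=d$) arises only from the two-dimensional extension of $J_{2,1}$, yet you also assign it to $J_{3,4}$. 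These slips cancel in the final count but would have to be corrected before the argument is written out in full.
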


Also, all nilpotent Jordan algebras of dimension $4$ over $%
\mathbb{R}
$ are associative except $J_{4,6},J_{4,8}^{\alpha =\pm 1},J_{4,9}$ and $%
J_{4,10}.$

\begin{theorem}
Up to isomorphism there exist $12$ commutative nilpotent associative
algebras of dimension $4$ over $%
\mathbb{R}
$ which are isomorphic to one of the following \ pairwise non-isomorphic
commutative nilpotent associative algebras :%
\begin{equation*}
\begin{tabular}{|l|}
\hline
\textbf{Nilpotent Jordan algebras with Centeral component} \\ \hline
$J_{4,1}=J_{3,1}\oplus J_{1,1}.$ $:$All multiplications are zero. \\ \hline
$J_{4,2}=J_{3,2}\oplus J_{1,1}.$ $:$ $a^{2}=b.$ \\ \hline
$J_{4,3}^{\alpha =\pm 1}=J_{3,3}^{\alpha =\pm 1}\oplus J_{1,1}.$ $:$ $%
a^{2}=c $ $,$ $b^{2}=\alpha c$ $.$ $\ \ \ \ \ \ \ \ \left( \alpha =\pm
1\right) $ \\ \hline
$J_{4,4}=J_{3,4}\oplus J_{1,1}.$ $:$ $a^{2}=b$ $,$ $a\circ b=c.$ \\ \hline
\textbf{Nilpotent Jordan algebras without Centeral component} \\ \hline
$J_{4,5}^{\alpha =\pm 1}$ $:$ $a^{2}=d$ $,$ $b^{2}=d$ $,c^{2}=\alpha d$ $.$ $%
\ \ \ \ \ \ \ \ \ \ \ \ \ \ \ \ \ \ \ \ \ \ \ \ \ \ \ \left( \alpha =\pm
1\right) $ \\ \hline
$J_{4,7}$ $:$ $a^{2}=b\text{ },\text{ }a\circ b=d\text{ },\text{ }c^{2}=d.$
\\ \hline
$J_{4,11}$ $:$ $a^{2}=b\text{ },\text{ }a\circ b=c$ $,$ $a\circ c=d$ $,$ $%
b^{2}=d.$ \\ \hline
$J_{4,12}$ $:$ $a^{2}=c\text{ },\text{ }a\circ b=d$ $.$ \\ \hline
$J_{4,13}^{,\alpha =\pm 1.}$ $:$ $a^{2}=c\text{ },\text{ }b^{2}=\alpha c$ $,$
$a\circ b=d$ $.$ $\ \ \ \ \ \ \ \ \ \ \ \ \ \ \ \ \ \ \ \ \ \ \ \ \ \left(
\alpha =\pm 1\right) $ \\ \hline
\end{tabular}%
\end{equation*}
\end{theorem}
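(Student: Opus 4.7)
The plan is to obtain this theorem as a direct corollary of the immediately preceding classification of the $17$ nilpotent Jordan algebras of dimension $4$ over $\mathbb{R}$. Since every commutative nilpotent associative algebra satisfies the Jordan identity, it is automatically a commutative nilpotent Jordan algebra, so the list of commutative nilpotent associative algebras of dimension $4$ over $\mathbb{R}$ is exactly the sublist of those $17$ that satisfy the associativity relation $(x\circ y)\circ z=x\circ(y\circ z)$ on a basis. Hence the whole proof reduces to sorting the $17$ algebras into associative and non-associative ones.

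First, for the algebras with central component $J_{4,1},\,J_{4,2},\,J_{4,3}^{\alpha=\pm 1},\,J_{4,4}$, associativity reduces immediately to associativity of the $\leq 3$-dimensional factor. By the earlier theorem asserting that every nilpotent Jordan algebra of dimension $\leq 3$ is a commutative nilpotent associative algebra, all $5$ of these (counting $\alpha=\pm 1$ separately) are associative. They therefore enter the list.

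Second, for the algebras without central component I would run through the associativity test on basis triples and identify the non-associative ones. The offending algebras are precisely the four flagged in the remark preceding the theorem: in $J_{4,6}$ we have $(a\circ a)\circ c=b\circ c=d$ but $a\circ(a\circ c)=0$; in $J_{4,8}^{\alpha=\pm 1}$ we have $(a\circ b)\circ b=0$ but $a\circ(b\circ b)=\alpha d\neq 0$; in $J_{4,9}$ and $J_{4,10}$ we have $(b\circ b)\circ a=-c\circ a=-d$ while $b\circ(b\circ a)\in\{0,b\circ d\}=\{0\}$. So the five non-associative algebras are $J_{4,6}$, $J_{4,8}^{+1}$, $J_{4,8}^{-1}$, $J_{4,9}$ and $J_{4,10}$. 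For the remaining algebras $J_{4,5}^{\alpha=\pm 1}$, $J_{4,7}$, $J_{4,11}$, $J_{4,12}$, $J_{4,13}^{\alpha=\pm 1}$ a direct check on all ordered triples of basis vectors confirms associativity; the verifications are short because most products involve the center and hence vanish after one more multiplication.

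Counting then gives $17-5=12$ commutative nilpotent associative algebras, namely those listed in the table of the theorem. The only real obstacle is the bookkeeping: I must be careful to count $J_{4,3}^{\alpha=\pm 1}$, $J_{4,5}^{\alpha=\pm 1}$, $J_{4,8}^{\alpha=\pm 1}$ and $J_{4,13}^{\alpha=\pm 1}$ as two non-isomorphic algebras each (since $\mathbb{R}^\ast/(\mathbb{R}^\ast)^2=\{\pm 1\}$), as was already established in the previous subsections; otherwise the case-by-case associativity test is routine.
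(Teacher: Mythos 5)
Your proposal is correct and follows essentially the same route as the paper: the paper derives this theorem from its classification of the $17$ nilpotent Jordan algebras of dimension $4$ over $\mathbb{R}$ together with the remark that all of them are associative except $J_{4,6}$, $J_{4,8}^{\alpha=\pm 1}$, $J_{4,9}$ and $J_{4,10}$, leaving $17-5=12$. Your explicit witnessing triples for non-associativity (e.g.\ $(a\circ a)\circ c\neq a\circ(a\circ c)$ in $J_{4,6}$) are a welcome addition the paper omits, but the argument is the same.
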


\end{document}